\renewcommand{\subsection}{\@startsection{subsection}{1}{0pt}{-3.25ex plus -1ex minus-.2ex}{1.5ex plus.2ex}{\normalfont\it}}
\renewcommand{\section}{\@startsection{section}{1}{\parindent}{3.5ex plus 1ex minus .2ex}{2.3ex plus.2ex}{\sc}}
\renewcommand{\phi}{\varphi}
\renewcommand{\leq}{\leqslant}
\renewcommand{\geq}{\geqslant}
\renewcommand{\epsilon}{\varepsilon}
\renewcommand{\kappa}{\varkappa}
\DeclareMathOperator{\spec}{Spec}
 \DeclareMathOperator{\cyl}{cyl}
 \DeclareMathOperator{\Cyl}{Cyl}
 \DeclareMathOperator{\Map}{Map}
\DeclareMathOperator{\Hom}{Hom} 
 \DeclareMathOperator{\id}{id}
\DeclareMathOperator{\Ho}{Ho}
 \DeclareMathOperator{\kr}{Ker}
 \DeclareMathOperator{\im}{Im}
\DeclareMathOperator{\coker}{Coker} \DeclareMathOperator{\nis}{nis}
 \DeclareMathOperator{\Ar}{Ar}
 \DeclareMathOperator{\Mod}{Mod}
 \DeclareMathOperator{\Ob}{Ob}
\DeclareMathOperator{\supp}{Supp}
\newcommand{\lra}[1]{\bl{#1}\longrightarrow\relax}
\newcommand{\bl}[1]{\buildrel #1\over}
\newcommand{\cc}{\mathcal}
\newcommand{\bb}{\mathbb}
\newcommand{\ps}{\oplus}
\newcommand{\ff}{\mathfrak}
\newcommand{\op}{{\textrm{\rm op}}}
\newcommand{\wh}{\widehat}
\newcommand{\wt}{\widetilde}
\newtheorem{thm}{Theorem}[section]
\newtheorem{prop}[thm]{Proposition}
\newtheorem*{sublem}{Sublemma}
\newtheorem{cor}[thm]{Corollary}
\newtheorem{lem}[thm]{Lemma}
\newtheorem{rem}[thm]{Remark}
\newtheorem{defs}[thm]{Definition}
\begin{document}

\footskip30pt

%\baselineskip=1.5\baselineskip

\title{On the motivic spectral sequence}
\author{Grigory Garkusha}
\address{Department of Mathematics, Swansea University, Singleton Park, Swansea SA2 8PP, UK}
\email{g.garkusha@swansea.ac.uk}

\urladdr{http://math.swansea.ac.uk/staff/gg/}

\author{Ivan Panin}
\address{St. Petersburg Branch of V. A. Steklov Mathematical Institute,
Fontanka 27, 191023 St. Petersburg, Russia}

\address{St. Petersburg State University, Department of Mathematics and Mechanics, Universitetsky prospekt, 28, 198504,
Peterhof, St. Petersburg, Russia}

\email{paniniv@gmail.com}

\thanks{This paper was written during the visit of the second author to
Swansea University supported by EPSRC grant EP/J013064/1. He would
like to thank the University for the kind hospitality}

\keywords{Motivic spectral sequence, algebraic $K$-theory, motivic
cohomology}

\subjclass[2010]{19E08; 55T99}

\begin{abstract}
It is shown that the Grayson tower for $K$-theory of smooth
algebraic varieties is isomorphic to the slice tower of
$S^1$-spectra. We also extend the Grayson tower to bispectra and
show that the Grayson motivic spectral sequence is isomorphic to the
motivic spectral sequence produced by the Voevodsky slice tower for
the motivic $K$-theory spectrum $KGL$. This solves Suslin's problem
for these two spectral sequences in the affirmative.
%To prove this,
%one has to solve the Voevodsky problem for slices of $KGL$. The
%latter problem was first solved by Levine~\cite{Levcon} by using the
%coniveau tower, but our methods are entirely different
%from~\cite{Levcon}.
\end{abstract}

\maketitle

\thispagestyle{empty} \pagestyle{plain}

\newdir{ >}{{}*!/-6pt/@{>}} %this command is to define the arrow of the type \ar@{ >->} (spacing of arrows is needed sometimes)

%\tableofcontents

\section{Introduction}

One of the more significant developments in algebraic $K$-theory in
the 1990-s/early 2000-s was the construction of an algebraic
analogue for the Atiyah--Hirzebruch spectral sequence. It is a
strongly convergent spectral sequence
   $$E_2^{pq}=H_{\cc M}^{p-q,-q}(X,\bb Z)\Longrightarrow K_{-p-q}(X)$$
that relates the motivic cohomology groups of a smooth variety to
its algebraic $K$-groups. The existence of this spectral sequence
was first conjectured by Beilinson~\cite{Bei}. It is also called the
{\it motivic spectral sequence}. Its construction is given in
various forms:

\begin{itemize}
\item[(MSS1)] the Bloch--Lichtenbaum motivic spectral sequence~\cite{BL} for the
spectrum of a field together with the Friedlander--Suslin and Levine
extensions~\cite{FS,Levloc} to the global case for a smooth variety
over a field;

\item[(MSS2)] the Grayson motivic spectral sequence~\cite{Gr,Sus,Wlk,GP};

\item[(MSS3)] the Voevodsky motivic spectral sequence~\cite[p.~171]{DLORV}
produced by the slice filtration of the motivic $K$-theory spectrum
$KGL$~\cite{VoeProbl,VoeAppr}.
\end{itemize}

A problem of Suslin says that the three types of the motivic
spectral sequences agree with each other. In~\cite{Levcon} Levine
solved the Voevodsky problem for the slices of the spectrum
$KGL$~\cite{VoeProbl,VoeAppr} (over a perfect field). As a
consequence he shows that (MSS1) agrees with (MSS3) over perfect
fields.

In this paper we show that over perfect fields the Grayson tower for
$K$-theory of smooth algebraic varieties agrees with the slice tower
of $S^1$-spectra (see Theorem~\ref{grs1tower}). The Grayson tower is
then extended to bispectra. Thanks to this it is proved that (MSS2)
agrees with (MSS3) (over perfect fields), answering the Suslin
problem in the affirmative for these two spectral sequences (see
Theorem~\ref{ochenhorosho}).
%In order to achieve this, we give another solution of the Voevodsky
%problem for the slices of the spectrum $KGL$ (over a perfect field).
%Our methods for the latter problem are entirely different from
%Levine's~\cite{Levcon}.

To conclude the introduction, we make the following remark
recommended by the referee. In~\cite{Pod}, Podkopaev claims that
(MSS1) agrees with (MSS2) by comparing Fried\-lander--Suslin's and
Grayson's towers. He shows in six steps that the entries of both
towers agree but does not show the agreement of the towers maps, on
which the differentials in both spectral sequences depend. It may be
possible to compare the maps in the future.

Throughout the paper we denote by $Sm/k$ the category of smooth
separated schemes of finite type over the base field $k$.

\section{Preliminaries}

In this section we collect basic facts about the $K$-theory
associated with cubes of additive categories. We mostly follow
Grayson~\cite{Gr}.

\subsection{Bivariant additive categories}
\label{Bivariant}

Let $AddCats$ denote the category of small additive categories and
additive functors. Let $AffSm/k$ be the full subcategory of $Sm/k$
whose objects are the affine smooth $k$-schemes. By a {\it bivariant
additive category\/} we mean a functor
   $$\cc A:(Sm/k)^{\op}\times AffSm/k\to AddCats.$$
So to any $X\in Sm/k$ and $Y\in AffSm/k$ we associate an additive
category $\cc A(X,Y)$ which is contravariant in $X$ and covariant in
$Y$.

We also require that there is an action of $AffSm/k$ on $\cc A$
in the following sense. Given $U\in AffSm/k$ there is an additive
functor
   $$\alpha_U:\cc A(X,Y)\to\cc A(X\times U,Y\times U),$$
functorial in $X$ and $Y$, such that for any morphism $f:U\to V$ in
$AffSm/k$ the following square of additive functors is strictly
commutative
   $$\xymatrix{\cc A(X\times V,Y\times V)\ar[rrr]^{\cc A(1_X\times f,1_{Y\times V})}&&&\cc A(X\times U,Y\times V)\\
               \cc A(X,Y)\ar[u]_{\alpha_V}\ar[rrr]^{\alpha_U}&&&\cc A(X\times U,Y\times U).
               \ar[u]_{\cc A(1_{X\times U},1_Y\times f)}}$$
By the functoriality of $\alpha_U$ in $X$ we mean that the following
square of additive functors is strictly commutative for any $Y\in
AffSm/k$ and any morphism $f: X^{\prime} \to X$ in $Sm/k$
$$\xymatrix{\cc A(X\times U,Y\times U)\ar[rrr]^{\cc A(f\times 1_U,1_{Y\times U})}&&&\cc A(X^{\prime}\times U,Y\times U)\\
               \cc A(X,Y)\ar[u]_{\alpha_U}\ar[rrr]^{\cc A(f,1_Y)}&&&\cc A(X^{\prime},Y).
               \ar[u]_{\alpha_U}}$$
By the functoriality of $\alpha_U$ in $Y$ we mean that the following
square of additive functors is strictly commutative for any $X \in
Sm/k$ and any morphism $g: Y \to Y^{\prime}$ in $AffSm/k$
$$\xymatrix{\cc A(X\times U,Y\times U)\ar[rrr]^{\cc A(1_{X\times U},g\times 1_U)}&&&\cc A(X\times U,Y^{\prime}\times U)\\
               \cc A(X,Y)\ar[u]_{\alpha_U}\ar[rrr]^{\cc A(1_X,g)}&&&\cc A(X,Y^{\prime}).
               \ar[u]_{\alpha_U}}$$

Below we shall associate an explicitly constructed bispectrum to any
bivariant additive category. For this we need to collect some facts
about the algebraic $K$-theory of additive categories.

\subsection{$K$-theory for cubes of additive categories}

We let $Ord$ denote the category of finite nonempty ordered sets.
For $A\in Ord$ we define a category $Sub(A)$ whose objects are the
pairs $(i,j)$ with $i\leq j\in A$, and where there is an (unique)
arrow $(i',j')\to(i,j)$ exactly when $i'\leq i\leq j\leq j'$. Given
an additive category $\cc M$, we say that a functor $M:Sub(A)\to\cc
M$ is {\it additive\/} if $M(i,i)=0$ for all $i\in A$, and for all
$i\leq j\leq k\in A$ the map $M(i,k)\to M(i,j)\oplus M(j,k)$ is an
isomorphism. Here 0 denotes a previously chosen zero object of $\cc
M$. The set of such additive functors is denoted by $Add(Sub(A),\cc
M)$. Given ordered sets $A_1,\ldots,A_n$, we let
$Add(Sub(A_1)\times\cdots\times Sub(A_n),\cc M)$ denote the set of
multi-additive functors, i.e., functors that are additive in each
variable.

The Grayson simplicial set $S^\oplus\cc M$~\cite{Gr2,Gr} is defined
as
   $$(S^\oplus\cc M)(A)=Add(Sub(A),\cc M).$$
An $n$-simplex $M\in S_n^\oplus\cc M$ may be thought of as a
compatible collection of direct sum diagrams $M(i,j)\cong
M(i,i+1)\oplus\cdots\oplus M(j-1,j)$. There is a natural map
$S^\oplus\cc M\to S\cc M$ (see~\cite[p.~147]{Gr}) which converts
each direct sum diagram $M(i,k)\cong M(i,j)\oplus M(j,k)$ into the
short exact sequence $0\to M(i,j)\to M(i,k)\to M(j,k)\to 0$. Here
$S\cc M$ stands for the Waldhausen $S$-construction~\cite{Wal}.

We follow the same constructions as in~\cite[Section~8.7]{Rog} to
define Grayson's symmetric spectrum $K^{Gr}(\cc M)$. Given a
positive integer $n$, one can define the $n$-fold multisimplicial
additive category $S^{\oplus,n}\cc M:=S^{\oplus}\bl n\ldots
S^{\oplus}\cc M$. The $n$th space of Grayson's $K$-theory spectrum
is given by\label{KGr}
   $$K^{Gr}(\cc M)_n=|\Ob(S^{\oplus,n}\cc M)|,$$
where the right hand side is the diagonal of the $n$-fold
multisimplicial set $\Ob (S^{\oplus,n}\cc M)$. The $n$th symmetric
group $\Sigma_n$ acts on $K^{Gr}(\cc M)_n$ by permuting the order of
the $S^{\oplus}$-constructions. Each structure map $\sigma$ is the
composite
   $$|\Ob(S^{\oplus,n}\cc M)|\wedge S^1\cong|\Ob(S^{\oplus,n}S^{\oplus}\cc M)|^{(1)}\subset|\Ob(S^{\oplus,n}S^{\oplus}\cc M)|\cong|\Ob(S^{\oplus,n+1}\cc M)|,$$
where the superscript ${}^{(1)}$ stands for the 1-skeleton with
respect to the last simplicial direction. The $k$-fold iterated
structure map $\sigma^k$ is then defined as the composite
   $$|\Ob(S^{\oplus,n}\cc M)|\wedge S^k\cong|\Ob(S^{\oplus,n}S^{\oplus}\bl k\ldots S^{\oplus}\cc M)|^{(1,\ldots,1)}\subset
     |\Ob(S^{\oplus,n}S^{\oplus}\bl k\ldots S^{\oplus}\cc M)|\cong|\Ob(S^{\oplus,n+k}\cc M)|,$$
where the superscript ${}^{(1,...,1)}$ indicates the
multi-1-skeleton with respect to the $k$ last simplicial directions.
This map is plainly $(\Sigma_n\times\Sigma_k)$-equivariant. With
these definitions $K^{Gr}(\cc M)$ becomes a (semistable) symmetric
spectrum. If $\cc M$ happens to be a multisimplicial additive
category, then we define its Grayson $K$-theory symmetric spectrum
$K^{Gr}(\cc M)$ by taking diagonals $K^{Gr}(\cc
M)_n:=|\Ob(S^{\oplus,n}\cc M)|$ of the multisimplicial sets
$\Ob(S^{\oplus,n}\cc M)$, $n\geq 0$.

In~\cite[\S4]{Gr2} is presented a construction called $C$ which can
be applied to a cube of additive categories to convert it into a
multisimplicial additive category, the $K$-theory of which serves as
the iterated cofiber space/spectrum of the corresponding cube of
$K$-theory spaces/spectra. We start with preparations.

We let $[1]$ denote the ordered set $\{0<1\}$ regarded as a
category, and we use $\epsilon$ as notation for an object of $[1]$.
By an $n$-dimensional cube in a category $\cc C$ we shall mean a
functor from $[1]^n$ to $\cc C$. An object $C$ in $\cc C$ gives a
0-dimensional cube denoted by $[C]$, and an arrow $C\to C'$ in $\cc
C$ gives a 1-dimensional cube denoted by $[C\to C']$. If the
category $\cc C$ has products, we may define an external product of
cubes as follows. Given an $n$-dimensional cube $X$ and an
$n'$-dimensional cube $Y$ in $\cc C$, we let $X\boxtimes Y$ denote
the $n+n'$-dimensional cube defined by $(X\boxtimes
Y)(\epsilon_1,\ldots,\epsilon_{n+n'})=X(\epsilon_1,\ldots,\epsilon_n)\times
Y(\epsilon_{n+1},\ldots,\epsilon_{n+n'})$. Let $\bb G_m^{\wedge n}$
denote the external product of $n$ copies of $[1\to \bb G_m]$ in
$Sm/k$. For example, $\bb G^{\wedge 2}_m$ is the square of schemes
   $$\xymatrix{\spec k\ar[r]\ar[d]&\bb G_m\ar[d]\\
               \bb G_m\ar[r]&\bb G_m\times\bb G_m.}$$

Let $L$ be a symbol, and consider $\{L\}$ to be an ordered set.
Given an ordered set $A$, by $\{L\}A$ we mean the concatenation
ordered set with $L$ declared to be less than every element of $A$.
Given an $n$-dimensional cube of additive categories $\cc M$, we
define an $n$-fold multisimplicial additive category $C^\oplus\cc M$
as a functor from $(Ord^n)^{\op}$ to the category of additive
categories by letting $C^\oplus\cc M(A_1,\ldots,A_n)$ be the
additive category whose objects are the multi-additive natural
transformations (we follow here the terminology of~\cite{Gr})
   \begin{equation}\label{cube}
    Add([Sub(A_1)\to Sub(\{L\}A_1)]\boxtimes\cdots\boxtimes[Sub(A_n)\to Sub(\{L\}A_n)],\cc M).
   \end{equation}
More precisely, every object in~\eqref{cube} maps each vertex of the
cube of the domain to the corresponding vertex of the cube $\cc M$
by means of a multi-additive functor. If we regard each edge of the
cube of the domain as a functor between categories, then one has a
commutative diagram of functors, in which one pair of parallel
arrows is this edge and the corresponding edge (which is an additive
functor) of the cube $\cc M$. When $n=0$, we may identify
$C^\oplus\cc M$ with $\cc M$. We define $S^\oplus\cc M$ to be
$S^\oplus C^\oplus\cc M$, the result of applying the
$S^\oplus$-construction of Grayson degreewise. It is an $n+1$-fold
multisimplicial set (see~\cite{Gr} for details).

If we extend the following lemma to Grayson's $K$-theory spectra in
the obvious way, then we get that Grayson's $K$-theory
$K^{Gr}(C^\oplus\cc M)$ of $C^\oplus\cc M$ serves as the iterated
cofiber space/spectrum of the corresponding cube of Grayson's
$K$-theory spaces/spectra.

\begin{lem}(\cite[4.3]{Gr})\label{grfiltr}
Suppose we are given an additive map $\cc M'\to\cc M$ of
$n$-dimensional cubes of additive categories. Let $[\cc M'\to\cc M]$
denote the corresponding $n+1$-dimensional cube of additive
categories.
\begin{itemize}
\item[(a)] There is a fibration sequence
   $$S^\oplus[0\to\cc M]\to S^\oplus[\cc M'\to\cc M]\to S^\oplus[\cc M'\to 0].$$
\item[(b)] The space $S^\oplus[\cc M\bl 1\to\cc M]$ is contractible.
\item[(c)] $S^\oplus[0\to\cc M]$ is homotopy equivalent to $S^\oplus\cc M$.
\item[(d)] $S^\oplus[\cc M\to 0]=S^\oplus S^\oplus\cc M$ is is a delooping of $S^\oplus\cc M$.
\item[(f)] There is a fibration sequence $S^\oplus\cc M'\to S^\oplus\cc M\to S^\oplus[\cc M'\to\cc
M]$.
\end{itemize}
\end{lem}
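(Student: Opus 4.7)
The plan is to combine three combinatorial identifications for $C^\oplus$ applied to ``degenerate'' cubes with one application of additivity for $S^\oplus$. The key observation is that $[Sub(A)\to Sub(\{L\}A)]$ is a $1$-cube of indexing categories in which the source sits as a retract of the target by deletion of the adjoined element $L$; this gives the multi-additive transformations out of the cubical product in \eqref{cube} a natural splitting along the corresponding coordinate, and everything else flows from iterating this observation.

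I would first handle the structural identifications (b), (c), (d). For (b), when the arrow is the identity, a multi-additive transformation out of $[Sub(A)\to Sub(\{L\}A)]\boxtimes(\cdots)$ into $[\cc M\bl 1\to\cc M]$ is forced by compatibility and additivity to come from a transformation into the single category $\cc M$ along the identity, and the retraction $Sub(\{L\}A)\to Sub(A)$ supplies an explicit simplicial null-homotopy. For (c), the $Sub(A)$-face being $0$ kills the corresponding additive summand, so transformations factor through the quotient of indexing categories, which one identifies with the indexing for $S^\oplus\cc M$; applying $S^\oplus$ produces the claimed equivalence. For (d), the $\{L\}$-end being $0$ instead produces an extra level of direct-sum decomposition along the adjoined coordinate, identifying $S^\oplus[\cc M\to 0]$ with $S^\oplus S^\oplus\cc M$; that this is a delooping of $S^\oplus\cc M$ is a standard property of Grayson's $S^\oplus$-construction~\cite{Gr2}.

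For (a), I would decompose the $(n+1)$-cube $[\cc M'\to\cc M]$ along the adjoined coordinate into the split short exact sequence of cubes
\[
   [0\to\cc M] \longrightarrow [\cc M'\to\cc M] \longrightarrow [\cc M'\to 0],
\]
and invoke additivity for $S^\oplus$ to promote this cofibration of multisimplicial sets to a fibration sequence after applying $S^\oplus$. Part (f) then follows by combining (a), (b), and (c): map the contractible cube $[\cc M'\bl 1\to\cc M']$ into $[\cc M'\to\cc M]$, use (b) to see that $S^\oplus$ of the former is contractible, and identify the remaining terms in the resulting fibration sequence via (c) and (a).

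The main obstacle is justifying that split short exact sequences of cubes of additive categories give rise to honest fibration sequences of $S^\oplus$-spectra rather than merely cofibrations of multisimplicial sets. This rests on Grayson's extension of Waldhausen additivity to the multisimplicial $S^\oplus$-construction~\cite{Gr2}, applied coordinatewise; it is the one step that cannot be reduced to a purely combinatorial manipulation of the $C^\oplus$-functor and must be invoked as a black box from Grayson.
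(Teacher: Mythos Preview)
The paper does not supply its own proof of this lemma; it is stated with attribution to Grayson~\cite{Gr} and left at that. Your sketch is a reasonable reconstruction of the argument one finds in Grayson's paper, and you have correctly isolated the one non-formal ingredient, namely the additivity theorem for the $S^\oplus$-construction, which converts the evident levelwise direct-sum decompositions into genuine fibration sequences. One minor quibble: calling $[0\to\cc M]\to[\cc M'\to\cc M]\to[\cc M'\to 0]$ a ``split short exact sequence of cubes'' is slightly loose, since these are not exact sequences of additive categories in any literal sense; what is actually happening is that after applying $C^\oplus$ one obtains, in each multidegree, an additive category that splits naturally along the adjoined coordinate, and it is \emph{this} splitting to which additivity is applied. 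But this is a matter of phrasing rather than substance, and your overall logical flow---combinatorial identifications for (b), (c), (d), additivity for (a), then (f) from (a)+(b)+(c)---matches Grayson's.
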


Let $\cc A:(Sm/k)^{\op}\times AffSm/k\to AddCats$ be a bivariant
additive category. Given $X\in Sm/k$, $Y\in AffSm/k$ and $n>0$, the
cube of schemes $\bb G_m^{\wedge n}$ gives rise to a cube of
additive categories $\cc A(X,Y\times\bb G_m^{\wedge n})$. Its
vertices are $\cc A(X,Y\times\bb G_m^{\times\ell})$, $0\leq\ell\leq
n$. The edges of the cube are given by the natural additive functors
$i_s:\cc A(X,Y\times\bb G_m^{\times(\ell-1)})\to\cc A(X,Y\times\bb
G_m^{\times\ell})$ induced by the embeddings $i_s:\bb
G_m^{\times(\ell-1)}\to\bb G_m^{\times\ell}$ of the form
   $$(x_1,\ldots,x_{\ell-1})\longmapsto(x_1,\ldots,1,\ldots,x_{\ell-1}),$$
where 1 is the $s$th coordinate.

Thus we obtain a cube of bivariant additive categories $\cc
A\langle\bb G_m^{\wedge n}\rangle$. Grayson's $K$-theory of $\cc
A\langle\bb G_m^{\wedge n}\rangle$ produces a functor
   $$K^{Gr}(C^\oplus\cc A\langle\bb G_m^{\wedge n}\rangle):(Sm/k)^{\op}\times AffSm/k\to Sp^\Sigma,
     \quad(X,Y)\mapsto K^{Gr}(C^\oplus\cc A(X,Y\times\bb G_m^{\wedge n})).$$
Here $Sp^\Sigma$ stands for the category of symmetric spectra in the
sense of~\cite{HSS}. It is directly verified that
   \begin{equation}\label{ilin}
    K^{Gr}_0(C^\oplus\cc A(X,Y\times\bb G_m^{\wedge n}))=K^{Gr}_0(\cc A(X,Y\times\bb G_m^{\times n}))/
     \sum^n_{s=1}(i_s)_*(K^{Gr}_0(\cc A(X,Y\times\bb G_m^{\times n-1}))).
   \end{equation}
Indeed, the case $n=1$ follows from Lemma~\ref{grfiltr}(f) and the
general case is checked by induction.

\section{The category of bispectra}

In this paper we work with the category $Pre^\Sigma(Sm/k)$ of
presheaves of symmetric spectra. It has three model category
structures, each of which we discuss separately.

\begin{defs}{\rm
A morphism $f$ in $Pre^\Sigma(Sm/k)$ is a {\it stable weak
equivalence\/} (respectively {\it stable projective fibration}) if
$f(X)$ is a stable weak equivalence (respectively stable projective
fibration) in $Sp^\Sigma$ for all $X\in Sm/k$. It is a {\it stable
projective cofibration\/} if $f$ has the left lifting property with
respect to all stable projective acyclic fibrations.

}\end{defs}

Recall that the Nisnevich topology is generated by elementary
distinguished squares, i.e. pullback squares
   \begin{equation}\label{squareQ}
    \xymatrix{\ar@{}[dr] |{\textrm{$Q$}}U'\ar[r]\ar[d]&X'\ar[d]^\phi\\
              U\ar[r]_\psi&X}
   \end{equation}
where $\phi$ is etale, $\psi$ is an open embedding and
$\phi^{-1}(X\setminus U)\to(X\setminus U)$ is an isomorphism of
schemes (with the reduced structure).

\begin{defs}{\rm
(1) A stably fibrant presheaf $M\in Pre^\Sigma(Sm/k)$ is {\it
Nisnevich local\/} if for each elementary distinguished square $Q$
the square of symmetric spectra $M(Q)$ is a homotopy pullback.

(2) A Nisnevich local presheaf $M\in Pre^\Sigma(Sm/k)$ is {\it $\bb
A^1$-local\/} if the natural map
   $$M(X)\to M(X\times\bb A^1)$$
is a stable equivalence of symmetric spectra for all $X\in Sm/k$.

(3) A map $f:A\to B$ in $Pre^\Sigma(Sm/k)$ is a {\it local weak
equivalence\/} (respectively {\it motivic equivalence}) if the map
of spaces
   $$f^*:\Map(B,M)\to\Map(A,M)$$
is a weak equivalence for any Nisnevich local (respectively $\bb
A^1$-local) presheaf $M$.

(4) The {\it Nisnevich local model category\/} (respectively the
{\it motivic model category}) on pre\-sheaves of symmetric spectra,
denoted by $Pre_{nis}^\Sigma(Sm/k)$ (respectively
$Pre_{mot}^\Sigma(Sm/k)$), is determined by stable projective
cofibrations and local weak equivalences (respectively motivic
equivalences). The fibrations are defined by the corresponding
lifting property. The homotopy category of $Pre_{mot}^\Sigma(Sm/k)$
will be denoted by $SH_{S^1}(k)$.

}\end{defs}

We define the mapping cylinder $\cyl(f)$ of a map $f:A\to B$ between
cofibrant objects in a simplicial model category $\cc M$. Let
$A\otimes\varDelta^1$ denote the standard cylinder object for $A$.
One has a commutative diagram
   $$\xymatrix{A\sqcup A\ar[r]^(.55)\nabla\ar[d]_{i=i_0\sqcup i_1} &A\\
               A\otimes\varDelta^1\ar[ur]_\sigma}$$
in which $i$ is a cofibration and $\sigma$ is a weak equivalence.
Each $i_\epsilon$ must be a trivial cofibration.

Form the pushout diagram
   $$\xymatrix{
      A\ar[r]^f\ar[d]_{i_0}&B\ar[d]^{i_{0*}}\\
      A\otimes\varDelta^1\ar[r]^{f_*} &\Cyl(f).
     }$$
Then $(f\sigma)\circ i_0=f$, and so there is a unique map
$q:\Cyl(f)\to B$ such that $qf_*=f\sigma$ and $qi_{0*}=1_B$. Put
$\cyl(f)=f_*i_1$; then $f=q\circ\cyl(f)$.

If $A, B$ are cofibrant in $\cc M$, then so is $\Cyl(f)$. Observe
also that $q$ is a weak equivalence. The map $\cyl(f)$ is a
cofibration, since the diagram
   $$\xymatrix{
      A\sqcup A\ar[r]^{f\sqcup 1_A}\ar[d]_{i_0\sqcup i_1}&B\sqcup A\ar[d]^{i_{0*}\sqcup\cyl(f)}\\
      A\otimes\varDelta^1\ar[r]^{f_*} &\Cyl(f).
     }$$
is a pushout.

Consider the category $Pre(Sm/k)$ of presheaves of pointed
simplicial sets. We can define the projective model category
structure on it, where the weak equivalences and fibrations are
defined schemewise. Let $\iota:pt=\spec k\to\bb G_m$ be the
embedding $\iota(pt)=1\in\bb G_m$. The mapping cylinder yields a
factorization of the induced map
   $$\spec k_+\hookrightarrow\Cyl(\iota)\lra{\simeq}(\bb G_{m})_+$$
into a cofibration and a simplicial homotopy equivalence in
$Pre(Sm/k)$. Let $\bb G$ denote the cofibrant pointed presheaf
$\Cyl(\iota)/\spec k_+$.

Let $Pre^{\Sigma,\bb G}(Sm/k)$ denote the category of $\bb
G$-spectra in $Pre^\Sigma(Sm/k)$. Its objects are the sequences
$(X_0,X_1,\ldots)$ of presheaves of symmetric spectra $X_i$-s
together with bonding maps $X_i\to\Omega_{\bb G}X_{i+1}$, where
$\Omega_{\bb G}X_{i+1}=\underline{\Hom}(\bb G,X_{i+1})$. Morphisms
are defined levelwise and must be consistent with bonding maps. This
category will also be referred as the {\it category of $(S^1,\bb
G)$-bispectra\/} or just {\it bispectra}. We define the {\it stable
projective model structure\/} on $Pre^{\Sigma,\bb G}(Sm/k)$
(respectively the {\it Nisnevich local and motivic model
structure\/}) as the stable model category of $\bb G$-spectra in the
sense of Hovey~\cite{Hov} associated with the model category
$Pre^{\Sigma}(Sm/k)$ (respectively $Pre^{\Sigma}_{nis}(Sm/k)$ and
$Pre^{\Sigma}_{mot}(Sm/k)$). Using Hovey's notation~\cite{Hov}, it
is the model category $Sp^{\bb N}(Pre^{\Sigma}(Sm/k),\bb G\otimes-)$
(respectively $Sp^{\bb N}(Pre^{\Sigma}_{nis}(Sm/k),\bb G\otimes-)$
and $Sp^{\bb N}(Pre^{\Sigma}_{mot}(Sm/k),\bb G\otimes-)$). In what
follows we shall denote the homotopy category of $Sp^{\bb
N}(Pre^{\Sigma}_{mot}(Sm/k),\bb G\otimes-)$ by $SH(k)$. It is one of
equivalent definitions of the Voevodsky stable motivic category of
the field $k$~\cite{VoeICM}.

The main bispectrum we shall work with is produced by a bivariant
additive category
   $$\cc A:(Sm/k)^{\op}\times AffSm/k\to AddCats.$$
Namely, let
   $$A_Y=(A_Y(0),A_Y(1),A_Y(2),\ldots)$$
be the sequence of presheaves of symmetric spectra
   $$A_Y(n)=K^{Gr}(C^\oplus\cc A(-,Y\times\bb G_m^{\wedge n})),\quad n\geq 0.$$
We want to construct bonding maps
   \begin{equation*}
    a_n:A_Y(n)\to\Omega_{\bb G}A_{Y}(n+1).
   \end{equation*}
Each $a_n$ is uniquely determined by a map
   \begin{equation}\label{hook}
    \beta:K^{Gr}(C^\oplus\cc A(-,Y\times\bb G_m^{\wedge n}))\to K^{Gr}(C^\oplus\cc A(-\times\bb G_m,Y\times\bb G_m^{\wedge n+1}))
   \end{equation}
and a homotopy
   \begin{equation}\label{sorok}
    h:K^{Gr}(C^\oplus\cc A(-,Y\times\bb G_m^{\wedge n}))\to K^{Gr}(C^\oplus\cc A(-\times\spec k,Y\times\bb G_m^{\wedge n+1}))^I
   \end{equation}
such that $d_0h=\iota^*\beta$ and $d_1h$ factors trough the zero
object levelwise.

We first construct the maps $\beta$ and $h$ for $n=0$. By definition
of a bivariant additive category, there is an additive functor
   $$\alpha_{\bb G_m}:\cc A(X,Y)\to\cc A(X\times\bb G_m,Y\times\bb G_m),\quad X\in Sm/k.$$
The map $\beta$ is induced by the composition
   $$\cc A(X,Y)\xrightarrow{\alpha_{\bb G_m}}\cc A(X\times\bb G_m,Y\times\bb G_m)\bl p\to C^\oplus\cc A(X\times\bb G_m,Y\times\bb G_m^{\wedge 1}),$$
where $p$ is a natural simplicial functor of simplicial categories
(we consider $\cc A(X\times\bb G_m,Y\times\bb G_m)$ as a simplicial
category in a trivial way).

One has a commutative square of additive functors
   $$\xymatrix{\cc A(X\times\bb G_m,Y\times\bb G_m)\ar[rrr]^{\cc A(1_X\times\iota,1_{Y\times\bb G_m})}&&&\cc A(X\times\spec k,Y\times\bb G_m)\\
               \cc A(X,Y)\ar[u]_{\alpha_{\bb G_m}}\ar[rrr]^{\alpha_{\spec k}}
               &&&\cc A(X\times\spec k,Y\times\spec k).\ar[u]_{\cc A(1_{X\times\spec k},1_{\spec k}\times\iota)}}$$
On the other hand, there is a commutative diagram of simplicial
additive categories\footnotesize
    $$\xymatrix{\cc A(X\times\spec k,Y\times\bb G_m)\ar[rr]^p&&C^\oplus\cc A(X\times\spec k,Y\times\bb G_m^{\wedge 1})\\
                \cc A(X\times\spec k,Y\times\spec k)\ar[u]^{\iota_*}\ar[rr]^(.35){p'}
                &&C^\oplus[\cc A(X\times\spec k,Y\times\spec k)\xrightarrow{\id}\cc A(X\times\spec k,Y\times\spec k)]\ar[u]_{\iota_*}}$$
\normalsize Recall that the {\it path space\/} $PX$ of a simplicial
object $X:\Delta^{\op}\to\cc D$ in a category $\cc D$ is defined as
the composition of $X$ with the shift functor $P:\Delta\to\Delta$
that takes $[n]$ to $[n+1]$ (by mapping $i$ to $i+1$). The right
lower corner of the diagram can be identified with the simplicial
path space $P(S^\oplus\cc A(X\times\spec k,Y\times\spec k))$.
By~\cite[1.5.1]{Wal} there is a canonical contraction of this
simplicial set into the set of its zero simplices regarded as a
constant simplicial set. Since $P(S^\oplus\cc A(X\times\spec
k,Y\times\spec k))$ has only one zero simplex, it follows that there
is a canonical simplicial homotopy
   $$H:P(S^\oplus\cc A(X\times\spec k,Y\times\spec k))\to P(S^\oplus\cc A(X\times\spec k,Y\times\spec k))^I$$
such that $d_0H=1$ and $d_1H=const$.

Now the map $h$~\eqref{sorok} is induced by the composite map
   $$\xymatrix{&&(C^\ps\cc A(X\times\spec k,Y\times\bb G_m^{\wedge 1}))^I\\
               &P(S^\oplus\cc A(X\times\spec k,Y\times\spec k))\ar[r]^H&
               P(S^\oplus\cc A(X\times\spec k,Y\times\spec k))^I\ar[u]^{\iota_*^I}\\
               \cc A(X,Y)\ar[r]^(.32){\alpha_{\spec k}}&\cc A(X\times\spec k,Y\times\spec k)\ar[u]^{p'}}$$
Since $d_1\circ\iota_*^I\circ H=\iota_*\circ d_1\circ H=const$, then
$d_1h$ factors through the zero object. On the other hand,
   $$d_0\circ\iota_*^I\circ H\circ p'\circ\alpha_{\spec k}=\iota_*\circ d_0\circ H\circ p'\circ\alpha_{\spec k}=\iota_*\circ p'\circ\alpha_{\spec k}
     =p\circ\iota_*\circ\alpha_{\spec k}=p\circ\iota^*\circ\alpha_{\bb G_m}.$$
Moreover, $p\circ\iota^*\circ\alpha_{\bb G_m}=\iota^*\circ
p\circ\alpha_{\bb G_m}$, and therefore $d_0h=\iota^*\beta$. The
bonding map $a_0:A_Y(0)\to\Omega_{\bb G}A_Y(1)$ is now constructed.
The definition of each $a_n:A_Y(n)\to\Omega_{\bb G}A_Y(n+1)$ is
similar to that of $a_0$. The only difference is that we replace the
bivariant additive category $\cc A(X,Y)$ by the multisimplicial
bivariant additive category $C^\oplus\cc A(X,Y\times\bb G_m^{\wedge
n})$.

Given an abelian monoid $(A,+)$, denote by $EM(A)$ its
Eilenberg--Mac~Lane symmetric spectrum in the sense
of~\cite[Appendix~A]{GP}. It is defined in terms of the
$\sigma$-construction and is similar to the definition of the
Waldhausen $K$-theory spectrum that uses the
$S$-construction~\cite{Wal}. By definition, $\sigma A$ is a
simplicial set whose $n$-simplices are the functions
   $$a:\Ob\Ar[n]\to A,\quad (i,j)\mapsto a(i,j)=a_{i,j},$$
having the property that for every $j$, $a_{j,j}=0$ and
$a_{i,k}=a_{i,j}+a_{j,k}$ whenever $i\leq j\leq k$.

Given an $n$-dimensional cube of abelian monoids $M$, we define an
$n$-fold multisimplicial abelian monoid $C^\oplus M$ similar to
formula~\eqref{cube}. The only difference is that we consider
functions from objects of the corresponding posets ignoring poset
arrows. It is worth to mention that Lemma~\ref{grfiltr} is valid for
the $\sigma$-construction of cubes of abelian monoids. For this one
uses the additivity theorem for the $\sigma$-construction, just as
in~\cite[section~1.5]{Wal}. Applying the $\sigma$-construction to
the multisimplicial abelian monoid $C^\oplus M$, one gets a
symmetric spectrum $EM(C^\oplus M)$. It serves as the iterated
cofiber spectrum of the cube of Eilenberg--Mac~Lane's spectra
$EM(M)$.

Every $n$-dimensional cube of additive categories $\cc M$ gives rise
to an $n$-fold cube of abelian groups $K_0^{Gr}(\cc M)$. There is a
natural map $S^\oplus\cc M\to\sigma K_0^{Gr}(\cc M)$ induced by the
map sending an object of an additive category to its isomorphism
class in the Grothendieck group. This map induces a map of symmetric
spectra
   $$\tau:K^{Gr}(C^\oplus\cc M)\to EM(C^\oplus K_0^{Gr}(\cc M)).$$

For each $n\geq 0$ we set
   $$A_{0,Y}(n)=EM(C^\oplus K^{Gr}_0(\cc A(-,Y\times\bb G_m^{\wedge n}))).$$
Note that $\pi_0(A_{0,Y}(n))$ is computed by formula~\eqref{ilin}
and the other homotopy groups are zero. We have that the sequence of
symmetric spectra
   $$A_{0,Y}=(A_{0,Y}(0),A_{0,Y}(1),A_{0,Y}(2),\ldots)$$
forms a bispectrum, in which the bonding maps are defined like those
for the bispectrum $A_Y$. Moreover, there is a canonical map of
bispectra
   $$\tau:A_Y\to A_{0,Y}.$$
This map consists of the collection of canonical maps of symmetric
spectra
   $$\tau_n:A_Y(n)\to A_{0,Y}(n),\quad n\geq 0,$$
defined as above.

\section{The Grayson tower of bispectra}
\label{GraysonTower}

In this section we work in the framework of simplicial additive
categories $\cc M$ over contractible simplicial rings $R$. Given
such a pair $(\cc M,R)$, Grayson~\cite{Gr} constructs a tower of
spaces which is also referred to as the {\it Grayson tower}. Each
space of the Grayson tower is defined as a $K$-theory space of some
``multisimplicial additive category with commuting automorphisms"
associated with $(\cc M,R)$. In practice the Grayson tower gives
rise to a motivic spectral sequence (see~\cite{Gr,Wlk,Sus,GP}). The
Grayson tower for $(\cc M,R)$ can be extended to symmetric
spectra~\cite{GP}. We shall mostly adhere to~\cite{GP} in this
section.

In our setting the contractible ring $R$ is
   $$k[\Delta]:d\mapsto k[\Delta^d]=k[t_0,t_1,\ldots,t_d]/(t_0+t_1+\cdots+t_d-1).$$
In what follows we require
   $$d\mapsto\cc A(X\times\Delta^d,Y)$$
to be a $k[\Delta]$-linear additive category, where $\cc
A:(Sm/k)^{\op}\times AffSm/k\to AddCats$ is a bivariant additive category.

In order to make Grayson's machinery applicable to our setting,
throughout this section we work with a bivariant additive category
   $$\cc A:(Sm/k)^{\op}\times AffSm/k\to AddCats$$
satisfying the following property:

\begin{itemize}
\item[$(\ff{Aut})$] for every $X\in Sm/k,Y\in AffSm/k$ and $n>0$, the additive category $\cc A(X,Y\times\bb G_m^{\times n})$
can be identified with the additive category $\cc A(X,Y)(\bb
G_m^{\times n})$ whose objects are the tuples
$(P,\theta_1,\ldots,\theta_n)$, where $P\in\cc A(X,Y)$ and
$(\theta_1,\ldots,\theta_n)$ are commuting automorphisms of $P$.
More precisely, there is an isomorphism of additive categories (not
only an equivalence of categories)
   $$\rho_{X,Y,n}:\cc A(X,Y\times\bb G_m^{\times n})\to\cc A(X,Y)(\bb G_m^{\times n})$$
such that the diagram of functors
   $$\xymatrix{\cc A(X,Y\times\bb G_m^{\times n-1})\ar[d]_{i_s}\ar[rr]^{\rho_{X,Y,n-1}}&&\cc A(X,Y)(\bb G_m^{\times n-1})\ar[d]^{j_s}\\
               \cc A(X,Y\times\bb G_m^{\times n})\ar[rr]^{\rho_{X,Y,n}}&&\cc A(X,Y)(\bb G_m^{\times n})}$$
is commutative. Here $i_s$ (respectively $j_s$) stands for the
functor induced by the map $(x_1,\ldots,x_{n-1})\in\bb G_m^{\times
n-1}\mapsto(x_1,\ldots,1,\ldots,x_{n-1})\in\bb G_m^{\times n}$ with
1 the $s$th coordinate (respectively
$(\theta_1,\ldots,\theta_{n-1})\mapsto(\theta_1,\ldots,1,\ldots,\theta_{n-1})$).
We also require each identification $\rho_{X,Y,n}$ to be functorial
in both arguments.
\end{itemize}

We can form a cube of additive categories $\cc A(X,Y)(\bb
G_m^{\wedge n})$ whose vertices are $\cc A(X,Y)(\bb G_m^{\times
\ell})$, $\ell\leq n$, and edges are given by the functors $j_s$.
The $(\ff{Aut})$-property implies the cubes $\cc A(X,Y)(\bb
G_m^{\wedge n})$ and $\cc A(X,Y\times\bb G_m^{\wedge n})$ are
isomorphic.

Consider the map of bispectra
   $$\tau:A_Y\to A_{0,Y}.$$
For every $n\geq 0$ there is a triangle in the homotopy category
$\Ho(Pre^\Sigma(Sm/k))$ (see~\cite[section~7]{GP} for details)
   $$|d\mapsto A_Y(n+1)(-\times\Delta^d)|\xrightarrow{\gamma_n}
     \Omega|d\mapsto A_Y(n)(-\times\Delta^d)|\xrightarrow{\Omega\tau_n}
     \Omega|d\mapsto A_{0,Y}(n)(-\times\Delta^d)|.$$
The map $\gamma_1$ is induced by a zigzag map of spectra
   $$K^{Gr}(C^{\ps}\cc A(-,Y\times\bb G_m^{\wedge 1}))\lra{v}\Omega K^{Gr}(S^{-1}S\cc A(-,Y))\xleftarrow{\Omega s}\Omega K^{Gr}(\cc A(-,Y)).$$
Here $S^{-1}S$ stands for Quillen's construction (see
Section~\ref{kglsp} for more details), $s:K^{Gr}(\cc A(-,Y))\to
K^{Gr}(S^{-1}S\cc A(-,Y))$ is a stable equivalence induced by the
map sending an object $M\in\cc A(-,Y)$ to $(M,0)\in S^{-1}S\cc
A(-,Y)$ (see~\cite[9.3]{Gr} and Section~\ref{kglsp}), and $v$ is a
natural map that exists by~\cite[9.4]{Gr} and~\cite[p.~16]{Wlk}. The
map $\gamma_n$ is defined as $\gamma_1$ by replacing $\cc A(-,Y)$
with $\cc A(-,Y\times\bb G_m^{\wedge n})$. Note that each map in the
zigzag agrees with the bonding map in $\bb G$-direction.

Since the category $\Ho(Pre^{\Sigma}(Sm/k))$ is triangulated with
$\Sigma_s=-\wedge S^1$ a shift functor, the latter triangle gives a
triangle
   \begin{equation*}\label{}
    \Sigma_s|d\mapsto A_Y(n+1)(-\times\Delta^d)|\to|d\mapsto A_Y(n)(-\times\Delta^d)|
     \to|d\mapsto A_{0,Y}(n)(-\times\Delta^d)|.
   \end{equation*}
We shall also call it the {\it Grayson triangle}.

We obtain a tower in $\Ho(Pre^{\Sigma}(Sm/k))$
   \begin{equation}\label{polez}
    \cdots\to\Sigma_s^{n+1}|d\mapsto A_Y(n+1)(-\times\Delta^d)|\to\Sigma_s^n
    |d\mapsto A_Y(n)(-\times\Delta^d)|\to\cdots\to|d\mapsto A_{Y}(0)(-\times\Delta^d)|,
   \end{equation}
in which successive cones are of the form
   \begin{equation*}
    \Sigma_s^n|d\mapsto A_{0,Y}(n)(-\times\Delta^d)|.
   \end{equation*}
We shall also refer to it as the {\it Grayson tower\/} for $\cc A$.

Given $F,G\in Pre^\Sigma_{nis}(Sm/k)$ we shall use the following
notation:
   $$[F,G]:=\Hom_{\Ho Pre^\Sigma_{nis}(Sm/k)}(F,G).$$
Given $X\in Sm/k$, $Y\in Affsm/k$ and $p,q\in\bb Z$, we also set
   $$H_{\cc A}^{p,q}(X,Y):=[X_+,\Sigma_s^{p-q}|d\mapsto A_{0,Y}(q)(-\times\Delta^d)|]$$
and\label{KpA}
   $$K_i^{\cc A}(X,Y):=[X_+,\Sigma_s^{-i}|d\mapsto A_{Y}(0)(-\times\Delta^d)|],\quad i\in\bb Z.$$

\begin{rem}\label{isp}{\rm
Let $D(NSh)$ be the derived category of Nisnevich sheaves of abelian
groups on $Sm/k$. The Dold--Kan correspondence yields a complex of
presheaves of abelian groups
   $$C^*(A_{0,Y}(q))$$
which uniquely corresponds to the simplicial presheaf
   $$d\mapsto K_0^{Gr}(C^\oplus\cc A(-\times\Delta^d,Y\times\bb G_m^{\wedge q})).$$
After sheafifying $C^*(A_{0,Y}(q))$ degreewise in the Nisnevich
topology, we get a bounded above complex $C^*(A_{0,Y}(q))_{\nis}\in
D(NSh)$ (the indexing is cohomological). It is then proved similar
to~\cite[7.8]{GP} that
    $$H_{\cc A}^{p,q}(X,Y)=H^p_{\nis}(X,C^*(A_{0,Y}(q))_{\nis}[-q]),$$
where the right hand side stands for Nisnevich hypercohomology
groups of $X$ with coeffitients in $C^*(A_{0,Y}(q))_{\nis}[-q]$ (the
shift is cohomological).

}\end{rem}

\begin{thm}[Grayson]\label{grspektr}
The Grayson tower~\eqref{polez} produces a strongly convergent
spectral sequence
   \begin{equation}\label{grspseq}
    E_2^{pq}=H^{p-q,-q}_{\cc A}(X,Y)\Longrightarrow K_{-p-q}^{\cc A}(X,Y),\quad X\in Sm/k,Y\in Affsm/k,
   \end{equation}
which will also be referred to as the \emph{Grayson spectral
sequence\/} for $\cc A$.
\end{thm}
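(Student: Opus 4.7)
The plan is to extract the spectral sequence from the Grayson tower~\eqref{polez} via the standard exact-couple construction in a triangulated category, and then to verify strong convergence.

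First, applying the cohomological functors $[X_+,\Sigma_s^{\bullet}-]$ to each distinguished triangle
$$\Sigma_s^{n+1}|d\mapsto A_Y(n+1)(-\times\Delta^d)|\to\Sigma_s^n|d\mapsto A_Y(n)(-\times\Delta^d)|\to\Sigma_s^n|d\mapsto A_{0,Y}(n)(-\times\Delta^d)|$$
furnished by the tower produces long exact sequences that assemble in the usual way into an exact couple of bigraded abelian groups.

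Second, I would identify the initial page. Writing $n=-q$, the $(-q)$-th cofiber contributes at bidegree $(p,q)$ the group
$$\bigl[X_+,\Sigma_s^{p+q}\bigl(\Sigma_s^{-q}|d\mapsto A_{0,Y}(-q)(-\times\Delta^d)|\bigr)\bigr]=\bigl[X_+,\Sigma_s^{p}|d\mapsto A_{0,Y}(-q)(-\times\Delta^d)|\bigr],$$
which is $H^{p-q,-q}_{\cc A}(X,Y)$ by definition. Because the cofibers $A_{0,Y}(-q)$ are Eilenberg--Mac~Lane type symmetric spectra (only $\pi_0$ is nontrivial), this initial $E_1$ coincides with the $E_2$ displayed in~\eqref{grspseq}. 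The abutment is identified by observing that the tower terminates at $|d\mapsto A_Y(0)(-\times\Delta^d)|$; in total degree $p+q$ this is precisely $K^{\cc A}_{-p-q}(X,Y)$.

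The main obstacle will be strong convergence, which requires showing that the induced filtration on $K^{\cc A}_{-p-q}(X,Y)$ is exhaustive and Hausdorff, and that only finitely many bidegrees contribute in each total degree. The key input is that the Grayson $K$-theory presheaves $A_Y(n)$ are connective, so $\Sigma_s^n|d\mapsto A_Y(n)(-\times\Delta^d)|$ becomes increasingly connected as $n\to\infty$; combined with the Nisnevich hypercohomology interpretation of Remark~\ref{isp} and finite Nisnevich cohomological dimension of $X$, this yields the required vanishing of $\varprojlim^1$ and of $E_\infty^{p,q}$ outside a finite range. The detailed bookkeeping would follow the pattern of~\cite[Section~7]{GP}, where exactly this tower-to-spectral-sequence construction is carried out with careful attention to the zigzag defining $\gamma_n$ and its compatibility with the $\bb G$-direction bonding maps.
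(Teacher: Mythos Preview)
Your proposal is essentially correct and matches the paper's approach: the paper's proof is the single line ``This is proved similar to~\cite[7.9]{GP}'', and you have correctly identified that reference and sketched the exact-couple construction and convergence argument that it contains.

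One correction is warranted. Your justification that the initial $E_1$ coincides with the displayed $E_2$ ``because the cofibers $A_{0,Y}(-q)$ are Eilenberg--Mac~Lane type symmetric spectra (only $\pi_0$ is nontrivial)'' is not valid. The brackets $[X_+,-]$ are computed in $\Ho(Pre^\Sigma_{\nis}(Sm/k))$, so even a presheaf of spectra concentrated in $\pi_0$ contributes nontrivial Nisnevich hypercohomology in a range of degrees (this is exactly Remark~\ref{isp}); there is no a~priori reason for $d_1$ to vanish. The label $E_2$ in~\eqref{grspseq} is an indexing convention, chosen in the motivic literature by analogy with the classical Atiyah--Hirzebruch spectral sequence, not a consequence of any degeneration. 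Your convergence sketch via connectivity of $K^{Gr}$ and finite Nisnevich cohomological dimension of $X$ is the right mechanism and is what~\cite[Section~7]{GP} carries out.
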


\begin{proof}
This is proved similar to~\cite[7.9]{GP}.
\end{proof}

\begin{cor}\label{syak}
If the groups $H_{\cc A}^{p,q}(X,Y)$ are homotopy invariant in the
first argument, then so are the groups $K_i^{\cc A}(X,Y)$-s. In
particular, every fibrant replacement of $|d\mapsto
A_{Y}(0)(-\times\Delta^d)|$ in the Nisnevich local model category
$Pre_{nis}^\Sigma(Sm/k)$ is fibrant in the motivic model category
$Pre_{mot}^\Sigma(Sm/k)$.
\end{cor}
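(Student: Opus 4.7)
The plan is to deduce both assertions from the Grayson spectral sequence of Theorem~\ref{grspektr} together with the naturality of the whole construction in the first argument.

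First, for any $X\in Sm/k$ the projection $p\colon X\times\bb A^1\to X$ is a morphism in $Sm/k$, so by functoriality of the tower~\eqref{polez} in the first argument it induces a morphism between the associated strongly convergent spectral sequences~\eqref{grspseq}. On the $E_2$-page this morphism is precisely
$$p^*\colon H^{p-q,-q}_{\cc A}(X,Y)\longrightarrow H^{p-q,-q}_{\cc A}(X\times\bb A^1,Y),$$
which is an isomorphism by the homotopy invariance hypothesis. Since any morphism of strongly convergent spectral sequences that is an isomorphism on $E_2$ is automatically an isomorphism on the abutment, I conclude that $p^*\colon K^{\cc A}_i(X,Y)\to K^{\cc A}_i(X\times\bb A^1,Y)$ is an isomorphism for every $i\in\bb Z$; this is the desired homotopy invariance of the $K$-groups.

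For the ``in particular'' part, I would pick a fibrant replacement $F$ of the presheaf of symmetric spectra $|d\mapsto A_Y(0)(-\times\Delta^d)|$ in $Pre^\Sigma_{nis}(Sm/k)$. By the very definition of $K^{\cc A}_i$,
$$K^{\cc A}_i(X,Y)=[X_+,\Sigma_s^{-i}|d\mapsto A_Y(0)(-\times\Delta^d)|]\cong\pi_i(F(X))$$
for every $X\in Sm/k$. The homotopy invariance of the $K$-groups just established then forces $F(X)\to F(X\times\bb A^1)$ to be a stable weak equivalence of symmetric spectra for every $X$, i.e.\ $F$ is $\bb A^1$-local. A Nisnevich local presheaf of symmetric spectra that is $\bb A^1$-local is motivically fibrant by the definition of $Pre^\Sigma_{mot}(Sm/k)$, so $F$ is fibrant in the motivic model structure, as claimed.

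The only point that requires any checking is the construction of the morphism of spectral sequences induced by $p$, but this is formal: the presheaves of symmetric spectra $A_Y(n)$, their bonding maps, the Grayson triangles, and therefore the entire tower~\eqref{polez} are all manifestly functorial in $X\in Sm/k$, so passing to the associated filtrations and spectral sequences preserves this naturality. I do not anticipate any real obstacle beyond this bookkeeping; the entire argument is a formal consequence of the spectral sequence of Theorem~\ref{grspektr}.
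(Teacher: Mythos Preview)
Your proof is correct and is precisely the argument the paper has in mind: the corollary is stated immediately after Theorem~\ref{grspektr} with no proof, so it is meant to follow formally from the strongly convergent spectral sequence by exactly the comparison-of-spectral-sequences argument you give, together with the standard fact that fibrant objects in the motivic localization are the Nisnevich-fibrant $\bb A^1$-local presheaves.
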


Below we shall study conditions when the Grayson spectral
sequence~\eqref{grspseq} is expressed in terms of bispectra.

Given a bispectrum $X=(X_0,X_1,\ldots)$, the {\it shift in $\bb
G$-direction\/} $\Sigma_{\bb G}X$ is the bispectrum
$(X_1,X_2,\ldots)$. Similarly, the $n$th shift $\Sigma_{\bb G}^nX$
is the bispectrum $(X_n,X_{n+1},\ldots)$. For each $n\geq 0$, we
have a triangle in the homotopy category of bispectra
$\Ho(Pre^{\Sigma,\bb G}(Sm/k))$
   $$\Sigma^{n+1}_{\bb G}|d\mapsto A_Y(-\times\Delta^d)|\xrightarrow\gamma\Omega\Sigma^n_{\bb G}|d\mapsto A_Y(-\times\Delta^d)|
     \xrightarrow{\Omega\tau}\Omega\Sigma^n_{\bb G}|d\mapsto A_{0,Y}(-\times\Delta^d)|.$$
Since the category $\Ho(Pre^{\Sigma,\bb G}(Sm/k))$ is triangulated
with $\Sigma_s=-\wedge S^1$ a shift functor, the latter triangle
gives a triangle
   \begin{equation*}\label{}
    \Sigma_s\Sigma^{n+1}_{\bb G}|d\mapsto A_Y(-\times\Delta^d)|\to\Sigma^n_{\bb G}|d\mapsto A_Y(-\times\Delta^d)|
     \xrightarrow{\tau}\Sigma^n_{\bb G}|d\mapsto A_{0,Y}(-\times\Delta^d)|.
   \end{equation*}
We shall also call it the {\it Grayson triangle of bispectra}.

We obtain a tower of bispectra in $\Ho(Pre^{\Sigma,\bb G}(Sm/k))$
   \begin{equation}\label{sber}
    \cdots\to\Sigma_s^{n+1}\Sigma^{n+1}_{\bb G}|d\mapsto A_Y(-\times\Delta^d)|\to\Sigma_s^n\Sigma^n_{\bb
    G}|d\mapsto A_Y(-\times\Delta^d)|\to\cdots\to|d\mapsto A_Y(-\times\Delta^d)|,
   \end{equation}
in which successive cones are of the form
   \begin{equation*}
    \Sigma_s^n\Sigma^n_{\bb G}|d\mapsto A_{0,Y}(-\times\Delta^d)|.
   \end{equation*}
We shall also refer to it as the {\it Grayson tower of bispectra\/}
for $\cc A$.

Given a presheaf $\cc F$ of abelian groups and a scheme $X\in Sm/k$,
one sets
   $$\cc F(X\wedge\bb G_m):=\kr(\iota^*:\cc F(X\times\bb G_m)\to\cc F(X\times\spec k)).$$
There is a map of complexes
   $$\beta:C^*(A_{0,Y}(q))\to C^*(A_{0,Y}(q+1))(-\times\bb G_m),$$
where the left arrow is induced by map~\eqref{hook}.
Homotopy~\eqref{sorok} implies $\beta$ uniquely factors through
$C^*(A_{0,Y}(q+1))(-\wedge\bb G_m)$. Therefore one gets maps
   $$\beta^{p,q}:H^{p,q}_{\cc A}(X,Y)\to H^{p+1,q+1}_{\cc A}(X\wedge\bb G_m,Y).$$

\begin{defs}{\rm
We say that the bigraded presheaves $H^{*,*}_{\cc A}(-,Y)$ satisfy
the {\it cancelation property\/} if all maps $\beta^{p,q}$ are
isomorphisms.

}\end{defs}

Let $X=(X_0,X_1,\ldots)$ and $Y=(Y_0,Y_1,\ldots)$ be two bispectra.
Recall that a map of bispectra $f:X\to Y$ is a level Nisnevich local
equivalence if so is each $f_n:X_n\to Y_n$ in
$Pre^\Sigma_{nis}(Sm/k)$. By common facts for model categories (see,
e.g.,~\cite{Hov}) there is a level Nisnevich local equivalence of
bispectra
   $$u:X\to\wt X$$
such that each map $u_n:X_n\to\wt X_n$ is a cofibration and each
$\wt X_n$ is fibrant in $Pre^\Sigma_{nis}(Sm/k)$. Moreover, the map
is functorial in $X$.

Consider the bispectrum $A_{0,Y}=(A_{0,Y}(0),A_{0,Y}(1),\ldots)$.
Denote by $A_{0,Y}^\Delta$ and $\wt A_{0,Y}^\Delta$ the bispectra
   $$(|d\mapsto A_{0,Y}(0)(-\times\Delta^d)|,|d\mapsto A_{0,Y}(1)(-\times\Delta^d)|,\ldots)$$
and
   $$(\wt{(A_{0,Y}^\Delta)}_0,\wt{(A_{0,Y}^\Delta)}_1,\wt{(A_{0,Y}^\Delta)}_2,\ldots)$$
respectively. Then there is a map of bispectra (see above)
   $$u:A_{0,Y}^\Delta\to\wt A_{0,Y}^\Delta.$$
Observe that there is an isomorphism
   $$H^{p,q}_{\cc A}(X,Y)\cong\pi(X_+,\Sigma^{p-q}_s(\wt A_{0,Y}^\Delta)_q),$$
where the right hand side stands for the usual homotopy equivalence
classes of maps.

\begin{lem}\label{brussels}
The bigraded presheaves $H^{*,*}_{\cc A}(-,Y)$ satisfy the
cancelation property if and only if each structure map of the
bispectrum $\wt A_{0,Y}^\Delta$
   $$(\wt A_{0,Y}^\Delta)_n\to\Omega_{\bb G}(\wt A_{0,Y}^\Delta)_{n+1}$$
is a stable weak equivalence in $Pre^\Sigma(Sm/k)$.
\end{lem}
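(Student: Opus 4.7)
The plan is to identify, for each $p,q\in\bb Z$ and each $X\in Sm/k$, the cancellation map $\beta^{p,q}$ with the morphism induced on Nisnevich homotopy classes by the bispectrum structure map $(\wt A_{0,Y}^\Delta)_q\to\Omega_{\bb G}(\wt A_{0,Y}^\Delta)_{q+1}$. Once this identification is made, both implications follow from standard properties of the Nisnevich local model category.

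Since $u:A_{0,Y}^\Delta\to\wt A_{0,Y}^\Delta$ is a levelwise Nisnevich local equivalence with levelwise Nisnevich locally fibrant target, the observation preceding the lemma gives a canonical isomorphism
$$H^{p,q}_{\cc A}(X,Y)\cong[X_+,\Sigma_s^{p-q}(\wt A_{0,Y}^\Delta)_q].$$
Applying $[X_+,\Sigma_s^{p-q}(-)]$ to the structure map yields a morphism into $[X_+,\Sigma_s^{p-q}\Omega_{\bb G}(\wt A_{0,Y}^\Delta)_{q+1}]$. By the adjunction $(-\wedge\bb G)\dashv\Omega_{\bb G}$, together with the cofibre sequence $X_+\to(X\times\bb G_m)_+\to X_+\wedge\bb G_m$ and the equivalence $\bb G\simeq(\bb G_m,1)$ of pointed presheaves, this target identifies with $H^{p+1,q+1}_{\cc A}(X\wedge\bb G_m,Y)$ via the kernel definition $\cc F(X\wedge\bb G_m)=\krr(\iota^*)$. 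Comparing this adjoint map with the construction of $\beta^{p,q}$ built out of the chain-level map~\eqref{hook} and its null-homotopy~\eqref{sorok}, one sees that the two coincide because the bonding map $a_q$ of the bispectrum $A_{0,Y}$ is assembled from exactly the same data.

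For the $(\Leftarrow)$ direction, a stable weak equivalence in $Pre^\Sigma(Sm/k)$ is by definition a schemewise stable equivalence, hence in particular a Nisnevich local equivalence, and therefore induces isomorphisms on $[X_+,\Sigma_s^n(-)]$ in $\Ho(Pre^\Sigma_{nis}(Sm/k))$; the identification above then delivers the cancellation property. For the $(\Rightarrow)$ direction, assume all $\beta^{p,q}$ are isomorphisms. Both $(\wt A_{0,Y}^\Delta)_q$ and $\Omega_{\bb G}(\wt A_{0,Y}^\Delta)_{q+1}$ are Nisnevich locally fibrant (the former by construction, the latter because $\bb G$ is cofibrant), so $[X_+,\Sigma_s^n(-)]$ computes the stable homotopy group $\pi_{-n}$ of the evaluation at $X$. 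Letting $X$ and $n=q-p$ vary, the bonding map is a schemewise isomorphism on all stable homotopy groups, hence a stable weak equivalence in $Pre^\Sigma(Sm/k)$.

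The main obstacle is the bookkeeping in the identification step: one has to check that the factorization of the chain-level map $\beta$ through $C^*(A_{0,Y}(q+1))(-\wedge\bb G_m)$ provided by the homotopy $h$ is, up to the Nisnevich local fibrant replacement $u$, precisely the adjoint of the bispectrum bonding map $a_q$. Once this compatibility is recorded, both implications reduce to one-line applications of standard facts about the Nisnevich local model structure.
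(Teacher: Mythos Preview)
The paper states Lemma~\ref{brussels} without proof, treating it as an immediate consequence of the identification
\[
H^{p,q}_{\cc A}(X,Y)\cong\pi(X_+,\Sigma^{p-q}_s(\wt A_{0,Y}^\Delta)_q)
\]
recorded just before the lemma. Your argument is exactly the unpacking of definitions that justifies this: you identify the structure map on schemewise stable homotopy groups with the cancellation map $\beta^{p,q}$, and then observe that a map between Nisnevich locally fibrant (hence stably projectively fibrant) presheaves of symmetric spectra is a stable weak equivalence in $Pre^\Sigma(Sm/k)$ precisely when it induces isomorphisms on all schemewise stable homotopy groups. Both directions then follow at once.

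One small point of care: in the $(\Rightarrow)$ direction you need that $\Omega_{\bb G}(\wt A_{0,Y}^\Delta)_{q+1}$ is itself schemewise stably fibrant so that $[X_+,\Sigma_s^n(-)]$ really computes $\pi_{-n}$ of the evaluation. You note this follows from cofibrancy of $\bb G$, which is correct, but it also uses that Nisnevich local fibrancy implies stable projective fibrancy (the former model structure being a left Bousfield localization of the latter). With that in hand, the cofibre sequence $\spec k_+\to(\bb G_m)_+\to\bb G$ is split, so $\pi_*(\Omega_{\bb G}(\wt A_{0,Y}^\Delta)_{q+1}(X))$ is exactly the kernel defining $H^{*+1,q+1}_{\cc A}(X\wedge\bb G_m,Y)$, and the identification with $\beta^{p,q}$ goes through as you describe, since the bonding map $a_q$ and the chain-level map $\beta$ are built from the same data~\eqref{hook} and~\eqref{sorok}. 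Your proof is correct and supplies precisely the routine verification the paper omits.
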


We can also define bispectra $A_{Y}^\Delta$ and $\wt A_{Y}^\Delta$
similar to the bispectra $A_{0,Y}^\Delta$ and $\wt A_{0,Y}^\Delta$.
There is a map of towers in $\Ho(Pre^\Sigma(Sm/k))$
   $$\xymatrix{\cdots\ar[r]&\Sigma_s^{n+1}(A_{Y}^\Delta)_{n+1}\ar[r]\ar[d]&\Sigma_s^{n}(A_{Y}^\Delta)_n\ar[d]\ar[r]&\cdots\ar[r]&(A_{Y}^\Delta)_0\ar[d]\\
               \cdots\ar[r]&\Sigma_s^{n+1}(\wt A_{Y}^\Delta)_{n+1}\ar[r]&\Sigma_s^{n}(\wt A_{Y}^\Delta)_n\ar[r]&\cdots\ar[r]&(\wt A_{Y}^\Delta)_0,}$$
where the upper tower is the Grayson tower and the vertical
morphisms are Nisnevich local equivalences in
$Pre^\Sigma_{nis}(Sm/k)$. Moreover, we have maps of successive cones
of both towers
   $$\Sigma_s^{n}u_n:\Sigma_s^{n}(A_{0,Y}^\Delta)_n\to\Sigma_s^{n}(\wt A_{0,Y}^\Delta)_n.$$

For every $q\geq 0$ and $p\in\bb Z$ one sets
   $$K_{-p}^{\cc A}(X,Y\wedge\bb G_m^{q}):=[X_+,\Sigma_s^{p-q}(A_{Y}^\Delta)_q]$$
Observe that there is an isomorphism
   $$K_{-p}^{\cc A}(X,Y\wedge\bb G_m^{q})\cong\pi(X_+,\Sigma_s^{p-q}(\wt A_{Y}^\Delta)_q).$$

\begin{thm}[Cancelation for $K$-theory]\label{novgorod}
Suppose the bigraded presheaves $H^{*,*}_{\cc A}(-,Y)$ satisfy the
cancelation property. Then each structure map of the bispectrum $\wt
A_{Y}^\Delta$
   \begin{equation}\label{len}
    (\wt A_{Y}^\Delta)_q\to\Omega_{\bb G}(\wt A_{Y}^\Delta)_{q+1},\quad q\geq 0,
   \end{equation}
is a stable weak equivalence of symmetric spectra. In particular,
the natural map
   $$K_{-p}^{\cc A}(X,Y\wedge\bb G_m^{q})\to K_{-p-1}^{\cc A}(X\wedge\bb G_m,Y\wedge\bb G_m^{q+1}),$$
induced by~\eqref{len}, is an isomorphism for all $p\in\bb Z$.
\end{thm}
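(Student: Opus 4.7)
The plan is to derive the cancelation for $K$-theory from the cancelation hypothesis for motivic cohomology by a comparison of two shifted Grayson towers of presheaves of symmetric spectra.

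First, by Lemma~\ref{brussels} the cancelation hypothesis is equivalent to the statement that every structure map $(\wt A_{0,Y}^\Delta)_n\to\Omega_{\bb G}(\wt A_{0,Y}^\Delta)_{n+1}$ of the Eilenberg--Mac~Lane bispectrum is already a stable weak equivalence in $Pre^\Sigma(Sm/k)$.

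Next I would fix $q\geq 0$ and write down two towers in $\Ho(Pre^\Sigma(Sm/k))$ obtained from the Grayson tower~\eqref{polez} by re-indexing. The first is the tower at bispectrum level $q$,
$$\cdots\to\Sigma_s^{n+1}A_Y^\Delta(q+n+1)\to\Sigma_s^n A_Y^\Delta(q+n)\to\cdots\to A_Y^\Delta(q),$$
with successive cones $\Sigma_s^n A_{0,Y}^\Delta(q+n)$, and the second is the $\Omega_{\bb G}$-loop of the tower at level $q+1$,
$$\cdots\to\Sigma_s^{n+1}\Omega_{\bb G}A_Y^\Delta(q+n+2)\to\Sigma_s^n\Omega_{\bb G}A_Y^\Delta(q+n+1)\to\cdots\to\Omega_{\bb G}A_Y^\Delta(q+1),$$
with successive cones $\Sigma_s^n\Omega_{\bb G}A_{0,Y}^\Delta(q+n+1)$. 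Since the bispectrum structure maps $a_n$ of $A_Y$ and $a_n^{0}$ of $A_{0,Y}$ are compatible with $\tau$ and hence with the map $\gamma_n$ producing the Grayson triangle, they assemble into a morphism from the first tower to the second. On the $n$th cone, this morphism is the $\Sigma_s^n$-shift of the $(q+n)$th structure map of $\wt A_{0,Y}^\Delta$, which is a stable weak equivalence by the first step.

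To conclude, I would fix $X\in Sm/k$ and evaluate both towers at $X$. Applying Theorem~\ref{grspektr} in its shifted form produces two strongly convergent Grayson spectral sequences whose $E_2$-terms are expressed via $H^{*,*}_{\cc A}$, together with a comparison morphism induced by the map of towers. By the previous step the induced map on $E_2$-pages is an isomorphism, and strong convergence then forces the induced map on abutments
$$\pi_i\bigl(A_Y^\Delta(q)(X)\bigr)\to\pi_i\bigl(\Omega_{\bb G}A_Y^\Delta(q+1)(X)\bigr)$$
to be an isomorphism for every $i\in\bb Z$ and every $X\in Sm/k$. This is precisely the statement that the structure map of $A_Y^\Delta$ is a stable weak equivalence in $Pre^\Sigma(Sm/k)$; passing to the fibrant replacement yields~\eqref{len}, and the assertion on the $K$-groups follows at once by applying $[X_+,\Sigma_s^{p-q}(-)]$.

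The point that I expect to require real care is the second step: verifying that the bispectrum structure maps really do yield a morphism of the two towers which on successive cones reduces to the structure map of $A_{0,Y}^\Delta$. This amounts to unpacking the construction of $\gamma_n$ and checking naturality of the Grayson triangle of bispectra with respect to the bispectrum itself, using that $\tau:A_Y\to A_{0,Y}$ is a morphism of bispectra.
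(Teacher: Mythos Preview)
Your proposal is correct and follows essentially the same approach as the paper: construct a map of (shifted) Grayson towers via the bispectrum structure maps, invoke Lemma~\ref{brussels} to see that the induced maps on successive cones are stable weak equivalences, and then compare the two strongly convergent spectral sequences from Theorem~\ref{grspektr} to conclude. The paper carries this out for $q=0$ and then remarks that the general case is similar, while you treat general $q$ directly; also note that the paper works throughout with the Nisnevich local fibrant replacement $\wt A_Y^\Delta$ rather than $A_Y^\Delta$, so your spectral sequence abutments should be $\pi_i((\wt A_Y^\Delta)_q(X))$ rather than $\pi_i(A_Y^\Delta(q)(X))$.
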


\begin{proof}
We have a map of towers in $\Ho(Pre^\Sigma(Sm/k))$
   $$\xymatrix{\cdots\ar[r]&\Sigma_s^{n+1}(\wt A_{Y}^\Delta)_{n+1}\ar[r]\ar[d]&\Sigma_s^{n}(\wt A_{Y}^\Delta)_n\ar[d]\ar[r]&\cdots\ar[r]&(\wt A_{Y}^\Delta)_0\ar[d]\\
               \cdots\ar[r]&\Sigma_s^{n+1}\Omega_{\bb G}(\wt A_{Y}^\Delta)_{n+2}\ar[r]&\Sigma_s^{n}\Omega_{\bb G}(\wt A_{Y}^\Delta)_{n+1}
               \ar[r]&\cdots\ar[r]&\Omega_{\bb G}(\wt A_{Y}^\Delta)_1,}$$
where the upper tower is the Grayson tower and the vertical
morphisms are structure morphisms of the bispectrum $\wt
A_Y^\Delta$. By Lemma~\ref{brussels} all maps of successive cones
   $$\Sigma_s^{n}(\wt A_{0,Y}^\Delta)_n\to\Sigma_s^{n}\Omega_{\bb G}(\wt A_{0,Y}^\Delta)_{n+1}$$
are stable weak equivalences in $Pre^\Sigma(Sm/k)$.

For every $X\in Sm/k$, Theorem~\ref{grspektr} implies the upper
tower produces a strongly convergent spectral sequence
   $$E^2_{pq}=\pi_{p+q}(\Sigma_s^{q}(\wt A_{0,Y}^\Delta)_q(X))\Longrightarrow\pi_{p+q}((\wt A_{Y}^\Delta)_0(X))$$
and the lower tower produces a strongly convergent spectral sequence
   $$E^2_{pq}=\pi_{p+q}(\Sigma_s^{q}\Omega_{\bb G}(\wt A_{0,Y}^\Delta)_{q+1}(X))\Longrightarrow\pi_{p+q}(\Omega_{\bb G}(\wt A_{Y}^\Delta)_1(X)).$$
Since both spectral sequences are isomorphic, we conclude that the
map
   $$(A_{Y}^\Delta)_0\to\Omega_{\bb G}(A_{Y}^\Delta)_1$$
is a stable weak equivalence in $Pre^\Sigma(Sm/k)$. It is proved in
a similar fashion that all other vertical maps in the diagram above
are stable weak equivalences in $Pre^\Sigma(Sm/k)$, and hence so are
their desuspensions.
\end{proof}

We are now in a position to prove the main result of the section.

\begin{thm}\label{horosho}
Suppose the groups $H^{p,q}_{\cc A}(X,Y)$ are homotopy invariant in
the first argument and satisfy the cancelation property. Then the
bispectra $\wt A_{Y}^\Delta$ and $\wt A_{0,Y}^\Delta$ are
motivically fibrant in $Pre^{\Sigma,\bb G}_{mot}(Sm/k)$ and the maps
   $$A_{Y}\to\wt A_{Y}^\Delta,\quad A_{0,Y}\to\wt A_{0,Y}^\Delta$$
are motivic weak equivalences of bispectra. As a result, one has a
tower in $SH(k)$
   \begin{equation}\label{russa}
    \cdots\to\Sigma_s^{q+1}\Sigma^{q+1}_{\bb G}A_Y\to\Sigma_s^q\Sigma^q_{\bb G}A_Y\to\cdots\to A_Y,
   \end{equation}
in which successive cones are of the form $\Sigma_s^q\Sigma^q_{\bb
G}A_{0,Y}$. This tower produces a strongly convergent spectral
sequence
   $$E^2_{pq}=SH(k)(X_+,\Sigma_s^{-p}\Sigma^q_{\bb G}A_{0,Y})\Longrightarrow SH(k)(X_+,\Sigma_s^{-p-q}A_Y),$$
which is isomorphic to the Gayson spectral sequence for $\cc A$.
\end{thm}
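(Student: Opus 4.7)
The plan is to establish the three assertions in order: (i) motivic fibrancy of $\wt A_Y^\Delta$ and $\wt A_{0,Y}^\Delta$, (ii) the motivic equivalence statement, and (iii) the identification of the spectral sequences. The technical ingredients are already in place: Corollary~\ref{syak}, Lemma~\ref{brussels}, and Theorem~\ref{novgorod}, together with the connectivity of $A_{0,Y}$ recorded after~\eqref{ilin}.

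For (i), I would invoke the standard criterion: a bispectrum is fibrant in $Pre^{\Sigma,\bb G}_{mot}(Sm/k)$ precisely when each level is motivically fibrant in $Pre^\Sigma_{mot}(Sm/k)$ and every adjoint bonding map $(\wt A)_n\to\Omega_{\bb G}(\wt A)_{n+1}$ is a stable equivalence in $Pre^\Sigma(Sm/k)$. Level-wise fibrancy follows by applying Corollary~\ref{syak} to each bispectrum obtained from $A_Y$ (or $A_{0,Y}$) by shifting in the $\bb G$-direction, using that the homotopy invariance hypothesis on $H^{*,*}_{\cc A}(-,Y)$ transfers across these shifts. Stability of the $\bb G$-bonding maps is exactly Lemma~\ref{brussels} for $\wt A_{0,Y}^\Delta$ and Theorem~\ref{novgorod} for $\wt A_Y^\Delta$.

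For (ii), the level-wise maps $A_Y(n)\to(\wt A_Y^\Delta)_n$ factor as $A_Y(n)\to(A_Y^\Delta)_n\to(\wt A_Y^\Delta)_n$. The second leg is a Nisnevich local, hence motivic, equivalence by construction. The first leg is the canonical comparison with the Suslin--Voevodsky realization $|d\mapsto A_Y(-\times\Delta^d)|$, which is an $\bb A^1$-local, hence motivic, equivalence by the standard argument that such realizations invert $\bb A^1$-homotopy. The same argument applies to $A_{0,Y}\to\wt A_{0,Y}^\Delta$.

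For (iii), the tower~\eqref{sber} already lives in $\Ho(Pre^{\Sigma,\bb G}(Sm/k))$; under the localization to $SH(k)$, the motivic equivalences from (ii) turn it into the desired tower~\eqref{russa} with cones $\Sigma^q_s\Sigma^q_{\bb G}A_{0,Y}$. Strong convergence follows from connectivity: since each $A_{0,Y}(n)$ is an Eilenberg--Mac Lane spectrum, the cones $\Sigma_s^q\Sigma^q_{\bb G}A_{0,Y}$ are $(q-1)$-connected in the $S^1$-direction. Identification with the Grayson spectral sequence of Theorem~\ref{grspektr} is carried out by mapping the presheaf-level Grayson tower~\eqref{polez} into the $X_+$-evaluation of~\eqref{russa} via the same motivic equivalences; both spectral sequences converge strongly, the induced map of $E_2$-pages is the identification coming from unwinding the brackets defining $H^{p-q,-q}_{\cc A}(X,Y)$ and $K^{\cc A}_{-p-q}(X,Y)$, and the conclusion follows from the comparison theorem for strongly convergent spectral sequences. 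The principal obstacle is keeping this three-way compatibility straight --- between the bispectral tower~\eqref{sber}, its $SH(k)$-localization~\eqref{russa}, and the presheaf-level tower~\eqref{polez} --- so that the comparison respects the tower structure and not merely its associated graded.
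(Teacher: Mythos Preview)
Your proposal is correct and matches the paper's proof for parts (i) and (ii) almost verbatim: level-wise motivic fibrancy via Corollary~\ref{syak} (and the direct homotopy-invariance hypothesis for $\wt A_{0,Y}^\Delta$), $\Omega_{\bb G}$-stability via Lemma~\ref{brussels} and Theorem~\ref{novgorod}, and the factorization $A_Y\to A_Y^\Delta\to\wt A_Y^\Delta$ with the first arrow an $\bb A^1$-equivalence (the paper cites~\cite[3.8]{MV}) and the second a Nisnevich local equivalence.

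For (iii) your route is slightly more circuitous than the paper's. You propose proving strong convergence separately from connectivity of the $A_{0,Y}(n)$, then comparing two strongly convergent spectral sequences via a map of towers. The paper instead observes that once $\wt A_Y^\Delta$ and $\wt A_{0,Y}^\Delta$ are known to be motivically fibrant, the $SH(k)$-brackets $SH(k)(X_+,\Sigma_s^{-p}\Sigma_{\bb G}^q\wt A_{0,Y}^\Delta)$ and $SH(k)(X_+,\Sigma_s^{-p-q}\wt A_Y^\Delta)$ are literally computed as $\Hom_{\Ho(Pre^\Sigma(Sm/k))}(X_+,\Sigma_s^{-p}(\wt A_{0,Y}^\Delta)_q)$ and $\Hom_{\Ho(Pre^\Sigma(Sm/k))}(X_+,\Sigma_s^{-p-q}(\wt A_Y^\Delta)_0)$, so the $SH(k)$-spectral sequence \emph{is} the Grayson spectral sequence~\eqref{grspseq} on the nose, inheriting strong convergence from Theorem~\ref{grspektr}. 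This avoids both the separate connectivity argument and the spectral-sequence comparison theorem, and sidesteps the three-way compatibility concern you flag at the end.
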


\begin{proof}
By Corollary~\ref{syak} $(\wt A_{Y}^\Delta)_0$ is fibrant in
$Pre_{mot}^\Sigma(Sm/k)$. It is proved in a similar way that each
$(\wt A_{Y}^\Delta)_q$, $q>0$, is fibrant in
$Pre_{mot}^\Sigma(Sm/k)$. So the bispectrum $\wt A_{Y}^\Delta$ is
level motivically fibrant. Theorem~\ref{novgorod} implies it is
fibrant in $Pre_{mot}^{\Sigma,\bb G}(Sm/k)$. The fact that the
bispectrum $\wt A_{0,Y}^\Delta$ is fibrant in $Pre_{mot}^{\Sigma,\bb
G}(Sm/k)$ follows from Lemma~\ref{brussels} and the homotopy
invariance of the groups $H^{p,q}_{\cc A}(X,Y)$ in the first
argument.

Let us show that $A_{Y}\to\wt A_{Y}^\Delta$ is a motivic weak
equivalence of bispectra. This map is the composition
   $$A_{Y}\to A_Y^\Delta\to\wt A_{Y}^\Delta,$$
where the right arrow is a Nisnevich local equivalence, hence a
motivic weak equivalence. The left arrow is a motivic weak
equivalence by~\cite[3.8]{MV}. Similarly, $A_{0,Y}\to\wt
A_{0,Y}^\Delta$ is a motivic weak equivalence of bispectra.

The Grayson tower of bispectra~\eqref{sber} in $\Ho(Pre^{\Sigma,\bb
G})(Sm/k)$ yields a tower of bispectra in $SH(k)$
   \begin{equation}\label{neploho}
    \cdots\to\Sigma_s^{q+1}\Sigma^{q+1}_{\bb G}A_Y\to\Sigma_s^q\Sigma^q_{\bb G}A_Y\to\cdots\to A_Y,
   \end{equation}
in which successive cones are of the form $\Sigma_s^q\Sigma^q_{\bb
G}A_{0,Y}$. This tower produces a spectral sequence
   $$E^2_{pq}=SH(k)(X_+,\Sigma_s^{-p}\Sigma^q_{\bb G}A_{0,Y})\Longrightarrow SH(k)(X_+,\Sigma_s^{-p-q}A_Y),$$
which is the same as the spectral sequence
   $$E^2_{pq}=SH(k)(X_+,\Sigma_s^{-p}\Sigma^q_{\bb G}\wt A^\Delta_{0,Y})
     \Longrightarrow SH(k)(X_+,\Sigma_s^{-p-q}\wt A^\Delta_Y).$$
Since all bispectra involved into the latter spectral sequence are
motivically fibrant, it follows that it is isomorphic to the
spectral sequence
   $$E^2_{pq}=\Hom_{\Ho(Pre^\Sigma(Sm/k))}(X_+,\Sigma_s^{-p}(\wt A^\Delta_{0,Y})_q)
     \Longrightarrow\Hom_{\Ho(Pre^\Sigma(Sm/k))}(X_+,\Sigma_s^{-p-q}(\wt A^\Delta_Y)_0).$$
It is plainly isomorphic to the Grayson spectral
sequence~\eqref{grspseq}.
\end{proof}

\section{Postnikov towers in $SH_{S^1}(k)$ and $SH(k)$}

Voevodsky~\cite{VoeAppr} has defined a canonical tower on the
motivic stable homotopy category $SH_{S^1}(k)$, which is called the
{\it motivic Postnikov tower}.

Let $\Sigma_{\bb G}^nSH_{S^1}(k)$ be the localizing subcategory of
$SH_{S^1}(k)$ generated by objects of the form $\Sigma_{\bb G}^nE$,
$E\in SH_{S^1}(k)$. This gives us the tower of localizing
subcategories
   $$\cdots\subset\Sigma_{\bb G}^{n+1}SH_{S^1}(k)\subset\Sigma^n_{\bb G}SH_{S^1}(k)\subset\cdots\subset SH_{S^1}(k).$$

Take $E\in SH_{S^1}(k)$ and consider the cohomological functor
   $$\Hom_{\Sigma^n_{\bb G}SH_{S^1}(k)}(-,E):\Sigma^n_{\bb G}SH_{S^1}(k)\to Ab.$$
By~\cite{N} this functor is represented by an object $r_nE$ of
$\Sigma^n_{\bb G}SH_{S^1}(k)$. Sending $E$ to $r_nE$ defines a right
adjoint $r_n:SH_{S^1}(k)\to\Sigma^n_{\bb G}SH_{S^1}(k)$ to the
inclusion $i_n:\Sigma^n_{\bb G}SH_{S^1}(k)\to SH_{S^1}(k)$. Let
$f_n:=i_n\circ r_n$ with counit $f_n\to\id$. Thus, for each $E\in
SH_{S^1}(k)$, there is a canonical tower in $SH_{S^1}(k)$
   $$\cdots\to f_{n+1}E\to f_nE\to\cdots\to f_0E=E,$$
the {\it motivic Postnikov tower for $S^1$-spectra}. We write
$f_{n/n+r}E$ for the cofibre of $f_{n+r}\to f_nE$; we use the
notation $s_n:=f_{n/n+1}$ to denote the $n$th slice in the Postnikov
tower.

Let $EM:Ab\to Sp^\Sigma$ be the Eilenberg--Mac~Lane functor in the
sense of~\cite[Appendix~A]{GP} and let $\bb Z(n)$, $n\geq 0$, be the
Suslin--Voevodsky complexes~\cite{SV1}.

\begin{prop}\label{kahn}
Suppose $k$ is a perfect field. Then for every $n\geq 0$ we have
$EM(\bb Z(n))=s_n(EM(\bb Z(n)))$.
\end{prop}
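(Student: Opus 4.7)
The plan is to use the Suslin--Voevodsky description of $\bb Z(n)$ to identify $EM(\bb Z(n))$, up to motivic weak equivalence, with a shift of an $n$-fold $\bb G$-suspension. First, I would recall that
\[
\bb Z(n) \simeq C_*(\bb Z_{tr}(\bb G_m^{\wedge n}))[-n],
\]
where $\bb Z_{tr}(-)$ is the free Nisnevich sheaf with transfers, $C_*(-)$ is the Suslin singular complex, and the shift is cohomological. Via Dold--Kan, applying $EM$ to this complex yields a symmetric spectrum which, modulo $S^1$-shifts (invertible in $SH_{S^1}(k)$), is $EM(C_*(\bb Z_{tr}(\bb G_m^{\wedge n})))$. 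It therefore suffices to place this latter object in $\Sigma^n_{\bb G} SH_{S^1}(k)$.

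The key step is to compare the transferred and non-transferred Eilenberg--Mac~Lane constructions. The canonical map of pointed presheaves
\[
\bb Z[\bb G_m^{\wedge n}] \longrightarrow \bb Z_{tr}(\bb G_m^{\wedge n}),
\]
from the naive free abelian presheaf to the free sheaf with transfers, induces a map on Suslin complexes and then on Eilenberg--Mac~Lane spectra. Over a perfect field, Voevodsky's theorem identifying motivic cohomology with and without transfers implies this becomes a motivic weak equivalence; this is precisely where the perfectness hypothesis enters.

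The source $EM(C_*(\bb Z[\bb G_m^{\wedge n}]))$ is manifestly a $\Sigma^n_{\bb G}$-suspension in $SH_{S^1}(k)$, since the free abelian presheaf functor is compatible with smash products of pointed presheaves, yielding a motivic equivalence $EM(C_*(\bb Z[\bb G_m^{\wedge n}])) \simeq \Sigma^n_{\bb G} EM(C_*(\bb Z))$. Combining this with the transferred-versus-untransferred comparison, and using the closure of $\Sigma^n_{\bb G} SH_{S^1}(k)$ under $S^1$-suspensions and desuspensions, one concludes $EM(\bb Z(n)) \in \Sigma^n_{\bb G} SH_{S^1}(k)$.

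The hard part is the transfers-versus-no-transfers comparison, which rests on Voevodsky's theorem on the homotopy invariance of Nisnevich cohomology of homotopy invariant presheaves with transfers---a deep result available only over perfect fields. The Kahn--Levine argument cited turns precisely on this input.
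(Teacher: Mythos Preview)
The paper does not supply its own proof of this proposition; it is quoted verbatim from Kahn--Levine~\cite[1.4.3]{KL}. So there is no ``paper's proof'' to compare against, and your task is really to reproduce or paraphrase the Kahn--Levine argument.

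Your proposal contains a genuine error at the crucial step. You assert that the canonical map
\[
C_*\bb Z[\bb G_m^{\wedge n}]\longrightarrow C_*\bb Z_{tr}(\bb G_m^{\wedge n})
\]
becomes a motivic weak equivalence over a perfect field, citing an unspecified ``theorem of Voevodsky identifying motivic cohomology with and without transfers.'' No such theorem exists, and the claim is false already for $n=1$. Indeed, since $\bb G_m$ is $\bb A^1$-rigid, for any field extension $F/k$ one has
\[
\pi_0\bigl(EM(C_*\bb Z[\bb G_m^{\wedge 1}])\bigr)(\spec F)=\bb Z[F^\times]/\bb Z\cdot[1],
\]
the free abelian group on the pointed set $F^\times$, whereas
\[
\pi_0\bigl(EM(C_*\bb Z_{tr}(\bb G_m^{\wedge 1}))\bigr)(\spec F)=\pi_0\bigl(EM(\cc O^*)\bigr)(\spec F)=F^\times.
\]
The comparison map sends the generator $[a]$ to $a\in F^\times$ and therefore has enormous kernel (for instance every element $[ab]-[a]-[b]$). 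Since sections over fields are unchanged by Nisnevich sheafification and by the Suslin singular construction on $\bb A^1$-invariant presheaves, this discrepancy survives to $SH_{S^1}(k)$; the map is not a motivic equivalence. Put differently, your argument would force $EM(\bb Z(n)[n])\simeq\Sigma^n_{\bb G}H\bb Z^{\mathrm{const}}$ in $SH_{S^1}(k)$, making motivic cohomology a twist of ordinary (constant-sheaf) cohomology, which it is not.

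The actual Kahn--Levine argument does not attempt to strip transfers. It works inside the world of presheaves with transfers, using that $\bb Z_{tr}(\bb G_m^{\wedge n})\cong\bb Z_{tr}(\bb G_m^{\wedge 1})\otimes_{tr}\bb Z_{tr}(\bb G_m^{\wedge(n-1)})$ together with Voevodsky's Cancellation Theorem and the strict $\bb A^1$-invariance of Nisnevich cohomology of homotopy invariant presheaves with transfers (this is where perfectness of $k$ is used). These ingredients let one exhibit $EM(\bb Z(n))$ as a genuine $\bb G$-suspension inside $SH_{S^1}(k)$ without ever comparing to the naive free abelian presheaf. If you want to write out a proof, that is the route to take.
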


\begin{proof}
By~\cite[1.4.3]{KL} $EM(\bb Z(n))\in\Sigma^n_{\bb G}SH_{S^1}(k)$.
Cancellation Theorem of Voevodsky~\cite{Voe3}
and~\cite[4.2]{VoeAppr} imply
   $$\Omega^{n+1}_{\bb G}(EM(\bb Z(n))_f)\cong\Omega_{\bb G}(EM(\bb Z(0))_f)=0,$$
where $EM(\bb Z(n))_f$ is a local fibrant replacement of $EM(\bb
Z(n))$. It follows that $EM(\bb Z(n))$ is right orthogonal to
$\Sigma^{n+1}_{\bb G}SH_{S^1}(k)$, and hence $EM(\bb
Z(n))=s_n(EM(\bb Z(n)))$.
\end{proof}

Given an integer $\ell$, call $E\in SH_{S^1}(k)$ {\it
$\ell$-connected\/} if for each $n\leq\ell$, the Nisnevich sheaf
$\pi_n(\wt E)$ is zero, where $\wt E$ is a fibrant model for $E$ in
$Pre^\Sigma_{mot}(Sm/k)$. We shall also refer to $(-1)$-connected
spectra as {\it connected}.

\begin{lem}\label{nesv}
Suppose $k$ is a perfect field and $\cc A$ is a bivariant additive
category with $\ff{(Aut)}$-property. Suppose further that each
$A_{0,Y}(n)\in\Sigma^n_{\bb G}SH_{S^1}(k)$. If the presheaves
$H^{p,q}_{\cc A}(-,Y)$ are homotopy invariant, then
$A_Y(n)\in\Sigma^n_{\bb G}SH_{S^1}(k)$ for every $n\geq 0$.
\end{lem}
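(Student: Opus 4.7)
My plan is to analyse the upper part of the Grayson tower~\eqref{polez} in $SH_{S^1}(k)$ and to invoke its strong convergence. Set $G_n:=|d\mapsto A_Y(n)(-\times\Delta^d)|$ and $H_n:=|d\mapsto A_{0,Y}(n)(-\times\Delta^d)|$. By Morel--Voevodsky~\cite[3.8]{MV}, the natural maps $A_Y(n)\to G_n$ and $A_{0,Y}(n)\to H_n$ are motivic weak equivalences (exactly as already used in the proof of Theorem~\ref{horosho}); moreover Corollary~\ref{syak} applied to $A_Y(n)$ and $A_{0,Y}(n)$ in place of $A_Y(0)$ shows that the homotopy invariance hypothesis makes these realisations motivically fibrant, so the Grayson tower genuinely lives in $SH_{S^1}(k)$. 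It therefore suffices to prove $G_n\in\Sigma^n_{\bb G}SH_{S^1}(k)$, and the hypothesis on $A_{0,Y}(n)$ immediately gives $H_n\in\Sigma^n_{\bb G}SH_{S^1}(k)$ for every $n\geq 0$.

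I would then truncate the tower~\eqref{polez} above level $n$,
$$\cdots\to\Sigma_s^{n+2}G_{n+2}\to\Sigma_s^{n+1}G_{n+1}\to\Sigma_s^nG_n,$$
whose successive cones are $\Sigma_s^mH_m$ for $m\geq n$. Because $\Sigma^n_{\bb G}SH_{S^1}(k)$ is a localising subcategory of $SH_{S^1}(k)$, it is triangulated and stable under $\Sigma_s^{\pm 1}$, so each $\Sigma_s^mH_m$ lies in $\Sigma^m_{\bb G}SH_{S^1}(k)\subset\Sigma^n_{\bb G}SH_{S^1}(k)$. A short induction on $r$ then shows that $\cone(\Sigma_s^{n+r}G_{n+r}\to\Sigma_s^nG_n)$ lies in $\Sigma^n_{\bb G}SH_{S^1}(k)$ for every $r\geq 1$.

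To pass from these finite truncations to $\Sigma_s^nG_n$ itself, I will invoke the strong convergence of the Grayson spectral sequence from Theorem~\ref{grspektr} to argue that $\operatorname{holim}_r\Sigma_s^{n+r}G_{n+r}\simeq 0$ in $SH_{S^1}(k)$, equivalently that
$$\hocolim_r\cone(\Sigma_s^{n+r}G_{n+r}\to\Sigma_s^nG_n)\iso\Sigma_s^nG_n.$$
Closure of $\Sigma^n_{\bb G}SH_{S^1}(k)$ under homotopy colimits then forces $\Sigma_s^nG_n$, and hence $G_n$, into $\Sigma^n_{\bb G}SH_{S^1}(k)$; the motivic equivalence $A_Y(n)\simeq G_n$ gives the desired conclusion.

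The main obstacle I anticipate is this last convergence step: upgrading the strong convergence of Theorem~\ref{grspektr}, which is a sectionwise statement about homotopy groups, to the vanishing of $\operatorname{holim}_r\Sigma_s^{n+r}G_{n+r}$ as an object of $SH_{S^1}(k)$. Concretely, I have to check that for every $X\in Sm/k$ and every $i\in\bb Z$ both $\lim_r\pi_i(\Sigma_s^{n+r}G_{n+r}(X))$ and $\lim^1_r\pi_i(\Sigma_s^{n+r}G_{n+r}(X))$ vanish; this should follow from the exhaustiveness, Hausdorff-ness and Mittag--Leffler content packaged into strong convergence, combined with the fact (from Corollary~\ref{syak}) that homotopy invariance keeps the whole tower inside the motivic homotopy category.
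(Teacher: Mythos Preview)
There is a genuine gap in the colimit step. The cones $C_r:=\cone(\Sigma_s^{n+r}G_{n+r}\to\Sigma_s^nG_n)$ do \emph{not} form a direct system: functoriality of the cofibre applied to the square
\[
\xymatrix{\Sigma_s^{n+r+1}G_{n+r+1}\ar[r]\ar[d]&\Sigma_s^nG_n\ar[d]^{\id}\\
\Sigma_s^{n+r}G_{n+r}\ar[r]&\Sigma_s^nG_n}
\]
produces a map $C_{r+1}\to C_r$, not the other way round. (Think of the Postnikov analogue: if $T_r=\tau_{\geq r}X$ then $C_r\simeq\tau_{<r}X$, and the natural maps go $\tau_{<r+1}X\to\tau_{<r}X$.) Consequently the vanishing $\operatorname{holim}_r\Sigma_s^{n+r}G_{n+r}\simeq 0$ is equivalent to $\Sigma_s^nG_n\simeq\operatorname{holim}_rC_r$, not to a $\hocolim$. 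Since $\Sigma^n_{\bb G}SH_{S^1}(k)$ is a localising subcategory --- closed under homotopy \emph{colimits} but in general not under homotopy \emph{limits} --- knowing that each $C_r$ lies in $\Sigma^n_{\bb G}SH_{S^1}(k)$ does not let you conclude that $\Sigma_s^nG_n$ does. The obstacle you flagged (upgrading strong convergence to $\operatorname{holim}\simeq 0$) is therefore not the real one; the real one is that even after you achieve that vanishing, the closure property you need is unavailable.

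The paper's argument circumvents this by comparing two towers rather than taking a limit of one. One applies $f_n$ to the truncated Grayson tower and uses that the successive cones $\Sigma_s^{n+q}(\wt A_{0,Y}^\Delta)_{n+q}$, being in $\Sigma^n_{\bb G}SH_{S^1}(k)$ by hypothesis, are fixed by $f_n$. This yields a map of towers inducing isomorphisms on all layers; the lower spectral sequence converges strongly by Theorem~\ref{grspektr}, and the upper one does as well because, over a perfect field, $f_n$ preserves $\bb A^1$-connectedness (Proposition~\ref{kahn2}), hence preserves the connectivity that forces convergence. The two spectral sequences then agree, so the counit $f_n((\wt A_Y^\Delta)_n)\to(\wt A_Y^\Delta)_n$ is an isomorphism and $A_Y(n)\simeq(\wt A_Y^\Delta)_n\in\Sigma^n_{\bb G}SH_{S^1}(k)$. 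Note that the perfectness of $k$ enters precisely through Proposition~\ref{kahn2}; your approach, had it worked, would have bypassed that input, which is a hint that something was going wrong.
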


\begin{proof}
By Corollary~\ref{syak} $(\wt A_{Y}^\Delta)_0$ is fibrant in
$Pre_{mot}^\Sigma(Sm/k)$. It is proved in a similar way that each
$(\wt A_{Y}^\Delta)_n$, $n>0$, is fibrant in
$Pre_{mot}^\Sigma(Sm/k)$. The map $A_{Y}(n)\to(\wt A_{Y}^\Delta)_n$
is a motivic weak equivalence. It is the composition
   $$A_{Y}(n)\to(A_Y^\Delta)_n\to(\wt A_{Y}^\Delta)_n,$$
where the right arrow is a Nisnevich local equivalence, hence a
motivic weak equivalence. The left arrow is a motivic weak
equivalence by~\cite[3.8]{MV}.

We have $A_{0,Y}(m)\cong f_n(A_{0,Y}(m))\in\Sigma^n_{\bb
G}SH_{S^1}(k)$ for all $m\geq n$. Applying the functor $f_n$ to the
Grayson tower~\eqref{polez}, we get a map of towers of motivically
fibrant presheaves of spectra
   $$\xymatrix{\cdots\ar[r]&\Sigma_s^{n+1}f_{n}((\wt A_{Y}^\Delta)_{n+1})\ar[r]\ar[d]&\Sigma_s^{n}f_n((\wt A_{Y}^\Delta)_n)\ar[d]\\
               \cdots\ar[r]&\Sigma_s^{n+1}(\wt A_{Y}^\Delta)_{n+1}\ar[r]&\Sigma_s^{n}(\wt A_{Y}^\Delta)_n,}$$
in which successive cones are of the form $\Sigma_s^{n+q}(\wt
A_{0,Y}^\Delta)_{n+q}$.

Every spectrum $\Sigma_s^{n+q}(\wt A_{Y}^\Delta)_{n+q}$ is
$(n+q-1)$-connected. If $X\in Sm/k$ is of dimension $d$, then
$\Sigma_s^{n+q}(\wt A_{Y}^\Delta)_{n+q}(X)$ is $(n+q-d-1)$-connected
in $Sp^\Sigma$ by~\cite[4.3.1]{Mor}. By~\cite[6.1.1]{FS} the lower
tower produces a strongly convergent spectral sequence
   $$E^2_{pq}=\pi_{p+q}(\Sigma_s^{n+q}(\wt A_{0,Y}^\Delta)_{n+q}(X))\Longrightarrow\pi_{p+q}((\wt A_{Y}^\Delta)_n(X)).$$
To show that the spectral sequence produced by the upper tower
   $$E^2_{pq}=\pi_{p+q}(\Sigma_s^{n+q}(\wt A_{0,Y}^\Delta)_{n+q}(X))\Longrightarrow\pi_{p+q}(f_n((\wt A_{Y}^\Delta)_n)(X))$$
is strongly convergent, we need to know that each $f_n((\wt
A_{Y}^\Delta)_{n+q})$ has the same connectivity properties as $(\wt
A_{Y}^\Delta)_{n+q}$. Since $\Sigma_s^{n+q}(\wt A_{Y}^\Delta)_{n+q}$
is $(n+q-1)$-connected, it follows from~\cite[3.2]{LevGW} and
Proposition~\ref{gmloops} that so is $\Sigma_s^{n+q}f_n((\wt
A_{Y}^\Delta)_{n+q})=f_n(\Sigma_s^{n+q}(\wt A_{Y}^\Delta)_{n+q})$.
If $X\in Sm/k$ is of dimension $d$, then $\Sigma_s^{n+q}f_n((\wt
A_{Y}^\Delta)_{n+q})(X)$ is $(n+q-d-1)$-connected in $Sp^\Sigma$
by~\cite[4.3.1]{Mor}.

It follows from~\cite[6.1.1]{FS} that the second spectral sequence
is strongly convergent. We conclude that the map in $\Ho(Sp^\Sigma)$
   $$f_n((\wt A_{Y}^\Delta)_n)(X)\to(\wt A_{Y}^\Delta)_n(X)$$
is an isomorphism. We see that $A_Y(n)$ is isomorphic in
$SH_{S^1}(k)$ to $f_n((\wt A_{Y}^\Delta)_n)\in\Sigma_{\bb
G}^nSH_{S^1}(k)$.
\end{proof}

The following theorem says that Grayson's tower of
$S^1$-spectra~\eqref{polez} is isomorphic in $SH_{S^1}(k)$ to the
motivic Postnikov tower of the $K$-theory $S^1$-spectrum $K^{Gr}(\cc
A(-,Y))$. In Theorem~\ref{glavnaya} we shall extend this result to
bispectra.

\begin{thm}\label{grslice}
Suppose $k$ is a perfect field and $\cc A$ is a bivariant additive
category with $\ff{(Aut)}$-property. Suppose further that each
$A_{0,Y}(q)=s_q(A_{0,Y}(q))$. If the presheaves $H^{p,q}_{\cc
A}(-,Y)$ are homotopy invariant, then the Grayson
tower~\eqref{polez} is isomorphic in $SH_{S^1}(k)$ to the motivic
Postnikov tower
    $$\cdots\to f_{q+1}(K^{Gr}(\cc A(-,Y)))\to f_q(K^{Gr}(\cc A(-,Y)))\to\cdots\to f_0(K^{Gr}(\cc A(-,Y)))=K^{Gr}(\cc A(-,Y)).$$
\end{thm}

\begin{proof}
We have $f_0(K^{Gr}(\cc A(-,Y)))=K^{Gr}(\cc A(-,Y))$. Suppose an
isomorphism $\theta_q:\Sigma_s^q A_{Y}^\Delta(q)\cong f_q(K^{Gr}(\cc
A(-,Y)))$, $q\geq 0$, is constructed. Since
$\Sigma_s^{q+1}A_{Y}(q+1)\in\Sigma^{q+1}_{\bb G}SH_{S^1}(k)$ by the
preceding lemma and $s_q(K^{Gr}(\cc A(-,Y)))$ is right orthogonal to
$\Sigma^{q+1}_{\bb G}SH_{S^1}(k)$, it follows that there is a unique
morphism
   $$\theta_{q+1}:\Sigma_s^{q+1}A_{Y}(q+1)\to f_{q+1}(K^{Gr}(\cc A(-,Y)))$$
making the diagram
   $$\xymatrix{\Sigma_s^{q+1}A_{Y}(q+1)\ar[r]\ar[d]_{\theta_{q+1}}&\Sigma_s^{q}A_{Y}(q)\ar[d]^{\theta_q}\\
               f_{q+1}(K^{Gr}(\cc A(-,Y)))\ar[r]&f_q(K^{Gr}(\cc A(-,Y)))}$$
commutative. We claim that $\theta_{q+1}$ is an isomorphism in
$SH_{S^1}(k)$.

By assumption
$f_{q+1}(\Sigma_s^{q}A_{0,Y}(q))=f_{q+1}(\Sigma_s^{q}s_q(A_{0,Y}(q)))=\Sigma_s^{q}f_{q+1}s_q(A_{0,Y}(q))=0$.
We also have that $f_{q+1}s_q(K^{Gr}(\cc A(-,Y)))=0$, and hence the
horizontal arrows of the commutative diagram
      $$\xymatrix{f_{q+1}(\Sigma_s^{q+1}A_{Y}(q+1))\ar[r]\ar[d]_{f_{q+1}(\theta_{q+1})}
               &f_{q+1}(\Sigma_s^qA_{Y}(q))\ar[d]^{f_{q+1}(\theta_q)}\\
               f_{q+1}(f_{q+1}(K^{Gr}(\cc A(-,Y))))\ar[r]&f_{q+1}(f_q(K^{Gr}(\cc A(-,Y))))}$$
are isomorphisms. But $f_{q+1}(\theta_q)$ is an isomorphism, and
hence so is $f_{q+1}(\theta_{q+1})$. By the previous lemma
$\Sigma_s^{q+1}A_{Y}(q+1)$ is in $\Sigma^{q+1}_{\bb G}SH_{S^1}(k)$.
Since $f_{q+1}(K^{Gr}(\cc A(-,Y)))$ belongs to $\Sigma^{q+1}_{\bb
G}SH_{S^1}(k)$ as well and $f_{q+1}(\theta_{q+1})$ is an
isomorphism, we conclude that $\theta_{q+1}$ is an isomorphism.
\end{proof}

The next result computes the slices of the $K$-theory $S^1$-spectrum
$K^{Gr}(\cc A(-,Y))$. It will be extended to bispectra in
Theorem~\ref{voevslices}.

\begin{thm}\label{grslices}
Under the assumptions of Theorem~\ref{grslice} there are
isomorphisms in $SH_{S^1}(k)$
   $$s_q(K^{Gr}(\cc A(-,Y)))\cong\Sigma_s^qA_{0,Y}(q),\quad q\geq 0.$$
\end{thm}

\begin{proof}
The proof of the previous theorem shows that there is a commutative
diagram in $SH_{S^1}(k)$\footnotesize
   $$\xymatrix{\Sigma_s^{q+1}A_{Y}(q+1)\ar[r]\ar[d]_{\theta_{q+1}}&\Sigma_s^qA_{Y}(q)\ar[d]^{\theta_q}
               \ar[r]&\Sigma_s^qA_{0,Y}(q)\ar[r]&\Sigma_s^{q+2}A_{0,Y}(q+1)\ar[d]^{\Sigma_s\theta_{q+1}}\\
               f_{q+1}(K^{Gr}(\cc A(-,Y)))\ar[r]&f_q(K^{Gr}(\cc A(-,Y)))\ar[r]&s_q(K^{Gr}(\cc A(-,Y)))\ar[r]&\Sigma_sf_{q+1}(K^{Gr}(\cc A(-,Y))),}$$
\normalsize where the vertical arrows are isomorphisms. Since
$SH_{S^1}(k)$ is triangulated, then there exists an isomorphism
   $$\Sigma_s^qA_{0,Y}(q)\cong s_q(K^{Gr}(\cc A(-,Y))),$$
as required.
\end{proof}

Voevodsky~\cite{VoeProbl} defines the slice filtration in $SH(k)$
just as it is defined in $SH_{S^1}(k)$. Let $SH^{eff}(k)$ be the
smallest localizing subcategory of $SH(k)$ containing all suspension
spectra $\Sigma^\infty_{\bb G}\Sigma_s^\infty X_+$ with $X\in Sm/k$;
this is the same as the smallest localizing subcategory containing
all the $\bb G$-suspension spectra $\Sigma^\infty_{\bb G}E$ for
$E\in SH_{S^1}(k)$. For each integer $p$, let $\Sigma_{\bb G}^p
SH^{eff}(k)$ denote the smallest localizing subcategory of $SH(k)$
containing the $\bb G$-spectra $\Sigma_{\bb G}^p\cc E$ for $\cc E\in
SH^{eff}(k)$. The inclusion $i_p:\Sigma_{\bb G}^p SH^{eff}(k)\to
SH(k)$ admits the right adjoint $r_p:SH(k)\to\Sigma_{\bb G}^p
SH^{eff}(k)$; setting $f_p:=i_p\circ r_p$, one has for each $\cc
E\in SH(k)$ the functorial {\it slice tower} 
   $$\cdots\to f_{d+1}\cc E\to f_d\cc E\to\cdots\to f_0\cc E\to f_{-1}\cc E\to\cdots\to\cc E.$$
As for the slice tower in $SH_{S^1}(k)$, the existence of the
adjoint follows from~\cite{N} and the map $f_p\cc E\to\cc E$ is
universal for maps $\cc F\to\cc E$, $\cc F\in\Sigma_{\bb G}^p
SH^{eff}(k)$. The cofiber of $f_{d+1}\cc E\to f_d\cc E$ is denoted
by $s_d\cc E$.

\begin{lem}\label{babah}
Under the assumptions of Lemma~\ref{nesv} the bispectra $A_Y$ and
$A_{0,Y}$ belong to $SH^{eff}(k)$.
\end{lem}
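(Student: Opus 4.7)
The plan is to realise the bispectra $A_Y$ and $A_{0,Y}$ as homotopy colimits in $SH(k)$ of objects that are manifestly $\bb G$-suspension spectra of objects of $SH_{S^1}(k)$, and then use the fact that $SH^{eff}(k)$, being a localizing subcategory of $SH(k)$, is closed under homotopy colimits and contains every such suspension spectrum by definition.

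The starting point I would invoke is the standard presentation of a $\bb G$-spectrum as a homotopy colimit of free bispectra built from its levels. Concretely, for any $\bb G$-spectrum $X=(X_0,X_1,\ldots)\in Pre^{\Sigma,\bb G}_{mot}(Sm/k)$ there is a canonical weak equivalence
$$X\simeq\hocolim_{n\geq 0}\Sigma^{-n}_{\bb G}\Sigma^\infty_{\bb G}X_n$$
in $SH(k)$, whose transition maps arise from the adjoints of the bonding maps of $X$. I would apply this to $X=A_Y$. By Lemma~\ref{nesv} the hypotheses we are assuming guarantee that for every $n\geq 0$ we may write $A_Y(n)\simeq\Sigma^n_{\bb G}B_n$ in $SH_{S^1}(k)$ for some $B_n\in SH_{S^1}(k)$. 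Therefore
$$\Sigma^{-n}_{\bb G}\Sigma^\infty_{\bb G}A_Y(n)\simeq\Sigma^{-n}_{\bb G}\Sigma^\infty_{\bb G}\Sigma^n_{\bb G}B_n\simeq\Sigma^\infty_{\bb G}B_n\in SH^{eff}(k),$$
the last membership holding by the very definition of $SH^{eff}(k)$ recalled in the excerpt. Since $SH^{eff}(k)$ is closed under homotopy colimits, this yields $A_Y\in SH^{eff}(k)$. The same argument applied to $A_{0,Y}$, now using directly the standing assumption $A_{0,Y}(n)\in\Sigma^n_{\bb G}SH_{S^1}(k)$ (without invoking Lemma~\ref{nesv}), gives $A_{0,Y}\in SH^{eff}(k)$.

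The main obstacle is precisely the homotopy colimit presentation used at the start of the argument. This is a general feature of Hovey's stable model categories of $T$-spectra: the left adjoint $F_n:Pre^\Sigma(Sm/k)\to Pre^{\Sigma,\bb G}(Sm/k)$ to evaluation at level $n$ satisfies $F_nY\simeq\Sigma^{-n}_{\bb G}\Sigma^\infty_{\bb G}Y$ in the stable category, and the canonical maps $F_nX_n\to X$ assemble into a sequence whose homotopy colimit recovers $X$. If one wishes to bypass this and work directly with the bispectrum definition, one can instead verify the required identification levelwise by computing stable homotopy groups after evaluation at each $X\in Sm/k$ and using the Nisnevich/motivic fibrant models of $A_Y$ and $A_{0,Y}$ produced in Theorem~\ref{horosho}.
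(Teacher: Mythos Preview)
Your argument is correct and is essentially the same as the paper's: the paper presents $A_Y$ as the (homotopy) colimit of its truncations $Tr_iA_Y\simeq\Omega_{\bb G}^i(\Sigma_{\bb G}^\infty A_Y(i))^f$, which is exactly your $\Sigma_{\bb G}^{-i}\Sigma_{\bb G}^\infty A_Y(i)$, and then uses Lemma~\ref{nesv} to place each truncation in $SH^{eff}(k)$. One small wording issue: membership $A_Y(n)\in\Sigma^n_{\bb G}SH_{S^1}(k)$ does not a priori give $A_Y(n)\simeq\Sigma^n_{\bb G}B_n$ in $SH_{S^1}(k)$, but you do not actually need this---since $\Sigma^\infty_{\bb G}$ preserves colimits it carries $\Sigma^n_{\bb G}SH_{S^1}(k)$ into $\Sigma^n_{\bb G}SH^{eff}(k)$, and then applying the invertible $\Sigma^{-n}_{\bb G}$ in $SH(k)$ lands in $SH^{eff}(k)$, which is the step the paper records.
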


\begin{proof}
We prove the assertion for $A_Y$, because the same arguments will
hold for $A_{0,Y}$. It is shown similar to~\cite[A.33]{PPO} that
every bispectrum $\cc E$ is the colimit of a natural sequence
   $$Tr_0\cc E\to Tr_1\cc E\to\cdots,$$
where $Tr_i\cc E$ stands fort the $i$th truncation. Moreover,
$Tr_i\cc E$ is naturally stably equivalent to $\Omega_{\bb
G}^i((\Sigma_{\bb G}^\infty E_i)^f)$ (``$f$" for fibrant
resolution).

Lemma~\ref{nesv} implies $(\Sigma_{\bb G}^\infty
A_Y(i))^f\in\Sigma^i_{\bb G}SH^{eff}(k)$, and hence $\Omega^i_{\bb
G}((\Sigma_{\bb G}^\infty A_Y(i))^f)\in SH^{eff}(k)$. We see that
$A_Y$ is the colimit of the sequence
   $$Tr_0 A_Y\to Tr_1 A_Y\to\cdots,$$
where each $Tr_i A_Y$ is in $SH^{eff}(k)$. It follows
from~\cite[A.34]{PPO} that $A_Y$ is isomorphic in $SH(k)$ to the
homotopy colimit of $Tr_i A_Y$-s, which is in $SH^{eff}(k)$. We
conclude that $A_Y\in SH^{eff}(k)$.
\end{proof}

\section{The bispectrum $KGL_{\cc A,Y}$}\label{kglsp}

Given an additive category $\cc M$, Quillen defines a new category
$S^{-1}S\cc M$ whose objects are pairs $(A,B)$ of objects of $\cc
M$. A morphism $(A,B)\to(C,D)$ in $S^{-1}S\cc M$ is given by a pair
of split monomorphisms
   $$f:A\bl{\twoheadleftarrow}\rightarrowtail C,\quad g:B\bl\twoheadleftarrow\rightarrowtail D$$
together with an isomorphism $h:\coker f\to\coker g$. By a split
monomorphism we mean a monomorphism together with a chosen
splitting. The nerve of the category $S^{-1}S\cc M$ which is also
denoted by $S^{-1}S\cc M$ is homotopy equivalent to Quillen's
$K$-theory space of $\cc M$ by~\cite{Gr1}.

The set $S^{-1}S_k\cc M$ of $k$-simplices of the category
$S^{-1}S\cc M$ can be regarded as the set of objects of an additive
category in the usual way, and we use exactly the same notation to
denote that category. In this way $S^{-1}S\cc M$ becomes a
simplicial additive category. Its symmetric Grayson $K$-theory
spectrum will be denoted by $K^{Gr}(S^{-1}S\cc M)$. It follows from
the proof of~\cite[9.3]{Gr} that the map $\Ob\cc M\to S^{-1}S\cc M$
sending $M$ to $(M,0)$ induces a homotopy equivalence
   $$S^{\oplus}\cc M\to S^{\oplus}(S^{-1}S)\cc M.$$
Therefore the induced map of symmetric spectra
   $$K^{Gr}(\cc M)\to K^{Gr}(S^{-1}S\cc M)$$
is a stable weak equivalence, which is a level weak equivalence in
positive degrees.

Let $\cc A$ be a bivariant additive category. Then $S^{-1}S\cc A$ is
a simplicial bivariant additive category. For any $Y\in AffSm/k$ we
can form a bispectrum
   $$S^{-1}SA_Y=(S^{-1}SA_Y(0),S^{-1}SA_Y(1),\ldots),$$
where each $S^{-1}SA_Y(n)=K^{Gr}(C^\oplus S^{-1}S\cc A(-,Y\times\bb
G_m^{\wedge n}))$, and define a natural map of bispectra
   $$s:A_Y\to S^{-1}SA_Y.$$
This map is a level stable weak equivalence.

In order to construct $K$-theory spectra with entries being
sectionwise fibrant spaces, we use the category of topological
symmetric spectra $TopSp^\Sigma$ (see~\cite[section~I.1]{Sch}). We
can apply adjoint functors ``geometric realization", denoted by
$|-|$, and ``singular complex", denoted by $\cc S$, levelwise to go
back and forth between simplicial and topological symmetric spectra
   \begin{equation}\label{dwdw}
    |-|:Sp^\Sigma\rightleftarrows TopSp^\Sigma:\cc S.
   \end{equation}

\begin{rem}{\rm
By the standard abuse of notation $|-|$ denotes both the functor
from $Sp^\Sigma$ to $TopSp^\Sigma$ and the realization functor from
simplicial spectra to spectra. It will always be clear from the
context which of either meanings is used.

}\end{rem}

Given a (multisimplicial) additive category $\cc M$, denote by $\hat
K^{Gr}(\cc M)$ the symmetric spectrum $\cc S|K^{Gr}(\cc M)|$. The
unit of the adjunction induces a map of symmetric spectra
   $$K^{Gr}(\cc M)\to\hat K^{Gr}(\cc M),$$
functorial in $\cc M$. Observe that $|K^{Gr}(S^{-1}S\cc M)|$ is an
$\Omega$-spectrum in $TopSp^\Sigma$, and hence so is $\hat
K^{Gr}(S^{-1}S\cc M)$ in $Sp^\Sigma$.

Suppose $\cc A$ is a bivariant additive category satisfying the
property $(\ff{Aut})$. For any $Y\in AffSm/k$ we can form a
bispectrum
   $$S^{-1}S\hat A_Y=(S^{-1}S\hat A_Y(0),S^{-1}S\hat A_Y(1),\ldots),$$
where each $S^{-1}S\hat A_Y(n)=\hat K^{Gr}(C^\oplus S^{-1}S\cc
A(-,Y\times\bb G_m^{\wedge n}))$, and define a natural map of
bispectra
   $$t:S^{-1}SA_Y\to S^{-1}S\hat A_Y.$$
This map is a level stable weak equivalence.

By~\cite[9.4]{Gr} and~\cite[p.~16]{Wlk} there is a natural map of
symmetric topological spectra
   $$|K^{Gr}(C^\oplus S^{-1}S\cc A(X,Y\times\bb G_m^{\wedge 1}))|\to\Omega|K^{Gr}((S^{-1}S)(S^{-1}S)\cc A(X,Y))|.$$
It gives a natural map in $Pre^\Sigma(Sm/k)$
   $$v_1:S^{-1}S\hat A_Y(1)\to\Omega\hat K^{Gr}((S^{-1}S)(S^{-1}S)\cc A(-,Y)).$$
We can get more generally a map (see~\cite[p.~16]{Wlk} as
well)\footnotesize
   $$v_n:S^{-1}S\hat A_Y(n)\to\Omega\hat K^{Gr}(C^\oplus(S^{-1}S)(S^{-1}S)\cc A(-,Y\times\bb G^{\wedge n-1}_m))\to
     \cdots\to\Omega^n\hat K^{Gr}((S^{-1}S)^{n+1}\cc A(-,Y)).$$
\normalsize One sets
   $$\kappa_0:=\Omega_{\bb G}(v_1)\circ\hat a_0:\hat K^{Gr}(S^{-1}S\cc A(-,Y))\to\Omega_{\bb G}\Omega \hat K^{Gr}((S^{-1}S)^2\cc A(-,Y)),$$
where $\hat a_0:\hat A_Y(0)\to\Omega_{\bb G}\hat A_Y(1)$ is the
structure map. Applying the above construction to the
multisimplicial bivariant additive category $(S^{-1}S)^n\cc A$, we
get a map
   $$\kappa_n:\Omega^n\hat K^{Gr}((S^{-1}S)^{n+1}\cc A(-,Y))\to\Omega_{\bb G}\Omega^{n+1}\hat K^{Gr}((S^{-1}S)^{n+2}\cc A(-,Y)).$$

\begin{defs}{\rm
Let $\cc A$ be a bivariant additive category with
$(\ff{Aut})$-property and let $Y\in AffSm/k$. Then the bispectrum
$KGL_{\cc A,Y}$ is defined by the sequence in $Pre^\Sigma(Sm/k)$
   $$(\hat K^{Gr}((S^{-1}S)\cc A(-,Y)),\Omega\hat K^{Gr}((S^{-1}S)^2\cc A(-,Y)),
      \Omega^2\hat K^{Gr}((S^{-1}S)^3\cc A(-,Y)),\ldots).$$
Its structure maps are given by the maps $\kappa_n$-s.

}\end{defs}

The maps $v_n$-s determine a map of bispectra
   $$v:S^{-1}S\hat A_Y\to KGL_{\cc A,Y}.$$
So we have a map of bispectra
   \begin{equation}\label{tarara}
    \chi:=v\circ t\circ s:A_Y\to KGL_{\cc A,Y}.
   \end{equation}
In the next section we shall work with the bispectrum $KGL_{\cc
A,\spec k}$ for a certain bivariant additive category $\cc A$. It
will be shown that it represents Quillen's $K$-theory of algebraic
varieties.

\section{Comparing Grayson's and slice towers for $KGL$}

In this section we prove the main results of the paper. For
technical reasons we have dealt with general bivariant additive
categories so far. Below a concrete example of such a category $\cc
A$ is given. It will lead to solutions for the problems mentioned in
the introduction. Its definition is extracted from~\cite{GP1}. We
start with preparations.

Let $U,X\in Sm/k$. Define $\text{Supp}(U\times X/X)$ as the set of
all closed subsets in $U\times X$ of the form $A=\cup A_i$, where
each $A_i$ is a closed irreducible subset in $U\times X$ which is
finite and surjective over $U$. The empty subset $\empty$ in
$U\times X$ is also regarded as an element of $\text{Supp}(U\times
X/X)$.

Given $U,X \in Sm/k$ and $A\in\text{Supp}(U\times X/X)$, let
$I_A\subset\mathcal O_{U \times X}$ be the ideal sheaf of the closed
set $A \subset U \times X$. Denote by $A_m$ the closed subscheme in
$U \times X$ of the form $(A,\mathcal O_{U \times X}/I^m_A)$. If
$m=0$, then $A_m$ is the empty subscheme. Define
$\text{SubSch}(U\times X/X)$ as the set of all closed subschemes in
$U \times X$ of the form $A_m$.

For any $Z\in\text{SubSch}(U\times X/X)$ we write $p^Z_U:Z\to U$ to
denote $p\circ i$, where $i:Z\hookrightarrow U\times X$ is the
closed embedding and $p: U \times X \to U$ is the projection. If
there is no likelihood of confusion we shall write $p_U$ instead of
$p^Z_U$, dropping $Z$ from notation.

Clearly, for any $Z \in \text{SubSch}(U\times X/X)$ the reduced
scheme $Z^{red}$, regarded as a closed subset of $U\times X$,
belongs to $\text{Supp}(U\times X/X)$.

For any $U,X\in Sm/k$ we define objects of $\cc A(U,X)$ as
equivalence classes for the triples
   $$(n,Z,\phi:p_{U,*}(\cc O_Z)\to M_n(\cc O_U)),$$
where $n$ is a nonnegative integer, $Z \in \text{SubSch}(U\times
X/X)$ and $\phi$ is a non-unital homomorphism of sheaves of $\cc
O_U$-algebras. Let $p(\phi)$ be the idempotent $\phi(1)\in
M_n(\Gamma(U,\cc O_U))$, then $P(\phi)=\im(p(\phi))$ can be regarded
as a $p_{U,*}(\cc O_Z)$-module by means of $\phi$.

By definition, two triples $(n,Z,\phi)$, $(n',Z',\phi')$ are
equivalent if $n=n'$ and there is a triple $(n'',Z'',\phi'')$ such
that $n=n'=n''$, $Z,Z'\subset Z''$ are closed subschemes in $Z''$,
and the diagrams
   \begin{equation*}
    \xymatrix{p_{U,*}(\cc O_Z)\ar[rr]^\phi&&M_n(\cc O_U)&p_{U,*}'(\cc O_{Z'})\ar[rr]^{\phi'}&&M_n(\cc O_U)\\
              &p_{U,*}(\cc O_{Z''})\ar[ul]^{can}\ar[ur]_{\phi''}&&&p_{U,*}(\cc O_{Z''})\ar[ul]^{can}\ar[ur]_{\phi''}}
   \end{equation*}
are commutative. We shall often denote an equivalence class for the
triples by $\Phi$. Though $Z$ is not uniquely defined by $\Phi$,
nevertheless we shall also refer to $Z\subset U\times X$ as the {\it
support\/} of $\Phi$.

Given $\Phi,\Phi'\in\cc A(U,X)$ we first equalize supports $Z,Z'$ of
the objects $\Phi,\Phi'$ and then set
   $$\Hom_{\cc A(U,X)}(\Phi,\Phi')=\Hom_{p_{U,*}(\cc O_Z)}(P(\phi),P(\phi')).$$
Given any three objects $\Phi,\Phi', \Phi'' \in\cc A(U,X)$ a
composition law
   $$\Hom_{\cc A(U,X)}(\Phi,\Phi') \circ \Hom_{\cc A(U,X)}(\Phi',\Phi'') \to \Hom_{\cc A(U,X)}(\Phi,\Phi'')$$
is defined in the obvious way. This makes therefore $\cc A(U,X)$ an
additive category. The zero object is the equivalence class of the
triple $(0, \emptyset, 0)$. By definition,\footnotesize
   $$\Phi_1\oplus\Phi_2=(n_1+n_2,Z_1\cup Z_2,p_{U,*}(\cc O_{Z_1\cup Z_2})\to p_{U,*}(\cc O_{Z_1})\times p_{U,*}(\cc O_{Z_2})
     \to M_{n_1}(\cc O_U))\times M_{n_2}(\cc O_U))\hookrightarrow M_{n_1+n_2}(\cc O_U)).$$
\normalsize Clearly, $P(\phi_1\oplus\phi_2)\cong P(\phi_1)\oplus
P(\phi_2)$.

If $f: X^{\prime} \to X$ and $g: Y \to Y^{\prime}$ are in $Sm/k$,
then following~\cite[4.13]{GP1} and~\cite[4.14]{GP1} set
\begin{equation*}
\label{A(fg)}
\cc A(f,g)=f^* \circ g_*=g_*\circ f^*: \cc A(X,Y) \to \cc A(X^{\prime},Y^{\prime}).
\end{equation*}
By~\cite[4.14; 4.12]{GP1} the assignments $(X,Y)\mapsto \cc A(X,Y)$
and $(f,g)\mapsto \cc A(f,g)$ determine a functor
$$(U,X)\in (Sm/k)^{\op}\times Sm/k\mapsto\cc A(U,X)\in AddCats.$$
{\it Throughout this section by $\cc A$ we shall mean this bivariant
additive category}.

Next we shall introduce an action of $AffSm/k$ on $\cc A$ in the
sense of Section~\ref{Bivariant}. Following notation introduced just
below Theorem~4.15 of~\cite{GP1}, we set
\begin{equation*}
\alpha_U:= (1_U)^{\star}: \cc A(X,Y)\to\cc A(X\times U,Y\times U).
\end{equation*}
To check that the assignment $U \mapsto \alpha_U$ defines an action
of $AffSm/k$ on $\cc A$, we need to check commutativity of three
squares from Section~\ref{Bivariant}. Commutativity of the second
and the third squares
%functoriality of $\alpha_U$ in $X$ and $Y$
follow from~\cite[4.17; 4.18]{GP1}. Commutativity of the first
square, that is the equality $\cc A(1_{X\times U},1_Y\times f) \circ
\alpha_U= \cc A(1_{X\times U},g\times 1_U) \circ \alpha_V$, is
exactly commutativity of the diagram from~\cite[4.24]{GP1}. Thus the
assignment $U \mapsto \alpha_U$ defines an action of $AffSm/k$ on
$\cc A$. Below we shall consider the bivariant category $\cc A$
equipped with this specific action of $AffSm/k$.

For any $U,X \in Sm/k$ the bivariant category $\cc A$ produces a
simplicial additive category
   $$d\mapsto\cc A(U\times\Delta^d,X).$$
It is straightforward to check that this simplicial additive
category is a $k[\Delta]$-linear additive category in the sense
of~\cite[p.~158]{Gr}.

By~\cite[4.27; 4.28]{GP1} the bivariant category $\cc A$ also
satisfies the property ($\ff{Aut}$) from Section~\ref{GraysonTower}.

Now we have a spectral category $\bb K$ whose objects are those
of $Sm/k$, i.e. a category enriched over $Sp^\Sigma$. Its morphism
symmetric spectra are of the form
   $$\bb K(U,X)=K^{Gr}(\cc A(U,X)).$$
We refer the reader to~\cite{GP1} for details. One can associate a
ringoid $\bb K_0$ to it. By definition the objects of $\bb K_0$ are
those of $Sm/k$ and
   $$\bb K_0(U,X)=\pi_0(\bb K(U,X)),\quad U,X\in Sm/k.$$

In what follows we shall write $H_{\bb K}^{p,q}(U,\bb Z)$ to denote
$H_{\cc A}^{p,q}(U,\spec k)$. In Remark~\ref{isp} we defined
complexes of presheaves $C^*(A_{0,Y}(q))$ (indexing is
cohomological). We set,
   $$\bb Z^{\bb K}(q):=C^*(A_{0,\spec k}(q)[-q])_{\nis}.$$
It follows from Remark~\ref{isp} that
   $$H_{\bb K}^{p,q}(U,\bb Z)=H^p_{\nis}(U,\bb Z^{\bb K}(q)).$$
We shall also denote by $KGL$ the bispectrum $KGL_{\cc A,\spec k}$.

There is another ringoid which is important in our analysis. Let
$\tilde{\cc P}(U,X)$, $U,X\in Sm/k$, be the additive category of big
coherent $\cc O_{U\times X}$-modules $P$ such that $\supp P$ is
finite over $U$ and the coherent $\cc O_U$-module $(p_U)_*(P)$ is
locally free (see~\cite{FS,GP1,Gr}). We shall write $\tilde{\cc
P}(U)$ to denote $\tilde{\cc P}(U,\spec k)$. Define a ringoid
$K_0^\oplus$ as
   $$K_0^\oplus(U,X)=K_0(\tilde{\cc P}(U,X)),\quad U,X\in Sm/k.$$
Here the right hand side stands for the Grothendieck group of the
additive category $\tilde{\cc P}(U,X)$.

If $X$ is affine then there is a natural additive functor
(see~\cite{GP1} for details)
   $$F_{U,X}:\cc A(U,X)\to\tilde{\cc P}(U,X)$$
which is an equivalence of categories whenever $U$ is affine.
By~\cite{GP1} $F_{U,X}$ is functorial in $U$. These functors can
naturally be extended to a map of ringoids
   $$F:\bb K_0\to K_0^{\oplus}.$$

Given $n\geq 0$ we denote by $\bb Z^{Gr}(n)$ (respectively $\bb
Z(n)$) the Grayson complex~\cite{Sus,Wlk} corresponding to the
ringoid $K_0^\oplus$ (repectively the Suslin--Voevodsky~\cite{SV1}
complex corresponding to the ringoid $Cor$). The complexes are
defined in the same fashion as $\bb Z^{\bb K}(n)$. Recall that
motivic cohomology is defined as
   $$H^{p,q}_{\cc M}(X,\bb Z):=H^p_{\nis}(X,\bb Z(q)).$$

\begin{thm}[Suslin~\cite{Sus}]\label{suslin}
For any $n\geq 0$, the canonical homomorphism of complexes of
Nisnevich sheaves $\bb Z^{Gr}(n)\to\bb Z(n)$ is a quasi-isomorphism.
\end{thm}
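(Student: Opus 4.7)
The plan is to identify the canonical homomorphism $\bb Z^{Gr}(n)\to\bb Z(n)$ as induced by the natural map of ringoids $K_0^\oplus\to Cor$ that sends the class of a finite projective pushforward $[(p_U)_*P]\in K_0^\oplus(U,X)$ to the cycle of its underlying support, counted with multiplicities equal to the generic ranks of $P$. This map is well defined because for $P\in\tilde{\cc P}(U,X)$ the support is finite over $U$ and the pushforward is locally free on $U$, so the data of $(p_U)_*P$ with its $\cc O_{U\times X}$-action encodes both the cycle and its scheme structure. After applying the cubical construction on $\bb G_m^{\wedge n}$ and the Suslin complex $C^\bullet$, one obtains the map of Nisnevich complexes in the statement.

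Next I would reduce quasi-isomorphism of complexes of Nisnevich sheaves to the corresponding statement on stalks, i.e.\ to a quasi-isomorphism of complexes of abelian groups after evaluating on the henselization $U$ of a smooth local scheme. In this setting both complexes are explicit: one is the normalized Suslin complex of the multisimplicial abelian group $K_0^\oplus(U\times\Delta^\bullet,\bb G_m^{\wedge n})$, and the other the analogous one for $Cor(U\times\Delta^\bullet,\bb G_m^{\wedge n})$. The map is induced by the cycle-theoretic projection described above.

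The main step is then a d\'evissage by codimension of support, coupled with a moving-lemma argument in the cubical direction. Filtering $K_0^\oplus$-classes by codimension of support, one identifies the associated graded pieces with cycle-theoretic data and shows inductively that two $K_0^\oplus$-classes with the same underlying cycle differ by a boundary in the normalized cubical complex relative to $\bb G_m^{\wedge n}$. The key geometric input is a Nisnevich-local presentation of cycles finite over $U\times\Delta^\bullet$ that persists through the $\bb G_m^{\wedge n}$-cubical degeneracies, using Gabber--Quillen-type presentation results and the henselian hypothesis.

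The hard part will be producing the cubical null-homotopies in the relative setting: one must deform a class in $K_0^\oplus(U\times\Delta^d,\bb G_m^{\wedge n})$ to a cycle-theoretic representative through simplices in $\Delta^\bullet$ while keeping supports finite and surjective over $U\times\Delta^\bullet$ at every stage, and without leaving the cubical subcomplex. This is where smoothness of the base and the henselian property are genuinely used. The full verification is Suslin's~\cite{Sus}; my proof would assemble the reductions above and appeal to that result at the final step.
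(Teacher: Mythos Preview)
The paper does not supply a proof of this theorem at all: it is stated with attribution to Suslin and the citation~\cite{Sus}, and the text moves directly to Corollary~\ref{nashe}. So there is no ``paper's own proof'' to compare against; the paper treats this as a black box imported from the literature.

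Your proposal likewise concludes by deferring to~\cite{Sus} at the final step, so at the level of what is actually established the two are aligned: both cite Suslin. That said, your sketch of the intermediate mechanism is not quite how Suslin's argument runs. The phrases ``d\'evissage by codimension of support'' and ``moving-lemma argument in the cubical direction'' evoke the comparison of motivic cohomology with higher Chow groups rather than the $K_0^\oplus$-versus-$Cor$ comparison. Suslin's actual method introduces the notion of a \emph{rationally contractible} presheaf and shows that the relevant quotient presheaves (arising from the kernel of $K_0^\oplus\to Cor$ after passing to the cubical $\bb G_m^{\wedge n}$-construction) are rationally contractible, which forces the associated Suslin complexes to be acyclic; the key algebraic input is the structure of $K_0$ of semilocal PID's together with a transfer/homotopy argument, not a geometric moving lemma. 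If you intend to reconstruct the proof rather than merely cite it, you should realign the sketch with that machinery; as written, the outline would not assemble into Suslin's argument even granting the reductions you describe.
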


\begin{cor}\label{nashe}
For any $n\geq 0$, the canonical homomorphism of complexes of
Nisnevich sheaves $\bb Z^{\bb K}(n)\to\bb Z^{Gr}(n)$, induced by the
map of ringoids $F:\bb K_0\to K_0^{\oplus}$, is an isomorphism.
Hence, for any smooth scheme $X\in Sm/k$, cohomology groups $H_{\bb
K}^{p,q}(X,\bb Z)$ coincide with motivic cohomology groups
$H^{p,q}_{\cc M}(X,\bb Z)$.
\end{cor}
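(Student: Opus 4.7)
The plan is to exploit the fact that $F_{U,X}\colon\cc A(U,X)\to\tilde{\cc P}(U,X)$ is an equivalence of additive categories whenever $U$ is affine. This is the essential input; everything else is book-keeping about sheafification. By construction $\bb Z^{\bb K}(n)$ is the Nisnevich sheafification of (the Dold--Kan image of) the simplicial abelian presheaf
\[
d\longmapsto K_0^{Gr}\bigl(C^\oplus\cc A(-\times\Delta^d,\bb G_m^{\wedge n})\bigr),
\]
after the cohomological shift indicated in Remark~\ref{isp}, and $\bb Z^{Gr}(n)$ is defined in the completely parallel way with $\cc A$ replaced by $\tilde{\cc P}$ (equivalently, with $\bb K_0$ replaced by $K_0^\oplus$). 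The map of ringoids $F\colon\bb K_0\to K_0^\oplus$ therefore induces a canonical map of simplicial presheaves in each weight and hence the comparison map between the two complexes.

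The first step is to verify that this comparison map is a \emph{levelwise} isomorphism of presheaves on the full subcategory $AffSm/k$. If $U$ is affine then so is $U\times\Delta^d$, and functoriality of $F_{-,X}$ in the first argument, combined with the stated equivalence, yields an equivalence of additive categories $\cc A(U\times\Delta^d,\bb G_m^{\times\ell})\simeq\tilde{\cc P}(U\times\Delta^d,\bb G_m^{\times\ell})$ for each $\ell$. Since $K_0^{Gr}$ is an invariant of equivalences of additive categories and $F$ is compatible with the functors $(i_s)_*$ appearing in~\eqref{ilin} and with the cube construction $C^\oplus$, one obtains a degreewise isomorphism of the two simplicial abelian groups on affines, hence a degreewise isomorphism of the associated Dold--Kan complexes of presheaves on $AffSm/k$.

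The second step is to pass to Nisnevich sheaves. The Nisnevich stalk of any presheaf at a point $x\in X\in Sm/k$ is the filtered colimit of sections over a cofinal system of affine \'etale neighborhoods of $\spec\cc O_{X,x}^h$, each of which is itself an affine smooth $k$-scheme. A presheaf map that is an isomorphism on every affine smooth scheme is therefore an isomorphism on every Nisnevich stalk, hence induces an isomorphism of Nisnevich sheafifications. Applying this in each cohomological degree yields the claimed isomorphism $\bb Z^{\bb K}(n)\xrightarrow{\sim}\bb Z^{Gr}(n)$. The cohomological statement is then immediate: composing with Suslin's quasi-isomorphism $\bb Z^{Gr}(n)\xrightarrow{\sim}\bb Z(n)$ of Theorem~\ref{suslin} and invoking Remark~\ref{isp} gives $H^{p,q}_{\bb K}(X,\bb Z)=H^p_{\nis}(X,\bb Z^{\bb K}(q))\cong H^p_{\nis}(X,\bb Z(q))=H^{p,q}_{\cc M}(X,\bb Z)$.

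The main point requiring care is the compatibility of $F$ with everything that goes into the construction of the complex: the external action of $AffSm/k$ which produces the $\bb G_m$-cube, the functoriality in the first argument through $\Delta^d$, and the passage from $\bb G_m^{\times n}$ to $\bb G_m^{\wedge n}$ via the quotient of~\eqref{ilin}. These are all built into the definition of $F$ as a morphism of bivariant additive categories and into its naturality in $U$, but one must be explicit enough that one genuinely has a map of complexes on the nose before invoking the sheafification principle. Once this is granted, the proof reduces to the elementary observation that equivalences of additive categories induce isomorphisms on $K_0$.
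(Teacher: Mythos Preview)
Your proof is correct and follows essentially the same route as the paper: both use that $F_{U,X}$ is an equivalence of additive categories on affines to conclude that the comparison map is an isomorphism of presheaves over $AffSm/k$, and then pass to Nisnevich sheaves. Your version is more explicit about the stalk argument and about invoking Theorem~\ref{suslin} and Remark~\ref{isp} for the cohomological consequence, but the argument is the same.
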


\begin{proof}
As we have mentioned above, the additive functor $F_{U,X}:\cc
A(U,X)\to\cc P(U,X)$ is an equivalence whenever $U$ and $X$ are
affine. It follows that the map of simplicial abelian groups
   $$(d\mapsto\bb K_0(U\times\Delta^d,\bb G_m^{\times n}))\to(d\mapsto K_0^\oplus(U\times\Delta^d,\bb G_m^{\times n})),\quad n\geq 0,$$
is an isomorphism for every smooth affine scheme $U$. Hence the map
of simplicial abelian sheaves
   $$(d\mapsto\bb K_0(-\times\Delta^d,\bb G_m^{\times n})_{\nis})\to(d\mapsto K_0^\oplus(-\times\Delta^d,\bb G_m^{\times n})_{\nis})$$
is an isomorphism of motivic spaces. Our assertion now follows.
\end{proof}

\begin{cor}\label{abvgd}
The cohomology groups $H_{\bb K}^{*,*}(X,\bb Z)$ are homotopy
invariant and satisfy the cancelation property.
\end{cor}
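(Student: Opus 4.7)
The plan is to reduce both assertions to known properties of Suslin--Voevodsky motivic cohomology via Corollary~\ref{nashe}, which identifies
$$H^{p,q}_{\bb K}(X,\bb Z)\cong H^{p,q}_{\cc M}(X,\bb Z).$$
Once this identification is in place, the two statements become classical facts about motivic cohomology over a perfect field, so only the compatibility of the relevant structural maps under the isomorphism needs to be checked.

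First I would dispose of homotopy invariance. The complex $\bb Z(q)$ is an $\bb A^1$-local object of $D(NSh)$ by construction (it is defined via the Suslin complex $C^*$ applied to a presheaf of relative $0$-cycles, and the Suslin--Voevodsky theorem shows $C^*$ turns every presheaf with transfers into an $\bb A^1$-homotopy invariant one). Hence $H^{p,q}_{\cc M}(X\times\bb A^1,\bb Z)\to H^{p,q}_{\cc M}(X,\bb Z)$ is an isomorphism, and transporting via Corollary~\ref{nashe} gives homotopy invariance of $H^{p,q}_{\bb K}(-,\bb Z)$.

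The main step is the cancelation property. Here the reference is Voevodsky's cancellation theorem (valid over a perfect field): for every Nisnevich sheaf with transfers $\cc F$, the canonical map $\cc F\to\underline{\Hom}(\bb Z_{tr}(\bb G_m),\cc F\otimes\bb Z_{tr}(\bb G_m))$ is a quasi-isomorphism in $DM^{eff}_-(k)$. Applied to $\cc F=\bb Z(q)$ and combined with the shuffle $\bb Z(q)\otimes\bb Z(1)\simeq\bb Z(q+1)[1]\otimes\bb G_m^{-}$, this yields
$$H^{p,q}_{\cc M}(X,\bb Z)\xrightarrow{\;\cong\;}H^{p+1,q+1}_{\cc M}(X\wedge\bb G_m,\bb Z).$$
The hard part, which I expect to be the main obstacle, is verifying that the map $\beta^{p,q}$ appearing in the definition of the cancelation property (induced by the structural map~\eqref{hook} of the bispectrum $A_{\spec k}$) corresponds, under the identification of Corollary~\ref{nashe} together with Suslin's quasi-isomorphism $\bb Z^{Gr}(n)\to\bb Z(n)$ of Theorem~\ref{suslin}, to the cancellation map featured in Voevodsky's theorem. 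This is a compatibility-of-two-constructions argument: the Grayson complexes $\bb Z^{Gr}(q)$ carry a Gysin-type suspension map inherited from the multiplication action of $\bb G_m$ on the commuting-automorphisms description in the property $(\ff{Aut})$, and on the level of ringoids this map agrees with the tensor product by $\bb G_m$ under $F:\bb K_0\to K_0^\oplus\to Cor$. Once this agreement is established componentwise, the map $\beta^{p,q}$ becomes the cancellation isomorphism for motivic cohomology, and the corollary follows.
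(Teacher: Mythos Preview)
Your proposal is correct in spirit and reaches the same conclusion, but it takes a longer detour than the paper does. Both arguments start from Corollary~\ref{nashe}, which identifies $\bb Z^{\bb K}(n)$ with the Grayson complex $\bb Z^{Gr}(n)$. The paper then stops at the Grayson level and simply invokes Suslin~\cite[3.1; 4.13]{Sus}, where homotopy invariance and the cancellation map for the Grayson motivic complexes are established directly; since the map of ringoids $F:\bb K_0\to K_0^\oplus$ is compatible by construction with the $\bb G_m$-action defining $\beta$, no further compatibility check is needed. You instead push one step further along Theorem~\ref{suslin} to Voevodsky's $\bb Z(n)$ and appeal to the cancellation theorem in $DM^{eff}_-(k)$, which then forces you to verify that the bispectrum structure map $\beta^{p,q}$ matches Voevodsky's cancellation map through two comparison maps rather than one. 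This is doable but is exactly the extra work the paper avoids by citing Suslin's results, which are already phrased for the commuting-automorphisms picture. In short: same reduction via Corollary~\ref{nashe}, but the paper cites the cancellation result closest to the native setting, while your route passes through $DM$ and incurs the compatibility obligation you correctly flagged as the ``hard part''.
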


\begin{proof}
This follows from~\cite[3.1; 4.13]{Sus} and Corollary~\ref{nashe}.
\end{proof}

\begin{cor}\label{efgh}
Let $k$ be a perfect field. Then $A_{0,\spec k}(n)=s_n(A_{0,\spec
k}(n))$ for each $n\geq 0$.
\end{cor}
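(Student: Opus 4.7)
The plan is to identify $A_{0,\spec k}(n)$ in $SH_{S^1}(k)$ with $\Sigma_s^n EM(\bb Z(n))$ and then invoke Proposition~\ref{kahn} together with the closure of $\Sigma^n_{\bb G}SH_{S^1}(k)$ under the shift $\Sigma_s$.

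First I would set up the comparison at the level of presheaves of spectra. By Corollary~\ref{abvgd} the groups $H^{p,q}_{\cc A}(-,\spec k)$ are homotopy invariant, so repeating the argument of Corollary~\ref{syak} for $A_{0,\spec k}(n)$ in place of $A_Y(0)$, the Nisnevich-fibrant replacement of $|d\mapsto A_{0,\spec k}(n)(-\times\Delta^d)|$ is already motivically fibrant. By~\cite[3.8]{MV} the canonical map $A_{0,\spec k}(n)\to|d\mapsto A_{0,\spec k}(n)(-\times\Delta^d)|$ is a motivic weak equivalence, so this fibrant replacement represents $A_{0,\spec k}(n)$ in $SH_{S^1}(k)$.

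Next I would promote the cohomological identification of Corollary~\ref{nashe} to a motivic identification of spectra. Since $A_{0,\spec k}(n)=EM(K^{Gr}_0(C^\oplus\cc A(-,\bb G_m^{\wedge n})))$ is an Eilenberg--Mac~Lane spectrum on an abelian group presheaf, Dold--Kan identifies $|d\mapsto A_{0,\spec k}(n)(-\times\Delta^d)|$ with the Eilenberg--Mac~Lane spectrum of the complex $C^*(A_{0,\spec k}(n))$ from Remark~\ref{isp}. Remark~\ref{isp} together with Corollary~\ref{nashe} yield a quasi-isomorphism $C^*(A_{0,\spec k}(n))_{\nis}\simeq\bb Z(n)[n]$ in $D(NSh)$, which under Dold--Kan corresponds to a motivic equivalence
\[
A_{0,\spec k}(n)\simeq EM(\bb Z(n)[n])\simeq\Sigma_s^nEM(\bb Z(n))
\]
in $SH_{S^1}(k)$.

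Finally I would conclude by invoking the slice filtration. By Proposition~\ref{kahn}, $EM(\bb Z(n))\in\Sigma^n_{\bb G}SH_{S^1}(k)$; as this is a triangulated localizing subcategory, it is closed under $\Sigma_s$, hence $\Sigma_s^nEM(\bb Z(n))\in\Sigma^n_{\bb G}SH_{S^1}(k)$ and therefore $A_{0,\spec k}(n)\in\Sigma^n_{\bb G}SH_{S^1}(k)$. The only nontrivial step is the second one: promoting the quasi-isomorphism of complexes of Nisnevich sheaves supplied by Corollary~\ref{nashe} to an equivalence of presheaves of Eilenberg--Mac~Lane spectra in $SH_{S^1}(k)$, with the correct $\Sigma_s^n$ shift; once this is in hand, Proposition~\ref{kahn} does the rest.
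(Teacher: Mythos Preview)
Your proposal is correct and follows the same route as the paper: identify $A_{0,\spec k}(n)$ in $SH_{S^1}(k)$ with a shift of $EM(\bb Z(n))$ via the chain of quasi-isomorphisms of Nisnevich complexes, then invoke Proposition~\ref{kahn} and closure of $\Sigma^n_{\bb G}SH_{S^1}(k)$ under $\Sigma_s$. The paper's proof is terser and cites Theorem~\ref{suslin} alongside Corollary~\ref{nashe}; you should add that citation too, since Corollary~\ref{nashe} by itself only gives the isomorphism $\bb Z^{\bb K}(n)\cong\bb Z^{Gr}(n)$, and it is Suslin's theorem that carries you the rest of the way to $\bb Z(n)$.
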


\begin{proof}
This follows from Proposition~\ref{kahn}, Theorem~\ref{suslin} and
Corollary~\ref{nashe}.
\end{proof}

By definition, by the $K$-theory of $X$ we shall mean the Waldhausen
algebraic $K$-theory symmetric spectrum of big vector bundles (regarded as
an exact category)
   $$K(X)=K(\tilde{\cc P}(X)).$$
We set $G_X:=F_{X,\spec k}$. Observe that $G_X$ is functorial in
$X$. So we get a map in $Pre^\Sigma(Sm/k)$
   $$G:K^{Gr}(\cc A(-,\spec k))\to K(-),$$
where the left hand side spectrum is defined on p.~\pageref{KGr}.

\begin{prop}\label{kukuku}
$G$ is a Nisnevich local weak equivalence and induces canonical
isomorphisms
   $$K_p^{\cc A}(X,\spec k)\cong K_p(X),$$
for any smooth scheme $X$ and any integer $p$, where the left hand
side group is defined on p.~\pageref{KpA}.
\end{prop}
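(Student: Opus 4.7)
The plan is to deduce both assertions from two standard inputs about Quillen $K$-theory (Nisnevich descent and $\bb A^1$-homotopy invariance on regular schemes) together with the fact, built into the construction of $F_{U,X}$, that $F_{U,\spec k}$ is an equivalence of additive categories whenever $U$ is affine.

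First I would verify that $G$ is a Nisnevich local weak equivalence by checking it stalkwise. A Nisnevich stalk at a smooth point is the spectrum of a henselian local $k$-algebra, and is canonically a cofiltered limit of smooth affine $k$-schemes $U_\lambda$. On each such $U_\lambda$ the functor $F_{U_\lambda,\spec k}$ is an equivalence of additive categories, so $K^{Gr}(F_{U_\lambda,\spec k})$ is a level, hence stable, weak equivalence. Since the $S^\oplus$-construction commutes with filtered colimits of additive categories, passing to the limit yields the claim.

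Next I would identify $K_p^{\cc A}(X,\spec k)$ with $K_p(X)$ using the framework already developed. By Corollary~\ref{abvgd} the presheaves $H^{p,q}_{\bb K}(-,\bb Z)$ are homotopy invariant, so Corollary~\ref{syak} guarantees that any Nisnevich local fibrant replacement $\wt M$ of $|d\mapsto K^{Gr}(\cc A(-\times\Delta^d,\spec k))|$ is automatically motivically fibrant in $Pre_{mot}^\Sigma(Sm/k)$. Applying the first step degreewise in the simplicial direction, the map $G$ induces a Nisnevich local weak equivalence
\[
|d\mapsto K^{Gr}(\cc A(-\times\Delta^d,\spec k))|\longrightarrow|d\mapsto K(-\times\Delta^d)|,
\]
so $\wt M$ is equally a Nisnevich local fibrant replacement of the target.

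Finally I would invoke Thomason--Trobaugh Nisnevich descent for Quillen $K$-theory together with Quillen's $\bb A^1$-homotopy invariance of $K$ on regular schemes. These imply that $K(-)$ is already motivically fibrant on $Sm/k$ and that the canonical map $K(-)\to|d\mapsto K(-\times\Delta^d)|$ is a sectionwise stable equivalence. It follows that $\wt M\simeq K(-)$ in $\Ho(Pre^\Sigma_{nis}(Sm/k))$ and hence
\[
K_p^{\cc A}(X,\spec k)=[X_+,\Sigma_s^{-p}\wt M]=\pi_p K(X)=K_p(X)
\]
for every $X\in Sm/k$, with the isomorphism induced by $G$. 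The main obstacle I foresee is the bookkeeping around the interaction of Nisnevich local fibrant replacement, the $\Delta$-realization and $\bb A^1$-invariance of $K$; once that is under control, every remaining step is either a cited input or a direct application of the equivalence $F_{U,\spec k}$ for affine $U$.
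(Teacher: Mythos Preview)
Your proof is correct and follows essentially the same route as the paper: both use that $F_{U,\spec k}$ is an equivalence on affines to get the Nisnevich local weak equivalence, then combine $\bb A^1$-invariance of $K$ on regular schemes with Thomason--Trobaugh Nisnevich descent to identify the Nisnevich-local fibrant replacement of $|d\mapsto K^{Gr}(\cc A(-\times\Delta^d,\spec k))|$ with $K(-)$. The paper's argument is marginally more direct---it does not pass through Corollary~\ref{syak} (motivic fibrancy of $\wt M$ is not needed here, only Nisnevich fibrancy) and it obtains the Nisnevich local equivalence immediately from the affine case rather than via a stalk/filtered-colimit argument---but the substance is identical.
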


\begin{proof}
The fact that $G$ is a Nisnevich local weak equivalence follows from
the fact that $G_X$ is an equivalence of categories whenever $X$ is
affine. So we also have that
   $$K^{Gr}(\cc A(-\times\Delta^d,\spec k))\to K(-\times\Delta^d),\quad d\geq 0,$$
is a Nisnevich local weak equivalence in $Pre^\Sigma(Sm/k)$.

Consider a commutative diagram in $Pre^\Sigma(Sm/k)$
   \begin{equation}\label{hop}
   \xymatrix{
    K^{Gr}(\cc A(-,\spec k))\ar[r]\ar[d]_G&|K^{Gr}(\cc A(-\times\Delta^.,\spec k))|\ar[r]\ar[d]&|K^{Gr}(\cc A(-\times\Delta^.,\spec k))|_f\ar[d]^\gamma\\
    K(-)\ar[r]^\alpha&|K(-\times\Delta^.)|\ar[r]^\beta&|K(-\times\Delta^.)|_f}
   \end{equation}
Here the lower $f$-symbol refers to a fibrant replacement functor in
$Pre^\Sigma_{nis}(Sm/k)$. The vertical arrows are Nisnevich local
weak equivalences. The left horizontal arrows are motivic weak
equivalences by~\cite[3.8]{MV}.

Since $K(-)$ is homotopy invariant, then $\alpha$ is a stable weak
equivalence. By~\cite{T} $K(-)$ is Nisnevich excisive, and hence
$\beta,\gamma$ are stable weak equivalences. It remains to observe
that
   $$K_p^{\cc A}(X,\spec k)=\pi_p(|K^{Gr}(\cc A(-\times\Delta^.,\spec k))|_f(X))$$
for any $X\in Sm/k$.
\end{proof}

\begin{cor}\label{vladik}
Let $K(-)\to\tilde {K}(-)$ be any fibrant replacement of $K(-)$ in
the stable projective model structure of $Pre^\Sigma(Sm/k)$. Then
the composite map
   $$K^{Gr}(\cc A(-,\spec k))\bl G\to K(-)\to\tilde {K}(-)$$
is a motivic fibrant replacement of $K^{Gr}(\cc A(-,\spec k))$ in
$Pre^\Sigma_{mot}(Sm/k)$.
\end{cor}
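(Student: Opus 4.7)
The plan is to verify the two defining conditions of a motivic fibrant replacement in $Pre^\Sigma_{mot}(Sm/k)$: that the composite is a motivic weak equivalence, and that $\tilde K(-)$ is motivically fibrant.

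First I would check the weak-equivalence part. By Proposition~\ref{kukuku} the map $G:K^{Gr}(\cc A(-,\spec k))\to K(-)$ is a Nisnevich local weak equivalence, hence in particular a motivic weak equivalence. The map $K(-)\to\tilde K(-)$ is a stable projective weak equivalence by the choice of $\tilde K(-)$, hence a sectionwise stable equivalence, hence again a local and motivic weak equivalence. Two-out-of-three gives that the composite is a motivic weak equivalence.

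Next I would show that $\tilde K(-)$ is motivically fibrant. Unwinding the definitions of Section~3, this amounts to three conditions: sectionwise stably fibrant, Nisnevich local, and $\bb A^1$-local. Sectionwise fibrancy is built into being a stable projective fibrant replacement. For Nisnevich locality I would invoke Thomason's Nisnevich descent theorem~\cite{T} (already used in diagram~\eqref{hop} to see that $\beta,\gamma$ are stable weak equivalences): $K(-)$ turns each elementary distinguished square into a homotopy pullback square, and since $K(-)\to\tilde K(-)$ is a sectionwise stable equivalence with $\tilde K(-)$ sectionwise stably fibrant, the same property passes to $\tilde K(-)$. For $\bb A^1$-locality I would use the classical $\bb A^1$-homotopy invariance of Quillen's $K$-theory on smooth schemes, $K(X)\simeq K(X\times\bb A^1)$, which likewise transfers to $\tilde K(-)$ along the same sectionwise equivalence.

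A tidier, entirely in-house alternative is to read the conclusion off diagram~\eqref{hop}. Corollary~\ref{abvgd} verifies the hypotheses of Corollary~\ref{syak}, so the upper-right term $|K^{Gr}(\cc A(-\times\Delta^{\bullet},\spec k))|_f$ is already motivically fibrant; the map $\gamma$ in~\eqref{hop} is a sectionwise stable weak equivalence of sectionwise fibrant presheaves, so $|K(-\times\Delta^{\bullet})|_f$ is motivically fibrant as well. The maps $\alpha$ and $\beta$ in~\eqref{hop} are sectionwise stable equivalences (by $\bb A^1$-invariance and Nisnevich excision of $K$), so $\tilde K(-)$ is sectionwise stably equivalent to $|K(-\times\Delta^{\bullet})|_f$ and therefore inherits motivic fibrancy. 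The only delicate point in either route is the bookkeeping that ensures motivic fibrancy genuinely transfers across the sectionwise stable equivalences involved; this needs all the targets to be sectionwise stably fibrant, which is the only place where one has to be careful.
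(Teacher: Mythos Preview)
Your proposal is correct and essentially matches the paper's approach. The paper's proof is just two sentences: ``All maps of diagram~\eqref{hop} are motivic weak equivalences. The proof of the preceding proposition shows that $\tilde K(-)$ is fibrant in $Pre^\Sigma_{mot}(Sm/k)$.'' Your first route unpacks exactly this: the motivic weak equivalence part follows since $G$ is a Nisnevich local equivalence (Proposition~\ref{kukuku}) and the replacement map is a sectionwise stable equivalence; motivic fibrancy of $\tilde K(-)$ comes from the two properties of $K(-)$ already invoked in the proof of Proposition~\ref{kukuku}, namely $\bb A^1$-homotopy invariance and Nisnevich excision~\cite{T}, which pass to $\tilde K(-)$ along the sectionwise stable equivalence.

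Your second route via Corollaries~\ref{syak} and~\ref{abvgd} is also valid but more circuitous than necessary: the paper never needs to argue that the upper-right corner of~\eqref{hop} is motivically fibrant and then transfer fibrancy down and left; it simply reads off the fibrancy of $\tilde K(-)$ directly from the classical properties of algebraic $K$-theory. So your first route is the intended one, and your caveat about the bookkeeping (that the transfer of locality along sectionwise stable equivalences requires the target to be sectionwise stably fibrant) is the right thing to flag.
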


\begin{proof}
All maps of diagram~\eqref{hop} are motivic weak equivalences. The
proof of the preceding proposition shows that $\tilde{K}(-)$ is
fibrant in $Pre^\Sigma_{mot}(Sm/k)$.
\end{proof}

We are now in a position to prove the following

\begin{thm}\label{grs1tower}
Let $k$ be a perfect field. Then the Grayson tower~\eqref{polez} of
$S^1$-spectra in $SH_{S^1}(k)$
    $$\cdots\to\Sigma_s^{q+1}A_{\spec k}^\Delta(q+1)\to\Sigma_s^qA^\Delta_{\spec k}(q)\to\cdots\to A^\Delta_{\spec k}$$
is isomorphic to the tower
    $$\cdots\to f_{q+1}(K(-))\to f_q(K(-))\to\cdots\to f_0(K(-)).$$
Moreover, $s_q(K(-))=EM(\bb Z(q))$ for every $q\geq 0$.
\end{thm}

\begin{proof}
This is a consequence of Theorems~\ref{grslice}, \ref{grslices},
\ref{suslin}, Corollaries~\ref{nashe}, \ref{abvgd}, \ref{efgh} and
Proposition~\ref{kukuku}.
\end{proof}

The next theorem says that the bispectrum $KGL$ represents algebraic
$K$-theory.

\begin{thm}\label{uhuhuh}
For any smooth scheme $X$ one has canonical isomorphisms
   $$KGL^{p,q}(X_+)=SH(k)(\Sigma^\infty_{\bb G}\Sigma_s^\infty X_+,\Sigma_s^{p-q}\Sigma_{\bb G}^qKGL)\cong K_{2q-p}(X),$$
where $K(X)$ is algebraic $K$-theory of $X$.
\end{thm}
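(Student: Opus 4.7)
The plan is to identify $KGL$ with the bispectrum $A_{\spec k}$ in $SH(k)$, apply the fibrancy theorems of Section~4 to extract the structure of $\wt A_{\spec k}^\Delta$, and then use Bott periodicity for algebraic $K$-theory to conclude.

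First I would verify that the map $\chi=v\circ t\circ s\colon A_{\spec k}\to KGL_{\cc A,\spec k}$ from~\eqref{tarara} is a motivic weak equivalence of bispectra. The maps $s$ and $t$ are already noted in Section~6 to be level stable weak equivalences, so it suffices to verify this for $v$; each of its components $v_n$ is derived from Grayson's adjoint delooping theorem \cite[9.4]{Gr} and is a stable weak equivalence of symmetric spectra.

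Second, I specialize Section~4 to $\cc A$ with $Y=\spec k$. By Corollary~\ref{abvgd}, $H_{\bb K}^{*,*}(-,\bb Z)$ is homotopy invariant and satisfies the cancelation property, so Theorem~\ref{horosho} applies: the bispectrum $\wt A_{\spec k}^\Delta$ is motivically fibrant in $Pre_{mot}^{\Sigma,\bb G}(Sm/k)$ and the canonical map $A_{\spec k}\to\wt A_{\spec k}^\Delta$ is a motivic weak equivalence. Combined with the previous step, this yields $KGL\simeq\wt A_{\spec k}^\Delta$ in $SH(k)$. In addition, Theorem~\ref{novgorod} ensures that $\wt A_{\spec k}^\Delta$ is an $\Omega_{\bb G}$-spectrum, while Corollary~\ref{vladik} identifies $(\wt A_{\spec k}^\Delta)_0$ with a motivic fibrant replacement $\tilde K(-)$ of Thomason's algebraic $K$-theory presheaf.

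Third, using the $(\Sigma_{\bb G}^\infty,\Omega_{\bb G}^\infty)$-adjunction and the motivic fibrancy of $\wt A_{\spec k}^\Delta$, one rewrites
\[
KGL^{p,q}(X_+)=SH(k)(\Sigma_{\bb G}^\infty\Sigma_s^\infty X_+,\Sigma_s^{p-q}\Sigma_{\bb G}^q\wt A_{\spec k}^\Delta)\cong[X_+,\Sigma_s^{p-q}(\wt A_{\spec k}^\Delta)_q].
\]
The main obstacle will then be to identify this last group with $K_{2q-p}(X)$. This amounts to a Bott-type equivalence $(\wt A_{\spec k}^\Delta)_q\simeq\Sigma_s^{-q}\tilde K(-)$ in $\Ho(Pre^\Sigma_{nis}(Sm/k))$, established by induction using the cancelation equivalences $(\wt A_{\spec k}^\Delta)_q\simeq\Omega_{\bb G}(\wt A_{\spec k}^\Delta)_{q+1}$ (Theorem~\ref{novgorod}) together with Bass's fundamental theorem $\Omega_{\bb G}\tilde K(-)\simeq\Sigma_s\tilde K(-)$ for non-connective algebraic $K$-theory (available via Thomason~\cite{T}). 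With this identification in hand, the computation concludes via $[X_+,\Sigma_s^{p-q}(\wt A_{\spec k}^\Delta)_q]=[X_+,\Sigma_s^{p-2q}\tilde K(-)]=\pi_{2q-p}(\tilde K(X))=K_{2q-p}(X)$.
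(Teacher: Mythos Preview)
There is a genuine gap in Step~1. The maps $v_n\colon S^{-1}S\hat A_{\spec k}(n)\to\Omega^n\hat K^{Gr}((S^{-1}S)^{n+1}\cc A(-,\spec k))$ are \emph{not} stable weak equivalences of presheaves of spectra. Grayson's~\cite[9.4]{Gr} only \emph{constructs} the map; it is~\cite[9.6]{Gr} that identifies its homotopy cofiber, and that cofiber is (after $\Delta$ and Nisnevich-local replacement) the Eilenberg--Mac~Lane type object $\Omega(\wt A_{0,\spec k}^\Delta)_0$, which is nonzero: for instance $\pi_{-1}$ on a connected $X$ is $H^{0,0}_{\cc M}(X,\bb Z)=\bb Z$. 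So $\chi$ is not a level motivic equivalence. The paper's proof confronts exactly this obstruction: it factors the structure map $\rho_0$ of $KGL^\Delta_f$ as the cancelation equivalence followed by $\Omega_{\bb G}(v_1^\Delta)_f$, and then kills the cofiber \emph{only after} applying $\Omega_{\bb G}$, using the vanishing $H^{p,0}_{\cc M}(X\wedge\bb G_m,\bb Z)=0$ from~\cite[4.2]{VoeAppr}. That vanishing is the computational heart of the argument and is entirely absent from your proposal.

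The same gap reappears in Step~5. Your inductive identification $(\wt A_{\spec k}^\Delta)_q\simeq\Sigma_s^{-q}\tilde K$ cannot be extracted from the cancelation equivalence $(\wt A_{\spec k}^\Delta)_q\simeq\Omega_{\bb G}(\wt A_{\spec k}^\Delta)_{q+1}$ together with $\Omega_{\bb G}\tilde K\simeq\Sigma_s\tilde K$: knowing that two objects have equivalent $\Omega_{\bb G}$ does not make them equivalent, since $\Omega_{\bb G}$ is not conservative on $SH_{S^1}(k)$. The only natural comparison map between $(\wt A_{\spec k}^\Delta)_q$ and $\Sigma_s^{-q}\tilde K$ is (a model of) $v_q$, whose cofiber is again nonzero. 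The paper sidesteps this by working with $KGL^\Delta_f$ rather than $\wt A_{\spec k}^\Delta$: once the $\rho_n$ are shown to be equivalences, $KGL^\Delta_f$ is itself motivically fibrant with $q$th level already identified as $\Omega^q\tilde K$ via Corollary~\ref{vladik} and~\cite[9.3]{Gr}, and the computation of $KGL^{p,q}(X_+)$ is immediate.
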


\begin{proof}
Given a bispectrum $X$, let $X^\Delta$ be the bispectrum
$(|X_0(-\times\Delta^.)|,|X_1(-\times\Delta^.)|,\ldots)$. Taking a
fibrant replacement of $X^\Delta$ in the level Nisnevich local model
structure of $Pre^{\Sigma,\bb G}(Sm/k)$, we get a bispectrum
$X^\Delta_f$. So one has maps of bispectra
   $$X\to X^\Delta\to X^\Delta_f,$$
where the left arrow is a level motivic weak equivalence
by~\cite[3.8]{MV} and the right arrow is a level Nisnevich local
weak equivalence.

Consider the bispectra $(S^{-1}S\hat A_{\spec k}^\Delta)_f$ and
$KGL^\Delta_f$. Note that the first bispectrum is equivalent to
$\widetilde A_{\spec k}^\Delta$. We claim that each structure map
   $$\rho_n:(KGL^\Delta_f)_n=(KGL_n^\Delta)_f\to\Omega_{\bb G}(KGL_{n+1}^\Delta)_f$$
is a stable weak equivalence in $Pre^\Sigma(Sm/k)$.
Corollary~\ref{vladik} and~\cite[9.3]{Gr} imply each
$(KGL_n^\Delta)_f$ has homotopy type of $\Omega^n\tilde K(-)\in
Pre^\Sigma(Sm/k)$.

By construction, the map $\rho_0$ factors as
   $$S^{-1}S\hat A_{\spec k}(0)^\Delta_f\to\Omega_{\bb G}S^{-1}S\hat A_{\spec k}(1)^\Delta_f\to\Omega_{\bb G}(KGL_{1}^\Delta)_f.$$
Corollary~\ref{abvgd} and the Cancelation Theorem for
$K$-theory~\ref{novgorod} implies the left arrow is a stable weak
equivalence. It follows from~\cite[9.6]{Gr} that a homotopy cofiber
of the right arrow is $\Omega_{\bb G}\Omega(\wt A_{0,\spec
k}^\Delta)_0$. By~Corollary~\ref{nashe} we have
   $$\pi_{p-1}(\Omega_{\bb G}\Omega(\wt A_{0,\spec k}^\Delta)_0(X))\cong H^{p,0}_{\bb K}(X\wedge\bb G_m,\bb Z)\cong H^{p,0}_{\cc M}(X\wedge\bb G_m,\bb Z),\quad X\in Sm/k,p\in\bb Z.$$
The proof of~\cite[4.2]{VoeAppr} implies $H^{p,0}_{\cc M}(X\wedge\bb
G_m,\bb Z)=0$. So $\Omega_{\bb G}(\wt A_{0,\spec k}^\Delta)_0$ is
zero in $\Ho(Pre^\Sigma(Sm/k))$, and hence $\rho_0$ is a stable weak
equivalence. The fact that each $\rho_n$ is a stable weak
equivalence is proved in a similar fashion. The only difference with
$\rho_0$ is that one iterates the $S^{-1}S$-construction at each
step.

We conclude that $KGL^\Delta_f$ is a motivically fibrant bispectrum.
Therefore,
   \begin{gather*}
    KGL^{p,q}(X_+)=SH(k)(\Sigma^\infty_{\bb G}\Sigma_s^\infty X_+,\Sigma_s^{p-q}\Sigma_{\bb G}^qKGL^\Delta_f)
     \cong SH_{S^1}(k)(\Sigma^\infty_{\bb G}\Sigma_s^\infty X_+,\Sigma_s^{p-q}(KGL^\Delta_f)_q)\\
     \cong SH_{S^1}(k)(\Sigma^\infty_{\bb G}\Sigma_s^\infty X_+,\Sigma_s^{p-q}\Omega^q\tilde K(-))\cong K_{2q-p}(X),
   \end{gather*}
as was to be shown.
\end{proof}

\begin{lem}\label{birmingh}
Let $k$ be a perfect field. Then the bispectrum $A_{\spec k}$ is
isomorphic in $SH(k)$ to $f_0(KGL)$.
\end{lem}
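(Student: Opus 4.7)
The plan is to identify $A_{\spec k}$ with the effective cover $f_0(KGL)$ via the canonical map $\chi:A_{\spec k}\to KGL$ of~\eqref{tarara}. The first step is to verify that $A_{\spec k}\in SH^{eff}(k)$ via Lemmas~\ref{nesv} and~\ref{babah}: their hypotheses---homotopy invariance and cancelation of $H^{*,*}_{\cc A}(-,\spec k)$, together with $A_{0,\spec k}(n)\in\Sigma^n_{\bb G}SH_{S^1}(k)$ for every $n\ge 0$---are supplied by Corollaries~\ref{abvgd} and~\ref{efgh}. The universal property of the counit $f_0(KGL)\to KGL$ then produces a unique factorization $\chi=(f_0(KGL)\to KGL)\circ\tilde\chi$ with $\tilde\chi:A_{\spec k}\to f_0(KGL)$, reducing the claim to showing that $\tilde\chi$ is an isomorphism in $SH(k)$.

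To carry this out I will take the cofibre $C$ of $\tilde\chi$ and note that $C\in SH^{eff}(k)$. Since $SH^{eff}(k)$ is the smallest localizing subcategory generated by $\{\Sigma^\infty_{\bb G}\Sigma^\infty_s X_+\mid X\in Sm/k\}$, we have $C=0$ provided that $\chi$ (equivalently $\tilde\chi$, since $f_0(KGL)\to KGL$ is an isomorphism on $SH(k)(E,-)$ for every $E\in SH^{eff}(k)$) induces an isomorphism
$$SH(k)(\Sigma^p_s\Sigma^\infty_{\bb G}\Sigma^\infty_s X_+,A_{\spec k})\longrightarrow SH(k)(\Sigma^p_s\Sigma^\infty_{\bb G}\Sigma^\infty_s X_+,KGL)$$
for every $X\in Sm/k$ and $p\in\bb Z$. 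Both sides are canonically isomorphic to $K_p(X)$: the right-hand side directly by Theorem~\ref{uhuhuh}, and the left-hand side by Theorem~\ref{horosho}, which identifies $A_{\spec k}$ in $SH(k)$ with the motivically fibrant bispectrum $\wt A_{\spec k}^\Delta$. The $\Sigma^\infty_{\bb G}$–evaluation adjunction then reduces the left group to $[X_+,\Sigma^{-p}_s(\wt A_{\spec k}^\Delta)_0]$ in $\Ho(Pre^\Sigma_{nis}(Sm/k))$, which by definition is $K^{\cc A}_p(X,\spec k)$ and equals $K_p(X)$ by Proposition~\ref{kukuku}.

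The main obstacle is the remaining compatibility: under these two identifications the map induced by $\chi$ must become the identity on $K_p(X)$. To check this I will unwind $\chi=v\circ t\circ s$. At level zero, $v_0$ is the identity of $\hat K^{Gr}(S^{-1}S\cc A(-,\spec k))$, while $s_0$ and $t_0$ are stable weak equivalences of presheaves of symmetric spectra. The identification of the source with $K_p(X)$ via Corollary~\ref{vladik}, and that of the target via the proof of Theorem~\ref{uhuhuh}, both factor through these same stable equivalences together with the canonical $\Delta$-replacement that appears throughout Theorem~\ref{horosho}. Hence the two identifications intertwine $\chi_*$ with the identity, and $\tilde\chi$ is the desired isomorphism. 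This compatibility is essentially diagrammatic bookkeeping and is the only non-routine ingredient beyond the results already in hand.
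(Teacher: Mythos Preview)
Your proof is correct and follows essentially the same route as the paper's: establish $A_{\spec k}\in SH^{eff}(k)$ via Lemma~\ref{babah} (with Corollaries~\ref{abvgd} and~\ref{efgh} supplying the hypotheses), factor $\chi$ through $f_0(KGL)$ by the universal property, and then verify that the resulting map is an isomorphism by testing against the compact generators $\Sigma^\infty_{\bb G}\Sigma^\infty_s X_+$. The paper compresses the final step into the single sentence ``Theorem~\ref{uhuhuh} implies $\chi_*$ is an isomorphism'', whereas you spell out the identification of both sides with $K_p(X)$ (via Theorem~\ref{horosho} and Proposition~\ref{kukuku} on the source, Theorem~\ref{uhuhuh} on the target) and the compatibility through the level-zero component of $\chi=v\circ t\circ s$; this extra bookkeeping is exactly what the paper is tacitly invoking from the proof of Theorem~\ref{uhuhuh}.
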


\begin{proof}
It follows from Corollaries~\ref{abvgd}, \ref{efgh} and
Lemma~\ref{babah} that $A_{\spec k}$ is in $SH^{eff}(k)$. Then
map~\eqref{tarara} of bispectra $\chi:A_{\spec k}\to KGL$. factors
as
   $$A_{\spec k}\xrightarrow{\theta} f_0(KGL)\xrightarrow{\zeta} KGL.$$
For any $X\in Sm/k$ and any $p\in\bb Z$ the induced map
   $$\zeta_*:SH(k)(\Sigma^\infty_{\bb G}\Sigma_s^{\infty}X_+,\Sigma_s^{p}f_0(KGL))\to SH(k)(\Sigma^\infty_{\bb G}\Sigma_s^{\infty}X_+,\Sigma_s^{p}KGL)$$
is an isomorphism by construction of $f_0(KGL)$. On the other hand,
Theorem~\ref{uhuhuh} implies the induced map
   $$\chi_*:SH(k)(\Sigma^\infty_{\bb G}\Sigma_s^{\infty}X_+,\Sigma_s^{p}A_{\spec k})\to SH(k)(\Sigma^\infty_{\bb G}\Sigma_s^{\infty}X_+,\Sigma_s^{p}KGL)$$
is an isomorphism, and hence so is
   $$\theta_*:SH(k)(\Sigma^\infty_{\bb G}\Sigma_s^{\infty}X_+,\Sigma_s^{p}A_{\spec k})\to SH(k)(\Sigma^\infty_{\bb G}\Sigma_s^{\infty}X_+,\Sigma_s^{p}f_0(KGL)).$$
Since $\Sigma^\infty_{\bb G}\Sigma_s^{\infty}X_+$ generate the
compactly generated triangulated category $SH^{eff}(k)$, we conclude
that $\theta$ is an isomorphism in $SH(k)$.
\end{proof}

The following result gives an explicit model for the non-negative
part of the slice tower of the bispetrum $KGL$.

\begin{thm}\label{glavnaya}
Let $k$ be a perfect field. Then the tower~\eqref{neploho} of
bispectra in $SH(k)$
    $$\cdots\to\Sigma_s^{q+1}\Sigma^{q+1}_{\bb G}A_{\spec k}\to\Sigma_s^q\Sigma^q_{\bb G}A_{\spec k}\to\cdots\to A_{\spec k}$$
is isomorphic to the tower
    $$\cdots\to f_{q+1}(KGL)\to f_q(KGL)\to\cdots\to f_0(KGL).$$
\end{thm}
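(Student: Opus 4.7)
My plan is to exhibit each $\Sigma_s^q \Sigma^q_{\bb G} A_{\spec k}$ as $f_q$ applied to $A_{\spec k}$, via the universal property of $f_q$, and then to use Lemma~\ref{birmingh} to pass from $f_q(A_{\spec k})$ to $f_q(KGL)$. Lemma~\ref{birmingh} supplies the base case $A_{\spec k} \cong f_0(KGL)$. The goal is to verify that the tower map $\pi_q\colon\Sigma_s^q \Sigma^q_{\bb G} A_{\spec k} \to A_{\spec k}$ of~\eqref{neploho} satisfies the two characterising properties of $f_q(A_{\spec k}) \to A_{\spec k}$: (i) source in $\Sigma^q_{\bb G} SH^{eff}(k)$, and (ii) cofiber in the right orthogonal $\Sigma^q_{\bb G} SH^{eff}(k)^\perp$.

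Property (i) is essentially immediate: Corollaries~\ref{abvgd} and~\ref{efgh} verify the hypotheses of Lemma~\ref{nesv}, whence Lemma~\ref{babah} places $A_{\spec k}$ and $A_{0,\spec k}$ in $SH^{eff}(k)$; since $\Sigma^q_{\bb G} SH^{eff}(k)$ is closed under $\Sigma_s$, the source lies there. For property (ii), the cofiber $C_q$ of $\pi_q$ is an iterated extension of the Grayson cones $\Sigma_s^i \Sigma^i_{\bb G} A_{0,\spec k}$ for $0 \leq i < q$. Because $\Sigma_s$ and $\Sigma_{\bb G}$ are invertible in $SH(k)$ and $\Sigma^q_{\bb G} SH^{eff}(k) \subset \Sigma^{i+1}_{\bb G} SH^{eff}(k)$ for $i < q$, the required $\Hom$-vanishing on each graded piece collapses to the single clean assertion $f_1(A_{0,\spec k}) = 0$. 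Granted this, $\pi_q$ realises $f_q(A_{\spec k})$, and applying $f_q$ to $A_{\spec k} \xrightarrow{\sim} f_0(KGL) \to KGL$ identifies $f_q(A_{\spec k}) \cong f_q(KGL)$ (using $f_q(f_0(E)) = f_q(E)$). Composing produces isomorphisms $\eta_q\colon\Sigma_s^q \Sigma^q_{\bb G} A_{\spec k} \xrightarrow{\sim} f_q(KGL)$, and the uniqueness of the universal factorisations forces the $\eta_\bullet$ to intertwine the tower maps on both sides.

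The main obstacle is the vanishing $f_1(A_{0,\spec k}) = 0$, which is precisely where the Voevodsky problem for the slices of $KGL$ is resolved in this approach. I would deduce it from the computation already appearing in the proof of Theorem~\ref{uhuhuh}: $\Omega_{\bb G}(\wt A^\Delta_{0,\spec k})_0 \simeq 0$ in $\Ho(Pre^\Sigma(Sm/k))$, which follows in turn from Corollary~\ref{nashe} and Voevodsky's vanishing $H^{p,0}_{\cc M}(X \wedge \bb G_m, \bb Z) = 0$. By the bispectrum adjunction, for every $E \in SH_{S^1}(k)$,
$$\Hom_{SH(k)}\bigl(\Sigma_{\bb G}\Sigma^\infty_{\bb G} E,\, A_{0,\spec k}\bigr) \cong \Hom_{SH_{S^1}(k)}\bigl(E,\, \Omega_{\bb G}(\wt A^\Delta_{0,\spec k})_0\bigr) = 0.$$
Since the objects $\Sigma^\infty_{\bb G} E$ generate $SH^{eff}(k)$ as a localising subcategory, this propagates to $\Sigma_{\bb G} SH^{eff}(k) \subset A_{0,\spec k}^\perp$, that is, $f_1(A_{0,\spec k}) = 0$, completing the argument.
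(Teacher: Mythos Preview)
Your argument is correct and rests on the same key vanishing as the paper's proof, namely $\Omega_{\bb G}(\wt A^\Delta_{0,\spec k})_0\simeq 0$ (equivalently $f_1(A_{0,\spec k})=0$, coming from $H^{p,0}_{\cc M}(X\wedge\bb G_m,\bb Z)=0$), but the organisation is different. You characterise the Grayson tower of $A_{\spec k}$ intrinsically as its slice tower by checking the universal property of $f_q$ directly---source in $\Sigma^q_{\bb G}SH^{eff}(k)$, cofiber in the right orthogonal---and only afterwards transfer to $KGL$ via $f_q(A_{\spec k})\cong f_q(f_0(KGL))\cong f_q(KGL)$. The paper instead runs an induction on $q$ directly against $KGL$: assuming $\theta_q\colon\Sigma_s^q\Sigma_{\bb G}^q A_{\spec k}\cong f_q(KGL)$, it lifts the Grayson map uniquely along $f_{q+1}(KGL)\to f_q(KGL)$ (using orthogonality of $s_q(KGL)$), and then proves the lift $\theta_{q+1}$ is an isomorphism by applying $f_{q+1}$ to the resulting square of triangles and invoking the same vanishing for the cone $\Sigma_s^q\Sigma_{\bb G}^qA_{0,\spec k}$. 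Your route is arguably cleaner in that it separates the statement ``Grayson tower $=$ slice tower of $A_{\spec k}$'' from the identification with $KGL$, and it isolates the single input $f_1(A_{0,\spec k})=0$; the paper's inductive construction has the advantage that compatibility of the $\theta_q$ with the tower maps is automatic by construction, whereas you must (correctly) appeal to uniqueness of factorisations through $f_q$.
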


\begin{proof}
By Lemma~\ref{birmingh} there is an isomorphism $\theta:A_{\spec
k}\to f_0(KGL)$ in $SH(k)$. Suppose an isomorphism
$\theta_q:\Sigma_s^q\Sigma^q_{\bb G}A_{\spec k}\cong f_q(KGL)$,
$q\geq 0$, is constructed. Since $\Sigma_s^{q+1}\Sigma^{q+1}_{\bb
G}A_{\spec k}\in\Sigma^{q+1}_{\bb G}SH(k)$ and $s_q(KGL)$ is
orthogonal to $\Sigma^{q+1}_{\bb G}SH(k)$, it follows that there is
a unique morphism
   $$\theta_{q+1}:\Sigma_s^{q+1}\Sigma^{q+1}_{\bb G}A_{\spec k}\to f_{q+1}(KGL)$$
making the diagram
   $$\xymatrix{\Sigma_s^{q+1}\Sigma^{q+1}_{\bb G}A_{\spec k}\ar[r]\ar[d]_{\theta_{q+1}}&\Sigma_s^q\Sigma^q_{\bb G}A_{\spec k}\ar[d]^{\theta_q}\\
               f_{q+1}(KGL)\ar[r]&f_q(KGL)}$$
commutative. We claim that $\theta_{q+1}$ is an isomorphism in
$SH(k)$.

By Theorem~\ref{horosho} and Corollary~\ref{abvgd} a homotopy
cofiber of the upper horizontal arrow is $\Sigma_s^q\Sigma^q_{\bb
G}A_{0,\spec k}$. Therefore,
   $$SH(k)(\Sigma^{q+1}_{\bb G}\Sigma^\infty_{\bb G}\Sigma_s^\infty X_+,\Sigma_s^{p}\Sigma^q_{\bb G}A_{0,\spec k})=
     SH(k)(\Sigma_{\bb G}\Sigma^\infty_{\bb G}\Sigma_s^\infty X_+,\Sigma_s^{p}A_{0,\spec k})\cong H_{\bb K}^{p,0}(X\wedge\bb G_m,\bb Z)$$
for any $X\in Sm/k$ and integer $p$. The proof of
Theorem~\ref{uhuhuh} shows that $H_{\bb K}^{p,0}(X\wedge\bb G_m,\bb
Z)=0$, and hence $f_{q+1}(\Sigma_s^{q}\Sigma^q_{\bb G}A_{0,\spec
k}))=0$.

Since $f_{q+1}(s_q(KGL))=0$, we see that the horizontal arrows of
the commutative diagram
      $$\xymatrix{f_{q+1}(\Sigma_s^{q+1}\Sigma^{q+1}_{\bb G}A_{\spec k})\ar[r]\ar[d]_{f_{q+1}(\theta_{q+1})}
               &f_{q+1}(\Sigma_s^q\Sigma^q_{\bb G}A_{\spec k})\ar[d]^{f_{q+1}(\theta_q)}\\
               f_{q+1}(f_{q+1}(KGL))\ar[r]&f_{q+1}(f_q(KGL))}$$
are isomorphisms. But $f_{q+1}(\theta_q)$ is an isomorphism, and
hence so is $f_{q+1}(\theta_{q+1})$. Lemma~\ref{birmingh} implies
$\Sigma_s^{q+1}\Sigma^{q+1}_{\bb G}A_{\spec k}$ is in
$\Sigma^{q+1}_{\bb G}SH(k)$. Since $f_{q+1}(KGL)$ belongs to
$\Sigma^{q+1}_{\bb G}SH(k)$ as well and $f_{q+1}(\theta_{q+1})$ is
an isomorphism, we conclude that $\theta_{q+1}$ is an isomorphism.
\end{proof}

One of the equivalent models for the motivic Eilenberg--Mac~Lane
bispectrum $H_{\bb Z}$ is as follows. Let $Cor$ be the ringoid of
finite correspondences over $Sm/k$ (see, e.g.,~\cite{SV1}). The cube
of sheaves $Cor(-,\bb G_m^{\wedge n})$ is defined similar to the
cube $K_0^{Gr}(\cc A(-,Y\times\bb G_m^{\wedge n}))$. Its vertices
are sheaves $Cor(-,\bb G_m^{\times k})$, $k\leq n$. By definition,
   $$H_{\bb Z}=(EM(Cor(-,\spec k)),EM(C^\oplus Cor(-,\bb G_m^{\wedge 1})),\ldots),$$
where $EM$ stands for the Eilenberg--Mac~Lane functor in the sense
of~\cite[Appendix~A]{GP} from abelian groups to $Sp^\Sigma$.

The composite map of ringoids
   $$\bb K_0\xrightarrow{F}K_0^\oplus\to Cor$$
(see~\cite{Sus,Wlk} for the definition of the second arrow) yields a
map of bispectra
   $$\lambda:A_{0,\spec k}\to H_{\bb Z}.$$
The proof of Theorem~\ref{horosho} and
Corollaries~\ref{nashe}-\ref{abvgd} shows that $\lambda$ is an
isomorphism in $SH(k)$.

The next result was first conjectured by
Voevodsky~\cite{VoeProbl,VoeAppr} and solved by Levine~\cite{Levcon}
by using the coniveau tower (over perfect fields).

\begin{thm}\label{voevslices}
Let $k$ be a perfect field. Then for every $q\geq 0$ we have
isomorphisms in $SH(k)$
   $$s_q(KGL)\cong\Sigma_s^q\Sigma_{\bb G}^q H_{\bb Z}.$$
\end{thm}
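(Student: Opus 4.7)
The plan is to assemble this as a direct corollary of the preceding machinery, since the hard work has already been done in Theorems~\ref{horosho} and~\ref{glavnaya}. Fix $q \geq 0$. By Theorem~\ref{glavnaya}, there is a commutative square in $SH(k)$
\[
\xymatrix{\Sigma_s^{q+1}\Sigma_{\bb G}^{q+1} A_{\spec k} \ar[r] \ar[d]_{\theta_{q+1}}^\cong & \Sigma_s^q \Sigma_{\bb G}^q A_{\spec k} \ar[d]^{\theta_q}_\cong \\ f_{q+1}(KGL) \ar[r] & f_q(KGL)}
\]
in which the vertical arrows are isomorphisms. Taking cones horizontally, one obtains an induced isomorphism between the cones of the two rows.

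The cone of the lower row is $s_q(KGL)$ by definition of the slice. For the upper row, Theorem~\ref{horosho} combined with Corollary~\ref{abvgd} (homotopy invariance and cancelation for $H^{*,*}_{\bb K}$) identifies the cone with $\Sigma_s^q \Sigma_{\bb G}^q A_{0,\spec k}$. So the first step gives a canonical isomorphism
\[
s_q(KGL) \cong \Sigma_s^q \Sigma_{\bb G}^q A_{0,\spec k}
\]
in $SH(k)$.

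To finish, I will use the map of bispectra $\lambda \colon A_{0,\spec k} \to H_{\bb Z}$ constructed just before the theorem from the composition $\bb K_0 \xrightarrow{F} K_0^\oplus \to Cor$. As noted in the paragraph preceding the theorem, the proof of Theorem~\ref{horosho} together with Corollaries~\ref{nashe} and~\ref{abvgd} shows that $\lambda$ is an isomorphism in $SH(k)$: indeed Corollary~\ref{nashe} identifies $\bb Z^{\bb K}(n)$ with the Grayson complex $\bb Z^{Gr}(n)$, and Suslin's Theorem~\ref{suslin} then identifies this with the Suslin--Voevodsky complex $\bb Z(n)$, which is exactly what computes $H_{\bb Z}$ in the relevant weight. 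Applying $\Sigma_s^q \Sigma_{\bb G}^q$ to $\lambda$ yields the desired isomorphism $s_q(KGL) \cong \Sigma_s^q \Sigma_{\bb G}^q H_{\bb Z}$.

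There is no genuine obstacle at this stage; the whole content has been pushed into the earlier results. The only point requiring a moment of care is the first step, where one must know that the isomorphism of towers provided by Theorem~\ref{glavnaya} is genuinely a commutative diagram in $SH(k)$ so that cones can be compared---but this is exactly how $\theta_{q+1}$ was inductively produced in the proof of that theorem, from the universal property of $f_{q+1}(KGL)$ relative to $\Sigma^{q+1}_{\bb G} SH(k)$.
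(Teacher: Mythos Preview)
Your proposal is correct and follows essentially the same route as the paper: use the commutative square of Theorem~\ref{glavnaya} to compare cones, identify the upper cone as $\Sigma_s^q\Sigma_{\bb G}^q A_{0,\spec k}$ via Theorem~\ref{horosho} and Corollary~\ref{abvgd}, and then invoke the isomorphism $\lambda:A_{0,\spec k}\to H_{\bb Z}$. The only cosmetic discrepancy is that the paper says ``there exists an isomorphism'' rather than ``a canonical isomorphism'', reflecting the non-functoriality of cones in a triangulated category; since the theorem only asserts existence, this does not affect the argument.
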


\begin{proof}
The proof of Theorem~\ref{glavnaya} shows that there is a
commutative diagram in $SH(k)$
   $$\xymatrix{\Sigma_s^{q+1}\Sigma^{q+1}_{\bb G}A_{\spec k}\ar[r]\ar[d]_{\theta_{q+1}}&\Sigma_s^q\Sigma^q_{\bb G}A_{\spec k}\ar[d]^{\theta_q}
               \ar[r]&\Sigma_s^q\Sigma^q_{\bb G}A_{0,\spec k}\ar[r]&\Sigma_s^{q+2}\Sigma^{q+1}_{\bb G}A_{\spec k}\ar[d]^{\Sigma_s\theta_{q+1}}\\
               f_{q+1}(KGL)\ar[r]&f_q(KGL)\ar[r]&s_q(KGL)\ar[r]&\Sigma_sf_{q+1}(KGL),}$$
where the vertical arrows are isomorphisms. Since $SH(k)$ is
triangulated, then there exists an isomorphism
   $$\Sigma_s^q\Sigma^q_{\bb G}A_{0,\spec k}\cong s_q(KGL).$$
It remains to observe that $\lambda:A_{0,\spec k}\to H_{\bb Z}$
induces an isomorphism $\Sigma_s^q\Sigma^q_{\bb G}A_{0,\spec
k}\cong\Sigma_s^q\Sigma_{\bb G}^q H_{\bb Z}$ in $SH(k)$.
\end{proof}

Let $\tilde{\cc P}(\bb G_m^{\times q})(X)$ be the additive category
whose objects are the tuples $(P,\theta_1,\ldots,\theta_q)$ with
$P\in\tilde{\cc P}(X)$ and $(\theta_1,\ldots,\theta_q)$ commuting
automorphisms. The cube of affine schemes $\bb G_m^{\wedge q}$ gives
rise to a cube of additive categories $\tilde{\cc P}(\bb G_m^{\wedge
q})(X)$ with vertices being $\tilde{\cc P}(\bb G_m^{\times q})(X)$,
$0\leq k\leq q$. The edges of the cube are given by the additive
functors $i_s:\tilde{\cc P}(\bb G_m^{\times k-1})(X)\to\tilde{\cc
P}(\bb G_m^{\times k})(X)$
   $$(P,(\theta_1,\ldots,\theta_{k-1}))\longmapsto(P,(\theta_1,\ldots,1,\ldots,\theta_{k-1})),$$
where 1 is the $s$th coordinate.

Grayson's machinery~\cite{Gr} (see~\cite{Wlk,GP} as well) produces a
tower in $\Ho(Pre^\Sigma_{nis}(Sm/k))$
   \begin{equation}\label{grdan}
    \cdots\to\Sigma^{q}_s|K^{Gr}(C^\oplus\tilde{\cc P}(\bb G_m^{\wedge q})(-\times\Delta^.))|\to\cdots\to |K^{Gr}(\tilde{\cc P}(-\times\Delta^.))|.
   \end{equation}
By~\cite[10.5]{Gr} $|K^{Gr}(\tilde{\cc
P}(-\times\Delta^.))|=|K(\tilde{\cc P}(-\times\Delta^.))|$. This
tower produces the {\it Grayson motivic spectral sequence\/} for
$\tilde{\cc P}(X)$ (see~\cite{Gr,Sus,Wlk,GP})
   \begin{equation}\label{graysonmss}
    E_2^{pq}=H^{p-q}_{\nis}(X,\bb Z^{Gr}(-q))\Longrightarrow K_{-p-q}(X),\quad X\in Sm/k.
   \end{equation}
In view of Theorem~\ref{suslin} it takes the form
   $$E_2^{pq}=H^{p-q,-q}_{\cc M}(X,\bb Z)\Longrightarrow K_{-p-q}(X),\quad X\in Sm/k.$$

We are now in a position to prove the following

\begin{thm}\label{ochenhorosho}
Let $k$ be a perfect field. Then the Grayson motivic spectral
sequence~\eqref{graysonmss} is isomorphic to the Voevodsky motivic
spectral sequence~\cite[p.~171]{DLORV}
   $$E_2^{pq}=SH(k)(\Sigma^\infty_{\bb G}\Sigma_s^{\infty}X_+,\Sigma_s^{p-q}\Sigma^q_{\bb G}s_0(KGL))\Longrightarrow K_{-p-q}(X),$$
produced by the slice tower for the bispectrum $KGL$.
\end{thm}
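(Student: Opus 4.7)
The plan is to realise both spectral sequences as arising from a single tower in $SH(k)$, namely the tower produced by Theorem~\ref{horosho} applied to the bivariant additive category $\cc A$ with $Y=\spec k$. First I would check that the hypotheses of Theorem~\ref{horosho} are satisfied: by Corollary~\ref{abvgd} the bigraded presheaves $H^{*,*}_{\bb K}(-,\spec k)$ are homotopy invariant in the first argument and satisfy the cancelation property. Theorem~\ref{horosho} then yields the tower
$$\cdots\to\Sigma_s^{q+1}\Sigma_{\bb G}^{q+1}A_{\spec k}\to\Sigma_s^q\Sigma_{\bb G}^q A_{\spec k}\to\cdots\to A_{\spec k}$$
in $SH(k)$, with successive cones $\Sigma_s^q\Sigma_{\bb G}^q A_{0,\spec k}$, together with a strongly convergent spectral sequence $E^2_{pq}=SH(k)(X_+,\Sigma_s^{-p}\Sigma_{\bb G}^qA_{0,\spec k})\Rightarrow SH(k)(X_+,\Sigma_s^{-p-q}A_{\spec k})$ that is isomorphic to the Grayson spectral sequence of Theorem~\ref{grspektr} for $\cc A$. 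Combining Corollary~\ref{nashe} (which identifies $H^{p,q}_{\bb K}(X,\bb Z)$ with motivic cohomology $H^{p,q}_{\cc M}(X,\bb Z)$) with Proposition~\ref{kukuku} (which identifies $K^{\cc A}_*(X,\spec k)$ with Quillen $K$-theory $K_*(X)$), this spectral sequence is the Grayson motivic spectral sequence~\eqref{graysonmss}.

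Next I would identify the above tower with the non-negative part of the slice tower of $KGL$. This is precisely the content of Theorem~\ref{glavnaya}: the tower $\Sigma_s^q\Sigma_{\bb G}^q A_{\spec k}$ is isomorphic in $SH(k)$ to $f_q(KGL)$, compatibly with the structure maps. Under this isomorphism, the cofibres $\Sigma_s^q\Sigma_{\bb G}^q A_{0,\spec k}$ correspond to the slices $s_q(KGL)$, and Theorem~\ref{voevslices} further identifies $s_q(KGL)\cong\Sigma_s^q\Sigma_{\bb G}^q H_{\bb Z}\cong\Sigma_s^q\Sigma_{\bb G}^q s_0(KGL)$. Thus the spectral sequence coming from the tower of Theorem~\ref{horosho} is, viewed through the slice identification, the Voevodsky spectral sequence produced by the slice filtration of $KGL$ with $E_2$-page $SH(k)(\Sigma^\infty_{\bb G}\Sigma_s^\infty X_+,\Sigma_s^{p-q}\Sigma_{\bb G}^q s_0(KGL))$.

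Matching of abutments is immediate from the ingredients we have assembled: $[X_+,\Sigma_s^{-p-q}A_{\spec k}]\cong K^{\cc A}_{p+q}(X,\spec k)\cong K_{p+q}(X)$ by Proposition~\ref{kukuku}, and on the other side $[X_+,\Sigma_s^{-p-q}KGL]\cong K_{p+q}(X)$ by Theorem~\ref{uhuhuh}; these agree through the map $A_{\spec k}\cong f_0(KGL)\to KGL$ provided by Lemma~\ref{birmingh}, which induces isomorphisms on $SH(k)(\Sigma^\infty_{\bb G}\Sigma^\infty_s X_+,\Sigma_s^{*}(-))$ by the universal property of $f_0$. Since an isomorphism of towers in a triangulated category induces an isomorphism between the associated exact-couple spectral sequences, the two spectral sequences coincide.

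The step I expect to require the most care is the bookkeeping of bidegrees. The Grayson spectral sequence is naturally indexed in Bloch--Lichtenbaum form $E_2^{pq}=H^{p-q,-q}_{\cc M}(X,\bb Z)$, whereas the Voevodsky form presents the $E_2$-page as $SH(k)(\Sigma^\infty_{\bb G}\Sigma^\infty_s X_+,\Sigma_s^{p-q}\Sigma_{\bb G}^q s_0(KGL))$. Verifying that these index conventions match under $A_{0,\spec k}\cong H_{\bb Z}\cong s_0(KGL)$ and the suspensions $\Sigma_s^q\Sigma_{\bb G}^q$ arising in the tower~\eqref{russa} is mostly a matter of tracing through the definitions in Theorem~\ref{horosho} and Remark~\ref{isp}, but no new ingredient beyond Theorems~\ref{horosho}, \ref{glavnaya}, \ref{voevslices} and the identifications of Corollaries~\ref{nashe}, \ref{abvgd} and Proposition~\ref{kukuku} is required.
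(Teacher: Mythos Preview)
Your proposal is correct and follows essentially the same route as the paper: verify the hypotheses of Theorem~\ref{horosho} via Corollary~\ref{abvgd}, then invoke Theorems~\ref{glavnaya} and~\ref{voevslices} to identify the resulting tower with the slice tower of $KGL$, and use Corollary~\ref{nashe} and Proposition~\ref{kukuku} to match with the classical Grayson spectral sequence. One point where the paper is slightly more careful than your write-up: identifying the Grayson spectral sequence~\eqref{grspseq} for $\cc A$ with the Grayson motivic spectral sequence~\eqref{graysonmss} for $\tilde{\cc P}$ requires more than matching $E_2$-pages and abutments---the paper does this by observing that the functors $G_{q,X}:C^\oplus\cc A(X,\spec k)(\bb G_m^{\wedge q})\to C^\oplus\tilde{\cc P}(\bb G_m^{\wedge q})(X)$ (equivalences for affine $X$) give an isomorphism of the towers~\eqref{polez} and~\eqref{grdan} in $\Ho(Pre^\Sigma_{nis}(Sm/k))$, hence of the associated exact couples; you should make this tower-level comparison explicit rather than inferring it from $E_2$ and abutment alone.
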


\begin{proof}
Recall that there is an additive functor $G_X:\cc A(X,\spec
k)\to\tilde{\cc P}(X)$, functorial in $X$, which is an equivalence
whenever $X$ is affine. It induces a map of multisimplicial additive
categories
   $$G_{q,X}:C^\oplus\cc A(X,\spec k)(\bb G_m^{\wedge q})\to C^\oplus\tilde{\cc P}(\bb G_m^{\wedge q})(X),$$
which is an equivalence whenever $X$ is affine. In view of
$(\ff{Aut})$-property for $\cc A$, we can identify $\cc A(X,\spec
k)(\bb G_m^{\wedge q})$ with $\cc A(X,\spec k\times\bb G_m^{\wedge
q})$.

It follows that Grayson's tower~\eqref{grdan} for $\tilde{\cc P}(X)$
is isomorphic in $\Ho(Pre^\Sigma_{nis}(Sm/k))$ to Grayson's
tower~\eqref{polez} for $\cc A(X,\spec k)$. Corollary~\ref{nashe}
and Proposition~\ref{kukuku} imply that Grayson's motivic spectral
sequence~\eqref{graysonmss} is isomorphic to Grayson's motivic
spectral sequence~\eqref{grspseq} for $\cc A$
   $$E_2^{pq}=H^{p-q,-q}_{\bb K}(X,\bb Z)\Longrightarrow K_{-p-q}^{\cc A}(X,\spec k),\quad X\in Sm/k.$$
Theorems~\ref{horosho}, \ref{glavnaya}, \ref{voevslices} now finish
the proof.
\end{proof}

\appendix\section{Some facts on spectra}

We prove here a couple of useful facts. First we wish to compare the
agreement of the bispectrum $KGL$ with the classical $K$-theory $\bb
P^1$-spectrum $BGL$ (see, e.g., \cite{VoeICM,MV,PPO}).

The functor $diag:SH_{S^1,\bb G}(k)\to SH_{S^1\wedge\bb G}(k)$
sending a bispectrum to its diagonal $S^1\wedge\bb G$-spectrum is an
equivalence of categories. In particular, $diag(KGL)$ is isomorphic
to the following $S^1\wedge\bb G$-spectrum:\footnotesize
   $$KGL_1=(\wh K^{Gr}(S^{-1}S\cc A(-,\spec k))_f^{(0)},\Omega\wh K^{Gr}((S^{-1}S)^2\cc A(-,\spec k))_f^{(1)},\Omega^2\wh K^{Gr}((S^{-1}S)^3\cc A(-,\spec k))_f^{(2)},\ldots),$$
\normalsize where $f$ refers to motivic fibrant replacement with
respect to the injective model structure of motivic spaces
(see~\cite{Jar2}) and the superscript ${}^{(n)}$ refers to the $n$th
space of the $S^1$-spectrum $\Omega^n\wh K^{Gr}((S^{-1}S)^{n+1}\cc
A(-,\spec k))$. Let $\cc K$ be a motivic fibrant replacement of the
$K$-theory presheaf $U\mapsto K(\tilde{\cc P}(U))$. Then we can
choose homotopy equivalences
   $$\alpha_n:\Omega^n\wh K^{Gr}((S^{-1}S)^{n+1}\cc A(-,\spec k))_f^{(n)}\to\cc K,\quad n\geq 0,$$
and maps $\beta_n:\cc K\wedge(S^1\wedge\bb G)\to\cc K$ which are
defined as
   \begin{gather*}
    \cc K\wedge(S^1\wedge\bb G)\xrightarrow{\gamma_n\wedge 1}\Omega^n\wh K^{Gr}((S^{-1}S)^{n+1}\cc A(-,\spec k))_f^{(n)}\wedge(S^1\wedge\bb G)\to\\
    \Omega^{n+1}\wh K^{Gr}((S^{-1}S)^{n+2}\cc A(-,\spec k))_f^{(n+1)}\xrightarrow{\alpha_{n+1}}\cc K
   \end{gather*}
with $\gamma_n$ homotopy inverse of $\alpha_n$. We then get a
$S^1\wedge\bb G$-spectrum
   $$KGL_2=(\cc K,\cc K,\cc K,\ldots)$$
with structure maps given by $\beta_n$-s.

It follows from~\cite[6.3]{Riou} that $KGL_1$ and $KGL_2$ are
isomorphic in $SH_{S^1\wedge\bb G}(k)$. By the same result
and~\cite[1.1.2]{Riou1} $KGL_2$ is isomorphic in $SH_{S^1\wedge\bb
G}(k)$ to the spectrum
   $$KGL_3=(\cc K,\cc K,\cc K,\ldots)$$
with each structure map given by $\beta_0$.

There is a zigzag of motivic equivalences
   $$S^1\wedge\bb G\xleftarrow{\sim}_{\bb A^1}\wt T\xrightarrow{\sim}_{\bb A^1}T\xleftarrow{\sim}_{\bb A^1}\bb P^{ 1},$$
where $\wt T$ is the mapping cylinder for the inclusion $(\bb
G_m)_+\to\bb A^1_+$. By~\cite[2.13]{Jar2} the zigzag induces an
equivalence of categories
   $$\theta:SH_{S^1\wedge\bb G}(k)\to SH_{\bb P^1}(k).$$
Consider a $\bb P^1$-spectrum
   $$KGL_4=(\cc K,\cc K,\cc K,\ldots),$$
where each structure map $\cc K\to\Omega_{\bb P^1}\cc K$ is given by
   $$\cc K\to\Omega_{S^1\wedge\bb G}\cc K\lra{\sim}\Omega_{\bb P^1}\cc K.$$
Here the left arrow is adjoint to $\beta_0$ and the right arrow is a
chosen homotopy equivalence induced by the zigzag above (recall that
$\cc K$ is a motivically fibrant space).

It follows from~\cite[6.3]{Riou} that $\theta(KGL_3)$ is isomorphic
to $KGL_4$ in $SH_{\bb P^1}(k)$. It remains to
apply~\cite[1.1.2]{Riou1} to show that $KGL_4$ is isomorphic in
$SH_{\bb P^1}(k)$ to the $\bb P^1$-spectrum $BGL$ defined
in~\cite[1.2.1]{PPO}.

We document the above arguments as follows.

\begin{thm}\label{kglagree}
The image of the bispectrum $KGL$ under the equivalence of
triangulated categories $\theta\circ diag:SH_{S^1,\bb G}(k)\to
SH_{\bb P^1}(k)$ is isomorphic to the $K$-theory $\bb P^1$-spectrum
$BGL$ in the sense of~\cite[1.2.1]{PPO}.
\end{thm}

Although the authors have not found the following result in the
literature, they do not have pretensions to originality. It is used
in the proof of Lemma~\ref{nesv}.

\begin{prop}\label{gmloops}
If $E$ is a connected motivically fibrant $S^1$-spectrum, then so is
$\Omega_{\bb G}E$.
\end{prop}

\begin{proof}
Clearly, $\Omega_{\bb G}E$ is motivically fibrant. To prove that it
is connected, it suffices to check that for any smooth local
Henselian scheme $U$ one has $\pi_{n<0}(E(\bb G_{m,U}))=0$ (recall
that $\bb G$ is sectionwise equivalent to the pointed motivic space
$(\bb G_m,1)$). Since $\Omega_{\bb G_m}E$ is motivically fibrant,
by~\cite[6.1.6]{Mor} it is enough to verify that for any $k$-smooth
variety $X$ one has $\pi_{n<0}(E(\bb G_{m,K}))=0$ with $K=k(X)$ its
function field.

\begin{sublem}
If $\cc F$ is a strictly homotopy invariant Nisnevich sheaf of
Abelian groups on $Sm/k$, then $H^n_{\nis}(\bb G_{m,k(X)},\cc F)=0$
for all $n>0$ and $X\in Sm/k$.
\end{sublem}

\begin{proof}
The result is well-known for $n>1$. One has,
   \begin{gather*}
    H^1_{\nis}(\bb G_{m,k(X)},\cc F)\bl{(1)}=H^2_{\nis}(S^1\wedge\bb G_{m,k(X)},\cc F)\bl{(2)}=[S^1\wedge\bb G_{m,k(X)},K(\cc
    F,2)]_{H_{\bb A^1}(k)}\bl{(3)}=\\ =[\bb P^1_{k(X)},K(\cc
    F,2)]_{H_{\bb A^1}(k)}\bl{(4)}=H^2_{\nis}(\bb P^1,\cc
    F)\bl{(5)}=0.
   \end{gather*}
Here $(1)$ is given by the suspension isomorphism, $(2)$ holds
because $K(\cc F,2)$ is an $\bb A^1$-local motivic space,  $(3)$
holds because $S^1\wedge\bb G_{m,k(X)}\cong\bb P^1_{k(X)}$ in
$H_{\bb A^1}(k)$, $(4)$ holds because $K(\cc F,2)$ is an $\bb
A^1$-local motivic space, and finally $(5)$ follows from the fact
that $\dim\bb P^1=1<2$.
\end{proof}

Now the spectral sequence
   $$H^p(\bb G_{m,k(X)},\underline{\pi}_q(E))\Longrightarrow\pi_{q-p}(E(\bb G_{m,k(X)}))$$
together with the sublemma above show that $H^0(\bb
G_{m,k(X)},\underline{\pi}_q(E))=\pi_q(E(\bb G_{m,k(X)}))=0$ for
$q<0$, because $\underline{\pi}_{q<0}(E)=0$.
\end{proof}

To conclude the paper, we remark that all presheaves of symmetric
spectra forming the main bispectra $A_{\spec k},A_{0,\spec k}$ we
work with are $\bb K$-modules in the sense of~\cite{GP1}. Moreover,
their structure maps are $\bb K$-module morphisms. Also, Grayson's
tower~\eqref{polez} for $\cc A$ is in fact a tower in the homotopy
category $\Ho(\Mod\bb K)$ of $\bb K$-modules. It produces a tower of
compact objects in the motivic homotopy category of $\bb K$-modules
in the sense of~\cite{GP}. This point of view to the motivic
spectral sequence motivated the authors to develop the ``enriched
motivic homotopy theory" of spectral categories and modules over
them~\cite{GP,GP1}. As an application, the motivic spectral sequence
is realized in associated triangulated categories. Though we tried
to avoid the use of this language here, it is this theory that led
the authors to the main results of this paper.

\end{document}